\newcommand\eps{\varepsilon}
\newcommand\R{{\mathbf{R}}}
\newcommand\Z{{\mathbf{Z}}}
\renewcommand\S{{\mathcal{S}}}
\newcommand\E{{\mathcal{E}}}
\newcommand\M{{\mathcal{M}}}
\renewcommand\L{{\mathcal{L}}}
\theoremstyle{plain}
  \newtheorem{theorem}[subsection]{Theorem}
  \newtheorem{conjecture}[subsection]{Conjecture}
  \newtheorem{proposition}[subsection]{Proposition}
  \newtheorem{lemma}[subsection]{Lemma}
  \newtheorem{corollary}[subsection]{Corollary}
\theoremstyle{remark}
  \newtheorem{remark}[subsection]{Remark}
  \newtheorem{example}[subsection]{Example}
\theoremstyle{definition}
\begin{document}

\title[sharp linear and bilinear restriction estimates]{Sharp Linear and Bilinear restriction
estimates for paraboloids in the cylindrically symmetric case}
\author{Shuanglin Shao}
\address{Department of Mathematics, UCLA, Los Angeles CA 90095-1555}
\email{slshao@math.ucla.edu}

\subjclass[2000]{Primary 42B10, 42B25; Secondary 35Q55} \keywords{Restriction Conjecture,
Schr\"odinger equation, Cylindrical Symmetry}

\vspace{-0.1in}
\begin{abstract}
For cylindrically symmetric functions dyadically supported on the paraboloid, we obtain a family
of sharp linear and bilinear adjoint restriction estimates. As corollaries, we first extend the
ranges of exponents for the classical \textit{linear or bilinear adjoint restriction conjectures}
for such functions and verify the \textit{linear adjoint restriction conjecture} for the
paraboloid. We also interpret the restriction estimates in terms of solutions to the Schr\"odinger
equation and establish the analogous results when the paraboloid is replaced by the lower third of
the sphere.
\end{abstract}

\maketitle

\section{Introduction}
Let $n\ge 3$ be a fixed integer and $S$ be a smooth compact non-empty subset of the paraboloid
$\{(\tau,\xi)\in \R\times \R^{n-1}:\,\tau=|\xi|^2\}$. If $0<p,q\le \infty$, the classical
\emph{linear adjoint restriction estimate} \footnote{In the notation of
\cite{Tao:2008:recent-progress-restric}, the estimate \eqref{eq:lin-restr} is denoted by
$R^{*}_S(p\to q)$ and the estimate \eqref{eq:bilin-restr} is denoted by $R^{*}_{S_1, S_2}(p\times
p\to q)$.} for the paraboloid is the \textit{a priori} estimate
\begin{equation}\label{eq:lin-restr}
\|(gd\sigma)^{\vee} \|_{L_{t,x}^q(\R\times \R^{n-1})}\le C_{p,q,n, S}\|g\|_{L^p(S,d\sigma)}
\end{equation} for all Schwartz functions $g$ on $S$, where
$$(gd\sigma)^{\vee}(t,x)=\int_{S} g(\tau, \xi)e^{i(x\cdot\xi+t\tau)}d\sigma(\xi)
=\int_{\R^{n-1}}g(|\xi|^2, \xi)e^{i(x\cdot\xi+t|\xi|^2)}d\xi$$ denotes the inverse space-time
Fourier transform of the measure $gd\sigma$, and $d\sigma$ is the canonical measure of the
paraboloid defined in Section \ref{sec:notations-statements}. By duality, the estimate
\eqref{eq:lin-restr} is equivalent to the following estimate
\begin{equation*}
\|\hat{f}\|_{L^{p'}(S, d\sigma)}\le C_{p,q,n,S} \|f\|_{L^{q'}(\R\times \R^{n-1})}
\end{equation*} for all Schwartz functions $f$, which roughly says that the Fourier transform
of an $L^{q'}(\R\times \R^{n-1})$ function can be ``meaningfully" restricted to the paraboloid
$S$. This leads to the \textit{restriction problem}, one of the central problems in harmonic
analysis, which concerns the optimal range of exponents $p$ and $q$ for which the estimate
\eqref{eq:lin-restr} should hold. It was originally proposed by Stein for the sphere
\cite{Stein:1979:problems-in-harmonic} and then extended to smooth sub-manifolds of $\R\times
\R^{n-1}$ with appropriate curvature \cite{Stein:1993} such as the paraboloid and the cone. The
restriction problem is intricately related to other outstanding problems in analysis such as the
Bochner-Riesz conjecture, the local smoothing conjecture, the Kakeya set conjecture and the Kakeya
maximal function conjecture, see e.g., \cite{Tao:2008:recent-progress-restric},
\cite{Tao:1999:Bochner-Resiez-restri}.

In this paper, we will mainly focus on the restriction estimates for the paraboloid. The
corresponding \emph{linear adjoint restriction conjecture} for the paraboloid asserts that
\begin{conjecture}\label{con:lin-restr}
The inequality \eqref{eq:lin-restr} holds with constants depending on $S$, $n$ and $p,q$ if and
only if $q>\frac {2n}{n-1}$ and $\frac {n+1}{q}\le \frac {n-1}{p'}$.
\end{conjecture}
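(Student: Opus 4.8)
The plan is to treat the two implications of the conjecture separately. The necessity (``only if'') direction is routine and rests on two examples. Testing \eqref{eq:lin-restr} on a smooth bump $g\equiv 1$ near a point of $S$ and applying stationary phase in $\xi$ (the phase $x\cdot\xi+t|\xi|^2$ has nondegenerate Hessian $2t\,I_{n-1}$) gives $|(gd\sigma)^{\vee}(t,x)|\sim(1+|t|)^{-(n-1)/2}$ on the region $|x|\lesssim|t|$, with rapid decay off it, so the left side of \eqref{eq:lin-restr} is finite precisely when $q\cdot\tfrac{n-1}{2}>n$, i.e. $q>\tfrac{2n}{n-1}$. Taking instead $g=\chi_{\kappa}$, the characteristic function of a $\delta$-cap $\kappa\subset S$, one has $\|g\|_{L^p(d\sigma)}\sim\delta^{(n-1)/p}$ while $(gd\sigma)^{\vee}$ is $\gtrsim\delta^{n-1}$ on a dual slab of dimensions $\delta^{-1}\times\cdots\times\delta^{-1}\times\delta^{-2}$; hence $\|(gd\sigma)^{\vee}\|_{L^q}\gtrsim\delta^{\,n-1-(n+1)/q}$, and letting $\delta\to 0$ forces $n-1-\tfrac{n+1}{q}\ge\tfrac{n-1}{p}$, which is the second condition.

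The ``if'' direction carries the full strength of the restriction conjecture and is open in general; the plan is therefore to establish \eqref{eq:lin-restr} under the additional hypotheses of cylindrical symmetry and dyadic frequency support, and then to pass to general cylindrically symmetric $g$ by parabolic rescaling together with Tao's $\eps$-removal lemma (which upgrades a single-scale estimate carrying an $\eps$-loss to the global estimate for every $q$ strictly above the critical exponent). The key structural reduction is that for $g=g(|\xi|)$ the extension is again cylindrically symmetric, $(gd\sigma)^{\vee}(t,x)=F(t,|x|)$, and integrating out the angular variable $\omega\in S^{n-2}$ produces a one-dimensional oscillatory integral against a Bessel kernel,
\begin{equation*}
F(t,\rho)=c_n\int_0^{\infty}g(r)\,e^{itr^2}\,(r\rho)^{-\frac{n-3}{2}}J_{\frac{n-3}{2}}(r\rho)\,r^{n-2}\,dr,
\end{equation*}
while the target norm becomes a power-weighted norm, $\|(gd\sigma)^{\vee}\|_{L^q_{t,x}}^q\sim\int_{\R}\int_0^{\infty}|F(t,\rho)|^q\,\rho^{n-2}\,d\rho\,dt$. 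The $n$-dimensional restriction estimate is thereby converted into a two-dimensional weighted estimate for an explicit oscillatory integral operator.

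Next I would insert the asymptotics of $J_{(n-3)/2}$, splitting the $r$-integral according to $r\rho\lesssim 1$, $r\rho\sim 1$, and $r\rho\gg 1$. In the oscillatory range the kernel becomes $(r\rho)^{-(n-2)/2}e^{\pm ir\rho}$ up to a lower-order tail, so $F$ is a genuine two-parameter oscillatory integral with phase $\phi(r)=tr^2\pm r\rho$; its stationary point $r_*=\mp\rho/(2t)$, the van der Corput bound $|\phi''|=2|t|$, and a dyadic decomposition in $r$, $\rho$ and $|t|$ give pointwise and $L^2$-type control of $F$ that can be summed and then interpolated against the trivial $L^1\!\to\!L^\infty$ estimate to recover the full conjectured $(p,q)$-range; the transition zone $r\rho\sim 1$ and the non-oscillatory range (where the kernel is bounded) contribute only lower-order terms. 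A cleaner alternative is to first prove the sharp \emph{bilinear} estimate $R^{*}_{S_1,S_2}(2\times2\to q)$ for separated cylindrically symmetric dyadic pieces --- where the Bessel/stationary-phase analysis benefits from the transversality of $S_1$ and $S_2$ and reaches down to the sharp value of $q$ --- and then to run the Tao--Vargas--Vega bilinear-to-linear machinery (Whitney decomposition plus $\eps$-removal), which is essentially lossless because the bilinear input is already sharp.

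The step I expect to be the main obstacle is carrying the \emph{sharp} power weight $\rho^{n-2}$ through these oscillatory-integral estimates uniformly across the transition region $r\rho\sim 1$: on the borderline $\tfrac{n+1}{q}=\tfrac{n-1}{p'}$ there is no slack, so each dyadic piece must be bounded with exactly the right powers of $r$, $\rho$ and $|t|$, and the summation over the dyadic parameters must be genuinely convergent (or borderline-divergent in a way that is absorbed by the $\eps$-removal step). A secondary point is organizing the interpolation so that the endpoint line is attained for \emph{all} admissible $p$, including small $p$, where the Knapp line rather than $q>\tfrac{2n}{n-1}$ is the binding constraint.
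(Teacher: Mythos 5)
You have correctly recognized that the statement is a genuine conjecture: the ``if'' direction is open for general $g$, and neither you nor the paper proves it as stated. What the paper actually establishes (Corollary \ref{cor:lin-restr}, second part) is the conjecture for cylindrically symmetric $g$, which is exactly the restricted goal you set yourself, and your necessity argument (stationary--phase decay of $(d\sigma)^{\vee}$ giving $q>\tfrac{2n}{n-1}$, plus the Knapp cap giving $\tfrac{n+1}{q}\le\tfrac{n-1}{p'}$) is the same standard pair of examples the paper cites.

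Within the cylindrically symmetric case your route is close to the paper's in outline --- both pass through the Fourier--Bessel reduction to a one-dimensional oscillatory integral and a dyadic decomposition in $|x|$ --- but the endgame differs in a way worth noting. You propose to reach the critical line $\tfrac{n+1}{q}=\tfrac{n-1}{p'}$ via estimates with an $R^{\eps}$ loss repaired by Tao's $\eps$-removal lemma, or alternatively via a sharp bilinear estimate plus the Tao--Vargas--Vega machinery. The paper needs neither: on each annulus $A_R$ it proves loss-free estimates on the lines $q=2$ (Plancherel) and $q=3p'$ (the Carleson--Sj\"olin/$TT^{*}$ argument with Hardy--Littlewood--Sobolev), interpolates \emph{only between these two} --- deliberately bypassing the $q=4$ line, which is the one place an $R^{\eps}$ appears and where a Whitney/bilinear decomposition is genuinely used --- and obtains on the critical line a constant $R^{\alpha(R)}$ with $\alpha(R)<0$ for $R\ge 2$ and $\alpha(R)=\tfrac{n-1}{q}>0$ for $R\le 1$. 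Summation over dyadic $R$ and over dyadic frequency blocks $M$ is then closed by Schur's test using $q>\tfrac{2n}{n-1}>p$. This is simpler and safer than $\eps$-removal, which is formulated for estimates localized to space-time balls rather than annuli and whose applicability here you would have to justify; it also sidesteps your stated worry about borderline-divergent dyadic sums, since the decay in $R$ is a genuine power. One further technical point: the paper does not merely invoke Bessel asymptotics with ``lower-order tails'' but carries the exact error term of $J_{(n-3)/2}$ as an explicit integral (Lemma \ref{lem:Four-Bess}) and bounds its contribution by $R^{-n/2+(n-1)/q}$ (Proposition \ref{prop: lin-err J_m}), which is what makes the reduction to the main term rigorous uniformly over the transition region $r\rho\sim 1$ that you flag as the main obstacle.
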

The conditions on $p$ and $q$ are known to be best possible by the decay estimates of
$(d\sigma)^{\vee}$ and the standard Knapp example, see e.g., \cite{Stein:1993},
\cite{Tao:2008:recent-progress-restric}. When $n=2$, the non-endpoint case was first proven to be
true by Fefferman and Stein \cite{Fefferman-Stein:1971:maxi-inequalities} (and generalized to
other oscillatory integrals by Carleson and Sj\"olin
\cite{Carleson-Sjolin:1972:oscill-multipl-disc}), and the endpoint case was proven to be true by
Zygmund \cite{Zygmund:1974}. When $n>2$, it was proven with the additional condition $q>\frac
{2(n+1)}{n-1}$ by Tomas \cite{Tomas:1975:restrict} using real interpolation, and $q=\frac
{2(n+1)}{n-1}$ by Stein \cite{Stein:1993} using complex interpolation. In 1977, C\'ordoba
\cite{Cordoba:1977:Kakeya-spher-sum} gave an alternate proof for $n=2$ by largely relying on the
successful resolution of the Kakeya conjecture in two dimensions. In 1991, Bourgain
\cite{Bourgain:1991:Besicovitch-maximal} generalized C\'ordoba's arguments to higher dimensions,
so that nontrivial progress on the Kakeya problem might imply some nontrivial progress on the
restriction result; using this technique, he proved estimates for some $q<\frac {2(n+1)}{n-1}$; in
particular, $q>4-\frac 2{15}$ when $n=3$. Further improvements along this line were made by Moyua,
Vargas, Vega and Wolff, see e.g., \cite{Moyua-Varg-Vega:1996:schrod-maxi},
\cite{Wolff:1995:omproved-bound-kakeya}. The current best result $q>\frac {2(n+2)}{n}$ in higher
dimensions $n\ge 3$ is due to Tao \cite{Tao:2003:paraboloid-restri}, based on the techniques in
Wolff's breakthrough paper on the cone restriction estimates \cite{Wolff:2001:restric-cone}.

Among various techniques developed to attack this problem, the bilinear method proves to be very
powerful. Variants of this idea also have applications to the nonlinear dispersive equations, see
e.g., \cite{Bourgain:1993:Schrodinger-lattice}, \cite{Bourgain:1993:KDV-lattice},
\cite{Klainerman-machedon:1993:spa-time-null-form}, etc. More precisely, we assume $S_1$ and $S_2$
to be two smooth compact non-empty subsets of the paraboloid in $\R\times \R^{n-1}$, which are
transverse in the sense that the unit normals of $S_1$ and of $S_2$ are separated by at least some
fixed angle $c>0$. Then the classical \emph{bilinear adjoint restriction conjecture} concerns the
optimal range of exponents $p$ and $q$ for which the bilinear operator, $(f,g)\to
(fd\sigma_1)^{\vee} (gd\sigma_2)^{\vee}$, should bound from $L^p\times L^p$ to $L^q$, where
$d\sigma_1, d\sigma_2$ are the canonical Lebesgue measures of $S_1$, $S_2$, respectively. The
following formulation of this conjecture is taken from
\cite{Tao-Vargas-Vega:1998:bilinear-restri-kakeya}.
\begin{conjecture}\label{con:bilin-restr}
Let $S_1$, $S_2$ be defined as above and $q\ge \frac {n}{n-1},\frac {n+2}{2q}+\frac {n}{p}\le n$
and $\frac {n+2}{2q}+\frac {n-2}{p}\le n-1$. Then there exists a constant $0<C<\infty$ depending
on $S_1, S_2, n$ and $p,q$ such that
\begin{equation}\label{eq:bilin-restr}
\|(fd\sigma_1)^{\vee} (gd\sigma_2)^{\vee}\|_{L^q_{t,x}(\R\times\R^{n-1})}\le
C\|f\|_{L^p(S_1)}\|g\|_{L^p(S_2)}
\end{equation}
for all $f\in L^p(S_1)$ and $g\in L^p(S_2)$.
\end{conjecture}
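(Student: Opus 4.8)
The plan is to reduce \eqref{eq:bilin-restr} to the extreme points of the polygon of admissible exponents by interpolation, to dispatch the elementary ones directly, and to attack the remaining ones by $L^2$ orthogonality together with an induction-on-scales argument that exploits the transversality of $S_1$ and $S_2$. The set of $(1/p,1/q)$ satisfying $q\ge\frac{n}{n-1}$, $\frac{n+2}{2q}+\frac np\le n$ and $\frac{n+2}{2q}+\frac{n-2}{p}\le n-1$ is a convex pentagon with vertices $(p,q)\in\{(\infty,\infty),\,(1,\infty),\,(2,\tfrac{n+2}{n}),\,(\tfrac{2n}{n-1},\tfrac{n}{n-1}),\,(\infty,\tfrac{n}{n-1})\}$, and since $(f,g)\mapsto(fd\sigma_1)^{\vee}(gd\sigma_2)^{\vee}$ is bilinear, bilinear interpolation reduces \eqref{eq:bilin-restr} to these five points. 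The vertices $(1,\infty)$ and $(\infty,\infty)$ are immediate, since $|(fd\sigma_j)^{\vee}|\le\|f\|_{L^1(d\sigma_j)}\lesssim\|f\|_{L^\infty(d\sigma_j)}$ (the measures $\sigma_j$ being finite). The two vertices on the line $q=\frac{n}{n-1}$ reduce, via the Cauchy--Schwarz inequality $\|(fd\sigma_1)^{\vee}(gd\sigma_2)^{\vee}\|_{L^{n/(n-1)}}\le\|(fd\sigma_1)^{\vee}\|_{L^{2n/(n-1)}}\|(gd\sigma_2)^{\vee}\|_{L^{2n/(n-1)}}$, to the \emph{linear} adjoint restriction estimate \eqref{eq:lin-restr} at the critical exponent $q=\frac{2n}{n-1}$, i.e.\ to Conjecture \ref{con:lin-restr} on its own boundary.

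The central vertex $(p,q)=(2,\frac{n+2}{n})$ is where $L^2$ orthogonality enters. By Plancherel, $\|(fd\sigma_1)^{\vee}(gd\sigma_2)^{\vee}\|_{L^2(\R\times\R^{n-1})}=\|(fd\sigma_1)*(gd\sigma_2)\|_{L^2}$; parametrizing $S_1$ and $S_2$ over $\R^{n-1}$ one sees that the level sets of the addition map $(\xi,\eta)\mapsto\xi+\eta$ are $(n-2)$-spheres whose radius is bounded below exactly by the transversality of $S_1$ and $S_2$, and a coarea and Cauchy--Schwarz computation then yields the elementary bilinear $L^2$ bound $\|(fd\sigma_1)^{\vee}(gd\sigma_2)^{\vee}\|_{L^2}\lesssim\|f\|_{L^2(d\sigma_1)}\|g\|_{L^2(d\sigma_2)}$; interpolated with the trivial endpoint this already gives \eqref{eq:bilin-restr} for $q\ge 2$. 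To descend to $q=\frac{n+2}{n}$ one instead localizes to a ball $B_R$, decomposes each $(fd\sigma_j)^{\vee}$ on $B_R$ into wave packets supported on $R^{1/2}\times\cdots\times R^{1/2}\times R$ tubes normal to the $R^{-1/2}$-caps of $S_j$, and runs the Tao--Vargas--Vega/Tao induction-on-scales argument: the transversality makes tubes of the two families essentially non-overlapping, so the bilinear $L^q(B_R)$ norm is controlled by the same bilinear quantity at the dyadically smaller scale, at the cost of a factor $R^{\eps}$; summing the dyadic scales $R=2^k$ --- a geometric series that converges precisely because $q>\frac{n+2}{n}$ --- gives \eqref{eq:bilin-restr} for every $q>\frac{n+2}{n}$.

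The main obstacle is that all three nontrivial vertices sit on scaling-critical endpoints. The induction on scales loses a factor $R^{\eps}$ that can be summed only when $q$ is \emph{strictly} larger than $\frac{n+2}{n}$; reaching $(2,\frac{n+2}{n})$ itself, or any further improvement, runs into the Kakeya maximal conjecture, open in $\R^{n}$ for $n\ge 3$; and the two vertices on $q=\frac{n}{n-1}$ require the endpoint of the linear restriction conjecture. For these reasons Conjecture \ref{con:bilin-restr} is not available in full generality. The role of the cylindrical symmetry exploited in this paper is precisely to remove this obstruction: for radial data the tube geometry in $\R^{n-1}$ collapses to an essentially one-dimensional count, which eliminates the $R^{\eps}$ loss and opens the critical endpoints to attack.
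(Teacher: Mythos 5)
The statement you were asked to prove is Conjecture \ref{con:bilin-restr}, which is an open problem: the paper does not prove it (there is no ``paper's own proof'' to compare against), and only establishes restricted versions of it for cylindrically symmetric, dyadically supported functions (Corollary \ref{cor:bilin-restr}), and even then only on a sub-region and ``except for certain endpoints.'' Your proposal is internally honest about this --- you conclude that the conjecture ``is not available in full generality'' --- so what you have written is a survey of partial progress and known obstructions, not a proof. Judged as a proof of the statement, it therefore has an irreparable gap at every nontrivial vertex of the exponent polygon, and that is the expected state of affairs.

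One step deserves a concrete correction even as a conditional reduction. You propose to handle the two vertices on the line $q=\frac{n}{n-1}$ by Cauchy--Schwarz, reducing to the linear estimate at $q=\frac{2n}{n-1}$, which you describe as ``Conjecture \ref{con:lin-restr} on its own boundary.'' But that boundary point is excluded from Conjecture \ref{con:lin-restr} for a reason: the estimate is \emph{false} there, since $(d\sigma)^{\vee}$ decays like $(1+|t|+|x|)^{-(n-1)/2}$ and hence fails (logarithmically) to lie in $L^{2n/(n-1)}(\R\times\R^{n-1})$. So Cauchy--Schwarz reduces these vertices to a false statement; the whole point of the bilinear formulation is that transversality is expected to recover exactly what Cauchy--Schwarz loses on the line $q=\frac{n}{n-1}$, and no reduction to the linear conjecture is available there. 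The identification of the five vertices, the trivial endpoints $(1,\infty)$ and $(\infty,\infty)$, the transversal bilinear $L^2$ estimate, and the attribution of $q>\frac{n+2}{n}$ to the Tao--Vargas--Vega/Tao induction on scales are all correct, and your closing remark correctly anticipates the mechanism of the paper: cylindrical symmetry collapses the tube geometry to an essentially one-dimensional problem, which is what Sections \ref{sec:linear} and \ref{sec:biliear} exploit via the Fourier--Bessel formula to prove the dyadic estimates from which Corollary \ref{cor:bilin-restr} follows.
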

It is known that the bilinear restriction conjecture \ref{con:bilin-restr} is stronger than the
linear restriction conjecture \ref{con:lin-restr}, see
\cite{Tao-Vargas-Vega:1998:bilinear-restri-kakeya}. For a discussion of recent progress made on
this problem, see \cite{Tao:2008:recent-progress-restric}. We remark that the conditions on $p$
and $q$ in this conjecture are also known to be best possible by the decay estimates of
$(d\sigma)^{\vee}$ and the variants of the standard Knapp examples such as the squashed caps and
the stretched caps, see e.g., \cite{Tao-Vargas-Vega:1998:bilinear-restri-kakeya},
\cite{Tao:2008:recent-progress-restric}.

However, none of these Knapp-type examples are cylindrically symmetric functions, i.e., functions
on $\R\times\R^{n-1}$ invariant under spatial rotations. Hence we expect that further estimates
are available if we assume that functions are cylindrically symmetric and supported on a dyadic
subset of the paraboloid in the form of $\{(\tau, \xi): M\le |\xi|\le 2M, \tau=|\xi|^2\}$ with
$M\in 2^{\Z}$. We denote by $\L_M$ this class of functions. Indeed, it is the case: when $n=3$,
the Tomas-Stein restriction estimate $L^2\to L^4$ is known to be best possible; but for functions
in $\L_M$, the estimate $L^2\to L^q$ is true for any $q>10/3$ by Corollary \ref{cor:lin-restr} in
Section \ref{sec:linear}.

Our main theorems, Theorem \ref{thm:dyadic-lin} and \ref{thm:dyadic-bilin}, of this paper are to
present a family of sharp linear adjoint restriction estimates for $f\in \L_1$, and bilinear ones
for $f\in \L_1$ and $g\in \L_M$ with $0<M\le 1/4$ on the dyadic space-time slab $\R\times \{R/2\le
|x|\le R\}$ with $R\in 2^{\Z}$. The proofs essentially combine the two classical and elementary
methods, the Carleson-Sj\"olin argument \cite{Stein:1993} and the bilinear method via the Whitney
decomposition, which effectively solved the two dimensional \emph{restriction conjecture}. In the
arguments, we heavily exploit the rotational symmetry via the ``Fourier-Bessel" formula, Lemma
\ref{lem:Four-Bess}, for cylindrically symmetric functions to reduce matters to main term
estimates by encoding the error term into certain integrals. A lot of effort is devoted to
inventing counterexamples to show that the restriction estimates are best possible by relying on
the idea coming from the standard Knapp examples, the principles of both stationary phase and
non-stationary phase \cite{Stein:1993} and the Khintchine inequality
\cite{Tao:2008:recent-progress-restric}. We remark that some of of them are quite challenging, see
e.g., Example \ref{ex:II-larger}.

As corollaries of the main theorems, we can verify the inequality \eqref{eq:lin-restr} for $\L_M$
when the exponents $p$ and $q$ are in a larger region (see Figure \ref{fig:lin}) and show that it
is nearly sharp except for certain endpoints. Furthermore, we show that the \textit{linear adjoint
restriction conjecture} \ref{con:lin-restr} holds for all cylindrically symmetric functions when
$p$ and $q$ are restricted to the classical region. By similar arguments, one can also establish
the analogous sharp restriction estimates when the paraboloid is replaced by the lower third of
the sphere $\S^{n-1}$ or more general cylindrically symmetric and compact hypersurfaces of
elliptic type as defined in \cite{Moyua-Vargas-Vega:1999},
\cite{Tao-Vargas-Vega:1998:bilinear-restri-kakeya}. As applications of the restriction estimates,
we will interpret them in terms of the solutions to the Sch\"odinger equations and present another
proof of the weighted Strichartz estimates in \cite{Vilela:2001:radial-schrod} for Schr\"odinger
equations.

$\textbf{Acknowledgements.}$ The author is very grateful to his advisor Terence Tao for
introducing this fascinating subject, and is indebted to him for many helpful conversations and
encouragement during the preparation of this paper. The author thanks Monica Visan for helpful
discussions. The author would like to thank the referee for his valuable comments and suggestions.

\section{Notations and Main Theorems}\label{sec:notations-statements}
Let $n\ge 3$ be the fixed space-time dimension. In this paper, we interpret $\R\times\R^{n-1}$ as
the space-time frequency space.

We will use the notations $X\lesssim Y$, $Y\gtrsim X$, or $X=O(Y)$ to denote the estimate $|X|\le
C Y$ for some constant $0<C<\infty$, which may depend on $p,q,n$ and $S_1$ or $S_2$, but not on
the functions. If $X\lesssim Y$ and $Y\lesssim X$ we will write $X\sim Y$. If the constant $C$
depends on a special parameter other than the above, we shall denote it explicitly by subscripts.
For example, $C_{\eps}$ should be understood as a positive constant not only depending on $p,q,n$
and $S_1$ or $S_2$, but also on $\eps$.

We denote by $d\sigma$ the canonical Lebesgue measure of the standard paraboloid $S=\{(\tau,\xi):
\tau=|\xi|^2\}$ in $\R\times \R^{n-1}$, which is the pullback of $n-1$-dimensional Lebesgue
measure $d{\xi}$ under the projection map $(\tau, \xi)\mapsto {\xi}$; thus,
\begin{equation*}
\int_{S} f(\tau, \xi) d\sigma =\int_{\R^{n-1}} f(|{\xi}|^2, {\xi})d{\xi} .
\end{equation*}
By $\S^{n-2}$ we denote the ${n-1}$ dimensional unit sphere canonically embedded in $\R^{n-1}$,
and by $d\mu$ its surface measure.

We define a dyadic number to be any number $R\in 2^{\Z}$ of the form $R=2^j$ where $j$ is an
integer. For each dyadic number $R>0$, we define the dyadic annulus in $\R^{n-1}$,
$$A_R:=\{x\in \R^{n-1}: R/2\le |x|\le R\}.$$

We define the space-time norm $L^q_tL^r_x$ of $f$ on $\R\times \R^{n-1}$ by
$$\|f\|_{L^q_tL^r_x(\R\times\R^{n-1})}=\left(\int_{\R}\left(\int_{\R^{n-1}}
|f(t,x)|^{r}d\,x\right)^{q/r}d\,t\right)^{1/q},$$ with the usual modifications when $q$ or $r$ are
equal to infinity, or when the domain $\R\times \R^{n-1}$ is replaced by a small region of
space-time such as $\R\times A_R$. When $q=r$, we abbreviate it by $L^{q}_{t, x}$. Unless
specified in this paper, all the space-time integrals are taken over $\R\times A_R$ with dyadic
$R>0$, which will be clear from the context.

We define the spatial Fourier transform of $f$ on $\R^{n-1}$ by
$$\hat{f}(\xi)=\int_{\R^{n-1}}f(x)e^{-ix\cdot\xi}dx.$$ We use $1_U$ to denote the
indicator function of the set $U$, i.e.,
\begin{equation*}1_U(x)=\begin{cases} 1, &\text{for } x\in U,\\
0, &\text{for } x\notin U.\end{cases}\end{equation*} For $1\le p\le \infty$, we denote the
conjugate exponent of $p$ by $p'$, i.e., $1/p+1/p'=1$.

We start with stating the main theorem concerning the linear restriction estimates, which is
proven in Section \ref{sec:linear}.

\begin{theorem}\label{thm:dyadic-lin}
Suppose $f\in \L_1$ and $R>0$ is a dyadic number. Then the following sharp restriction estimates
hold:
\begin{itemize}
\item\label{eq:loc-24} for $q=2$ and $ 2\le p\le \infty$,
$$\|(fd\sigma)^{\vee}\|_{L^2_{t,x}}\lesssim
\min\{R^{\frac 12}, R^{\frac{n-1}{2}}\}\|f\|_{L^p(S)}.$$

\item for $q=4$ and $4\le p\le \infty$, $\forall \, \eps>0$,
$$\|(fd\sigma)^{\vee}\|_{L^4_{t,x}}\lesssim_{\eps}
\min\{R^{-\frac{n-2}{4}+\eps}, R^{\frac{n-1}{4}}\}\|f\|_{L^p(S)}.$$

\item for $q=3p'$ and $1\le p<4$,
$$\|(fd\sigma)^{\vee}\|_{L^q_{t,x}}\lesssim
\min\{R^{(n-2)(\frac 1q-\frac12)}, R^{\frac{n-1}{q}}\}\|f\|_{L^p(S)}.$$

\item for $q=\infty$ and $1\le p\le\infty$,
$$\|(fd\sigma)^{\vee}\|_{L^{\infty}_{t,x}}\lesssim
\min\{R^{-\frac {n-2}{2}}, 1\}\|f\|_{L^p(S)}.$$
\end{itemize}
where $A$ in $\{A, B\}$ is given by the case where $R\ge 2$ and $B$ by $R\le 1$. Furthermore, by
interpolation we obtain the sharp restriction estimates $L^p\to L^q$ when $p, q$ are in the region
determined by these lines (Figure \ref{fig:lin-dya}).
\end{theorem}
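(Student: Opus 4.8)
The plan is to use the cylindrical symmetry to collapse the $(n-1)$-dimensional extension operator to a one-dimensional one. Fix $f\in\L_1$, write $f(|\xi|^2,\xi)=g(|\xi|)$ with $g$ supported in $[1,2]$, and set $\nu=\frac{n-3}{2}$; the Fourier--Bessel formula (Lemma \ref{lem:Four-Bess}) gives
\[
(fd\sigma)^{\vee}(t,x)=c_n\,|x|^{-\nu}\int_1^2 g(r)\,e^{itr^2}\,J_\nu(r|x|)\,r^{\nu+1}\,dr ,
\]
so that $(fd\sigma)^{\vee}$ is a function of $(t,|x|)$ only, and everything hinges on the two regimes of the Bessel kernel. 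When $R\le 1$, so $r|x|\lesssim 1$, the factor $J_\nu(z)/z^\nu$ is $O(1)$ and essentially nonoscillatory, so on $A_R$ one has $(fd\sigma)^{\vee}(t,x)\approx F(t):=\int_{\R^{n-1}}g(|\xi|)e^{it|\xi|^2}\,d\xi$; hence $\|(fd\sigma)^{\vee}\|_{L^q(\R\times A_R)}\approx|A_R|^{1/q}\|F\|_{L^q_t}\sim R^{\frac{n-1}{q}}\|F\|_{L^q_t}$, and, after the substitution $s=r^2$, $\|F\|_{L^q_t}\lesssim\|f\|_{L^p(S)}$ by Hausdorff--Young (legitimate since $q\ge 2$ in every case) followed by H\"older on the finite-measure set $S$. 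This produces the ``$B$'' bounds. When $R\ge 2$, so $r|x|\gtrsim 1$, we use $J_\nu(z)=\sqrt{2/\pi z}\,\cos\!\big(z-\tfrac{\nu\pi}{2}-\tfrac{\pi}{4}\big)+O(z^{-3/2})$, which converts the kernel into $|x|^{-(n-2)/2}$ times the two unimodular waves $e^{\pm ir|x|}$ plus an error; since the factor $e^{itr^2}$ still provides oscillation in $t$ the error is integrated out cheaply and is lower order in every norm. Thus, up to the multiplier $|x|^{-(n-2)/2}\sim R^{-(n-2)/2}$, $(fd\sigma)^{\vee}$ is a weighted copy of the adjoint restriction operator for the parabola $r\mapsto(r^2,r)$ in $\R^2$, cut off to $\{|x|\sim R\}$; combined with $|A_R|\sim R^{n-1}$, the theorem in the range $R\ge 2$ becomes the sharp \emph{$R$-localized} restriction theory for the one-dimensional parabola (the transitional range $1<R<2$ being trivial, both monomials being $\sim 1$ there).

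Carrying this out at $q=2$: for fixed $x$, Plancherel in $t$ (after $s=r^2$, the unimodular factor $e^{\pm i\sqrt s\,|x|}$ cancelling) gives $\int_\R|(fd\sigma)^{\vee}(t,x)|^2\,dt\lesssim R^{-(n-2)}\|f\|_{L^2(S)}^2$ \emph{independently of $x$}, so integrating over $A_R$ inserts exactly $|A_R|\sim R^{n-1}$ and yields $R^{1/2}\|f\|_{L^2(S)}$; H\"older on $S$ then promotes $p=2$ to $2\le p\le\infty$. At $q=\infty$ the estimate is immediate from $\|J_\nu(r|x|)\|_{L^\infty}\lesssim\min\{(r|x|)^{-1/2},(r|x|)^\nu\}$ tested against $\|g\|_{L^1(r^{n-2}dr)}\lesssim\|f\|_{L^p(S)}$. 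For $q=4$ and the line $q=3p'$ with $1\le p<4$, the sharp exponents are precisely those of the $R$-localized parabola restriction, which I would obtain either by the Carleson--Sj\"olin argument applied to the one-dimensional oscillatory integral above (whose phase $tr^2+ru$ satisfies the Carleson--Sj\"olin nondegeneracy condition in $r$), or by squaring the operator and running a Whitney decomposition of the diagonal $\{r=r'\}$ in $[1,2]^2$ into transverse dyadic pairs of separation $\sim 2^{-j}$, applying on each piece the rescaled bilinear adjoint restriction estimate (of the type recorded in Theorem \ref{thm:dyadic-bilin}) and summing in $j$. Either way the sum is loss-free for $1\le p<4$, giving the stated $R^{(n-2)(\frac1q-\frac12)}$, and it diverges only logarithmically at the Stein--Tomas-type corner $(p,q)=(4,4)$, which is exactly why that line carries the $R^\eps$. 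Finally, real interpolation among these four families fills the closed polygon of Figure \ref{fig:lin-dya}, and in each estimate the minimum of the two monomials is governed by whether $R\ge 2$ (giving the exponent $A$) or $R\le 1$ (giving $B$).

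It remains to prove sharpness, i.e., that neither the powers of $R$ nor the $(p,q)$-regions can be improved. In the ``$B$'' regime, and at $q=2$ and $q=\infty$, the constant function $g\equiv 1$ on the annulus — a genuinely \emph{radial} Knapp-type example — already forces the claimed powers and pins down the edges of the region. The delicate part is the ``$A$'' regime on the $q=4$ and $q=3p'$ lines: since, as the introduction emphasizes, none of the standard Knapp caps or their squashed/stretched variants are rotation-invariant, one must instead superimpose radial wave packets $g(r)=e^{i\phi(r)}\sum_k\eps_k 1_{J_k}(r)$ with random signs $\eps_k$ and evaluate the resulting extension by the principles of stationary and non-stationary phase together with Khintchine's inequality. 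I expect this construction of counterexamples — rather than the positive estimates — to be the main obstacle, some of the examples (notably Example \ref{ex:II-larger}) being genuinely intricate; the one remaining technical subtlety on the positive side is the uniform control, over all of $A_R$, of the $O(z^{-3/2})$ Bessel remainder, which one encodes into an auxiliary convergent integral so that its contribution to each of the stated norms is strictly lower order than the main term.
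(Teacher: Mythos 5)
Your proposal reproduces the architecture of the paper's proof almost exactly: the Fourier--Bessel reduction to a one-dimensional extension operator, the $O(1)$ bound on $J_\nu(z)/z^\nu$ plus Hausdorff--Young for $R\le 1$, the splitting into a main term $|x|^{-(n-2)/2}e^{\pm ir|x|}$ and a Bessel remainder (controlled rigorously via an auxiliary integral, as in the paper's Proposition \ref{prop: lin-err J_m}) for $R\ge 2$, Plancherel in $t$ at $q=2$, a Whitney decomposition with bilinear $L^2$ orthogonality at $q=4$ (producing the $\log R\lesssim_\eps R^\eps$ loss), the Carleson--Sj\"olin/$TT^*$ argument with Hardy--Littlewood--Sobolev on the line $q=3p'$, the trivial bound at $q=\infty$, interpolation, and Knapp-type plus randomized (Khintchine) counterexamples. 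One caveat on the positive side: at the corner $(p,q)=(4,4)$ the Carleson--Sj\"olin route is not merely lossy but unavailable, since after the change of variables the kernel becomes $|s_1-s_2|^{-1}$ and Hardy--Littlewood--Sobolev fails at that endpoint; the Whitney/bilinear argument is the only one of your two options that works there, which is exactly why the paper uses it.

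There is one concrete step in your sharpness discussion that would fail: the \emph{unmodulated} constant $g\equiv 1$ does not saturate the bound $R^{-(n-2)/2}$ at $q=\infty$ (nor the region-III estimates) when $R\ge 2$. For that datum the main term is $r^{-(n-2)/2}\int_1^2 s^{(n-2)/2}e^{i(\pm rs-ts^2)}\,ds$, and the inner integral is $O(R^{-1})$ for bounded $t$ by non-stationary phase and only $O(R^{-1/2})$ even at the stationary times $t\sim R$, so the example falls short of the claimed power by at least $R^{-1/2}$. The paper's Example \ref{ex:lin-III} inserts the linear modulation $e^{-ir_0|\xi|+it_0|\xi|^2}$ precisely so that the phase degenerates at the evaluation point $(t_0,x_0)$ with $|x_0|=r_0$ and the inner integral is $\gtrsim 1$; the same modulation (together with the thin annulus of width $R^{-1/2}$) underlies the region-I example. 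Your own framework of modulated radial wave packets $e^{i\phi(r)}$ repairs this immediately, but as written the $q=\infty$ example does not force the claimed exponent.
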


\begin{figure}[ht]
 \begin{center}
\includegraphics[scale=0.6]{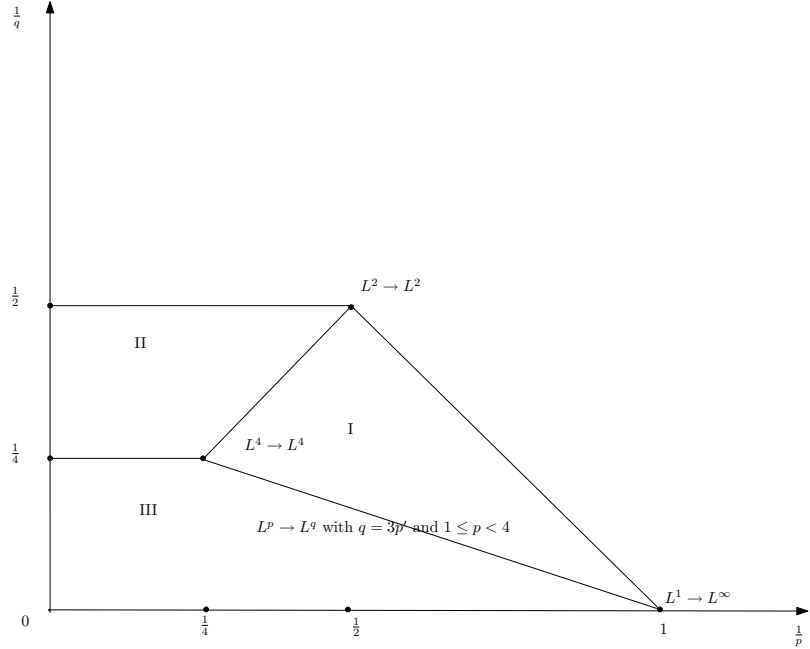}\\
  \caption{Linear restriction estimates on $\R\times A_R$.}\label{fig:lin-dya}
  \end{center}
\end{figure}

\begin{remark}
We observe that the estimates above in each case are ``continuous" in the sense that they match
when $R\sim 1$.
\end{remark}

One can easily obtain the following corollary regarding the linear adjoint restriction conjecture.
\begin{corollary}\label{cor:lin-restr} Suppose $f$ are
cylindrically functions supported on the paraboloid.
\begin{itemize}
\item If $f\in \L_M$, the linear adjoint restriction conjecture \ref{con:lin-restr} holds with
constants depending on $p,\,q,\,n$ and $M$ whenever $q>\frac {2n}{n-1}$, $\frac {1}{p}+\frac
{1}{q}\le 1$ and $ \frac {1}{p}+\frac {2n-1}{q}< n-1$ (Figure \ref{fig:lin}).
 \item The \textit{linear adjoint restriction conjecture}
 \ref{con:lin-restr} is true for all $f$ (Figure \ref{fig:lin}).
 \end{itemize}
 \end{corollary}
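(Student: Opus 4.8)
The goal is to deduce Corollary \ref{cor:lin-restr} from Theorem \ref{thm:dyadic-lin} by a dyadic summation argument. The first observation is that any cylindrically symmetric $f$ supported on the paraboloid can be decomposed as $f=\sum_{M\in 2^{\Z}} f_M$ with $f_M\in\L_M$, where only those $M$ in a fixed dyadic range contribute when the support is compact (for the second bullet) and only $M\sim M_0$ for some fixed $M_0$ contribute in the first bullet (where $f\in\L_M$ outright, so there is really only one piece). Next I would record the parabolic rescaling: if $f\in\L_M$ then setting $f(|\xi|^2,\xi)=h(M^{-1}\xi)$ gives $h\in\L_1$ and $(fd\sigma)^{\vee}(t,x)=M^{n-1}(hd\sigma)^{\vee}(M^2 t,Mx)$, so that $\|(fd\sigma)^{\vee}\|_{L^q(\R\times\R^{n-1})}=M^{n-1-\frac{n+1}{q}}\|(hd\sigma)^{\vee}\|_{L^q}$ and $\|f\|_{L^p(S)}=M^{\frac{n-1}{p}}\|h\|_{L^p(S)}$. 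Thus the estimate for a single $\L_M$ piece on all of space-time follows from the $\L_1$ case, and since the conditions $q>\frac{2n}{n-1}$, $\frac1p+\frac1q\le1$ are precisely those under which the relevant Lebesgue exponents land in the sharp region of Figure \ref{fig:lin-dya}, the first bullet of the corollary reduces to summing the localized estimates of Theorem \ref{thm:dyadic-lin} over dyadic annuli $A_R$.

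To carry out the annular summation for a fixed $f\in\L_1$, I would apply Theorem \ref{thm:dyadic-lin} on each slab $\R\times A_R$ and use the interpolated estimate valid in the sharp region: for $(1/p,1/q)$ strictly inside that region one gets $\|(fd\sigma)^{\vee}\|_{L^q_{t,x}(\R\times A_R)}\lesssim \min\{R^{-\alpha(p,q)},R^{\beta(p,q)}\}\|f\|_{L^p(S)}$, where $\alpha(p,q)>0$ exactly when one is strictly below the line $\frac1p+\frac{2n-1}{q}=n-1$ (this is the line through the $q=4$ decay exponent $-\frac{n-2}{4}$ and the $q=\infty$ exponent $-\frac{n-2}{2}$, reparametrized), and $\beta(p,q)>0$ always holds because the $R^{(n-1)/q}$-type bounds have positive exponent. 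Then
\begin{equation*}
\|(fd\sigma)^{\vee}\|_{L^q_{t,x}(\R\times\R^{n-1})}
\le\Big(\sum_{R\le1}+\sum_{R\ge2}\Big)\|(fd\sigma)^{\vee}\|_{L^q_{t,x}(\R\times A_R)}
\lesssim\Big(\sum_{R\le1}R^{\beta}+\sum_{R\ge2}R^{-\alpha}\Big)\|f\|_{L^p(S)},
\end{equation*}
and both geometric series converge because $\alpha,\beta>0$; here one uses $q<\infty$ so that the $\ell^q$ (or even $\ell^1$) sum of the pieces controls the full norm, together with the fact that the $A_R$ have bounded overlap. I would remark that the strict inequality $\frac1p+\frac{2n-1}{q}<n-1$ is exactly what makes $\alpha>0$, explaining why the open condition appears, and that the condition $\frac1p+\frac1q\le1$ (i.e. $p'\ge q$) together with $q>\frac{2n}{n-1}$ places the point inside the sharp region of Figure \ref{fig:lin-dya} after accounting for the scaling line $q=3p'$.

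For the second bullet, where $f$ is merely cylindrically symmetric and compactly supported on the paraboloid, I would first reduce to a single $\L_M$ piece: compact support bounds the annuli by $M\lesssim1$, and by Minkowski's inequality in the $M$-sum it suffices to prove the conjectured bound $\|(f_Md\sigma)^{\vee}\|_{L^q_{t,x}}\lesssim\|f_M\|_{L^p(S)}$ uniformly in $M\le1$, since then $\sum_M\|f_M\|_{L^p(S)}\lesssim\|f\|_{L^p(S)}$ up to a harmless loss absorbed by finitely many $M$ near $1$ — actually one must be slightly more careful and sum in $\ell^{p}$ on the frequency side and $\ell^{q}$ on the physical side, which works since $p\le q$ in the classical region $\frac{n+1}{q}\le\frac{n-1}{p'}$. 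The uniform-in-$M$ bound for $\L_M$ then follows from the $\L_1$ case by the parabolic rescaling above: the scaling exponent $n-1-\frac{n+1}{q}-\frac{n-1}{p}$ is nonpositive precisely on the line $\frac{n+1}{q}=\frac{n-1}{p'}$ and negative strictly inside the classical region, so $M\le1$ only helps. The classical region $q>\frac{2n}{n-1}$, $\frac{n+1}{q}\le\frac{n-1}{p'}$ is contained in the region handled by the first bullet (one checks the linear inequalities), so Theorem \ref{thm:dyadic-lin} applies. \textbf{The main obstacle} I anticipate is bookkeeping at the boundary: verifying that the endpoint line $q=3p'$ of Theorem \ref{thm:dyadic-lin} is consistent with the corollary's open region and that the min in the theorem genuinely gives a convergent geometric series in $R$ on \emph{both} ends simultaneously — i.e. confirming $\beta(p,q)>0$ on the small-$R$ side throughout the claimed region, not just $\alpha(p,q)>0$ on the large-$R$ side. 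This amounts to checking that the second entry of each min (the $R^{(n-1)/q}$-type exponents) is strictly positive for all admissible $(p,q)$, which is immediate since $q<\infty$, but it must be stated. The deduction is otherwise routine once the rescaling identity and the dyadic decomposition are in place.
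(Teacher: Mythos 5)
Your treatment of the first bullet is essentially the paper's: rescale $\L_M$ to $\L_1$, apply the localized estimates of Theorem \ref{thm:dyadic-lin} on each slab $\R\times A_R$, and sum the resulting geometric series in $R$, the strict inequality $\frac1p+\frac{2n-1}{q}<n-1$ being exactly what makes the large-$R$ exponent strictly negative. That part is fine.

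The second bullet, however, has a genuine gap at the summation over the frequency annuli. A compactly supported cylindrically symmetric $f$ decomposes into \emph{infinitely} many pieces $f_M$ with $M\to 0$ (the support may contain a neighbourhood of $\xi=0$), so your first claim that $\sum_M\|f_M\|_{L^p(S)}\lesssim\|f\|_{L^p(S)}$ ``up to a harmless loss absorbed by finitely many $M$ near $1$'' is false: the supports being disjoint gives only $\bigl(\sum_M\|f_M\|_{L^p}^p\bigr)^{1/p}=\|f\|_{L^p}$, and the $\ell^1$-in-$M$ sum produced by the triangle inequality is not controlled by this $\ell^p$ quantity. Your fallback --- ``sum in $\ell^p$ on the frequency side and $\ell^q$ on the physical side, which works since $p\le q$'' --- is the correct slogan, but you never supply the mechanism that produces an $\ell^q$ structure on the physical side: the functions $(f_Md\sigma)^{\vee}$ all live on the whole of $\R\times\R^{n-1}$, so there is no orthogonality to invoke at $L^q$, $q\neq 2$. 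The paper's proof fixes this precisely by \emph{not} summing in $R$ first for each fixed $M$: it keeps the double decomposition, bounds $\|\sum_M(f_Md\sigma)^{\vee}\|_{L^q(\R\times A_R)}\le\sum_M (RM)^{\alpha(RM)}\|f_M\|_{L^p}$ on each physical annulus (restricting to the scaling line $\frac{n+1}{q}=\frac{n-1}{p'}$, where the rescaling constant is $M$-independent), and then applies Schur's test to the kernel $K(R,M)=(RM)^{\alpha(RM)}$, using that $\sum_M K(R,M)<\infty$ uniformly in $R$ and $\sum_R K(R,M)<\infty$ uniformly in $M$ (the exponent $\alpha$ is negative for $RM\ge 2$ and positive for $RM\le 1$), together with $q>p$, to pass from $\ell^p$ in $M$ to $\ell^q$ in $R$. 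You already have the $R$-localized estimates with constant $(RM)^{\alpha(RM)}$ in hand; the missing idea is to exploit their decay in $RM$ in \emph{both} directions via Schur's test rather than collapsing the $R$-sum prematurely. (Separately, your sign for the rescaling exponent $n-1-\frac{n+1}{q}-\frac{n-1}{p}$ is reversed --- it is nonnegative in the classical region, vanishing on the scaling line --- though your conclusion that $M\le 1$ does no harm happens to be correct.)
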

 \begin{proof}
By Theorem \ref{thm:dyadic-lin}, the first assertion follows from scaling to $f\in \L_1$ and then
summing all dyadic $R$ and using interpolation.

To prove the second assertion, it is sufficient to obtain it under the boundary conditions
$q>\frac{2n}{n-1}$ and $\frac{n+1}{q}=\frac{n-1}{p'}$ since other estimates are easily obtained by
a standard argument of using the H\"older inequality. By interpolating between the $L^2 \to L^2 $
estimate and the $L^p\to L^q$ estimates on the line segment $q=3p'$ and $1\le p<4$, we obtain
that, for $q>\frac{2n}{n-1}$, $\frac{n+1}{q}=\frac{n-1}{p'}$ and $f\in \L_1$,
\begin{equation*}
\|(fd\sigma)^{\vee}\|_{L^{q}_{t,x}}\lesssim R^{\alpha(R)}\|f\|_{L^p(S)},
\end{equation*} where $\alpha$ is the step function
\begin{equation*}
\alpha(R)=\begin{cases}
         -\frac{n-2}{2}[1-\frac{2n}{q(n-1)}], &\text{for } R\ge 2, \\
         \frac{n-1}{q}, &\text{for } 0<R\le 1.
\end{cases}
\end{equation*} We remark that the constant above $C R^{\alpha(R)}$ does not depend on $\eps$.
By scaling, when $f\in \L_M$, under the same conditions on $p$ and $q$,
\begin{equation*}
\|(fd\sigma)^{\vee}\|_{L^{q}_{t,x}}\lesssim (RM)^{\alpha(RM)}\|f\|_{L^p(S)},
\end{equation*}
where $\alpha$ is defined as above. Then for all cylindrically symmetric $f$ supported on the
paraboloid, we decompose it into a sum of dyadically supported functions,
\begin{equation*}
f=\sum_{M: \,dyadic} f1_{\{(\tau, \xi): \tau=|\xi|^2, M\le |\xi|\le 2M\}}=\sum_{M} f_M,
\end{equation*} where $f_M:=f1_{\{(\tau, \xi): \tau=|\xi|^2, M\le |\xi|\le 2M\}}$.
Hence
\begin{align*}
\|(fd\sigma)^{\vee}\|_{L^q_{t,x}(\R\times \R^{n-1})}&
=\left(\sum_{R}\|(fd\sigma)^{\vee}\|^q_{L^q_{t,x}(\R\times A_R)}\right)^{1/q}\\
&=\left( \sum_{R}\|\sum_{M}(f_Md\sigma)^{\vee}\|^q_{L^q_{t,x}(\R\times A_R)}\right)^{1/q}\\
&\le \left(\sum_{R}\left( \sum_{M}\|(f_Md\sigma)^{\vee}\|_{L^q_{t,x}(\R\times A_R)}\right)^q
\right)^{1/q}\\
&\lesssim \left(\sum_{R}\left(\sum_{M}(RM)^{\alpha(RM)}\|f_M\|_{L^p(S)}\right)^q\right)^{1/q}\\
&\lesssim \left(\sum_{M}\|f_M\|^p_{L^p(S)}\right)^{1/p}=\|f\|_{L^p(S)},
\end{align*} where $R>0$ and $M>0$ are both dyadic numbers; in the last inequality, since
\begin{align*}
\forall R>0,\,
\sum_{M}(RM)^{\alpha(RM)}< \infty,\\
\forall M>0, \,\sum_{R}(RM)^{\alpha(RM)}< \infty,
\end{align*} we have used the Schur's test for exponents $p$ and $q$ satisfying the condition
$q>\frac{2n}{n-1}>p\ge 1$.
\end{proof}

\begin{remark}
In the cylindrically symmetric case, we remark that $q>\frac{2n}{n-1}$ is still sharp since it is
given by the decay estimate $|(d\sigma)^{\vee}(t,\xi)|\le C_n(1+|t|+|\xi|)^{(1-n)/2}$, see e.g.,
\cite[Chapter 8, Theorem 3.1]{Stein:1993}.
\end{remark}

\begin{figure}[ht]
\begin{center}
 \includegraphics[scale=0.6]{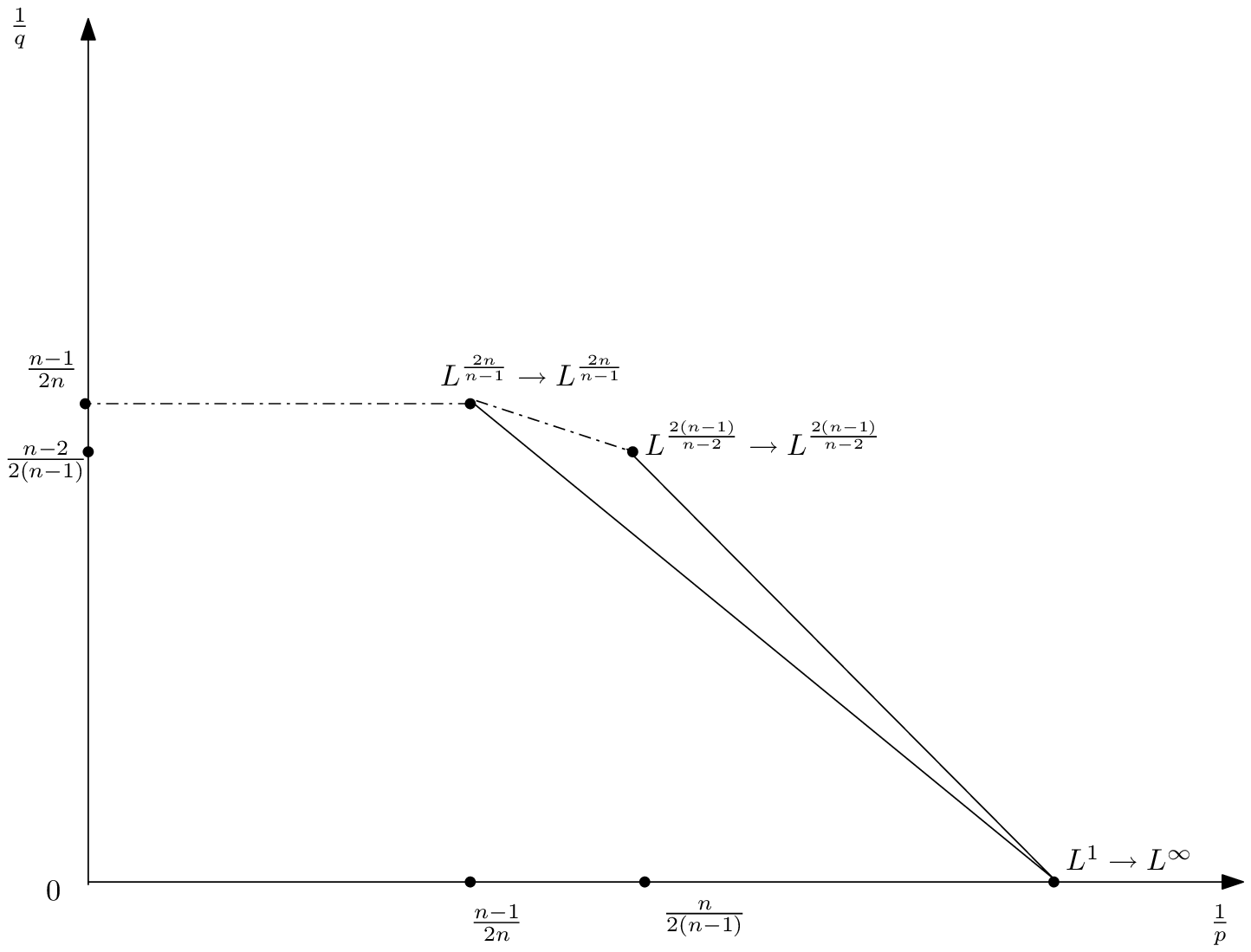}\\
  \caption{Linear restriction estimates for $\L_M$ (cf.~the inside trapezoid, the
   classical range of $p$ and $q$ for Conjecture \ref{con:lin-restr}).}\label{fig:lin}
\end{center}
\end{figure}
Next we state the theorem regarding the bilinear restriction estimates in the cylindrically
symmetric case, which is proven in Section \ref{sec:biliear}.
\begin{theorem}\label{thm:dyadic-bilin}
Suppose $f\in \L_1$ and $g\in \L_M$ with $0<M\le 1/4$. $R>0$ is a dyadic number. Then the
following sharp bilinear restriction estimates hold:
\begin{itemize}
\item for $q=1$ and $ 2\le p\le \infty$,
\begin{align*}
\|&(fd\sigma_1)^{\vee} (gd\sigma_2)^{\vee}\|_{L^1_{t,x}}\nonumber\\
&\lesssim \min\{RM^{\frac{n-2}{2}-\frac{n-1}{p}},\,R^{\frac{n}{2}}M^{-1+\frac{n-1}{p'}},\,R^{n-1}
M^{-1+\frac{n-1}{p'}}\}\|f\|_{L^p(S_1)}\|g\|_{L^p(S_2)}.
\end{align*}
\item for $q=2$ and $2\le p\le \infty$,
\begin{align*}
\|&(fd\sigma_1)^{\vee} (gd\sigma_2)^{\vee}\|_{L^2_{t,x}}\nonumber\\
&\lesssim
\min\{R^{-\frac{n-2}{2}}M^{\frac{n-1}{2}-\frac{n-1}{p}},\,R^{\frac{1}{2}}M^{\frac{n-1}{p'}},
\,R^{\frac{n-1}{2}}M^{\frac{n-1}{p'}}\}\|f\|_{L^p(S_1)}\|g\|_{L^p(S_2)}.
\end{align*}
\item for $q=4$ and $4\le p\le \infty$, $\forall \, \eps>0$,
\begin{align*}&\|(fd\sigma_1)^{\vee} (gd\sigma_2)^{\vee}\|_{L^4_{t,x}}\\
&\lesssim_{\eps}\min\{R^{-\frac{3(n-2)}{4}+\eps}M^{\frac{n}{2}-\frac{n-1}{p}},\,R^{-\frac{n-2}{4}+\eps}
M^{\frac{n-1}{p'}},\,R^{\frac{n-1}{4}}M^{\frac{n-1}{p'}}\}\|f\|_{L^p(S_1)}\|g\|_{L^p(S_2)}.
\end{align*}
\item for $q=3p'$ and $1\le p<4$,
\begin{align*}
&\|(fd\sigma_1)^{\vee} (gd\sigma_2)^{\vee}\|_{L^q_{t,x}}\\
&\lesssim\min\{R^{-\frac{n-2}{q'}}M^{\frac{n}{2}-\frac{n-1}{p}},\,R^{(n-2)(\frac1q-\frac{1}{2})}
M^{\frac{n-1}{p'}},\,R^{\frac{n-1}{q}}M^{\frac{n-1}{p'})}\}\|f\|_{L^p(S_1)}\|g\|_{L^p(S_2)}.
\end{align*}
\item for $q=\infty$ and $1\le p\le\infty$,
\begin{align*}
&\|(fd\sigma_1)^{\vee}(gd\sigma_2)^{\vee}\|_{L^{\infty}_{t,x}} \\
&\lesssim\min\{R^{-(n-2)}M^{\frac{n}{2}-\frac{n-1}{p}},\,R^{-\frac{n-2}{2}}
M^{\frac{n-1}{p'}},\,M^{\frac{n-1}{p'}}\}\|f\|_{L^p(S_1)}\|g\|_{L^p(S_2)}.
\end{align*}
\end{itemize}
where $A$ in $\{A, B, C\}$ is given by the case where $R\ge 1/M$, B by $2\le R\le 1/M$ and $C$ by
$R\le 1$. Furthermore, by interpolation we obtain the sharp restriction estimates $L^p\times
L^p\to L^q$ when $p$ and $q$ are in the region determined by these lines (Figure \ref{fig:
bilin-dya}).
\end{theorem}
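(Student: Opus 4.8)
The plan is to reduce everything to the Fourier–Bessel formula (Lemma \ref{lem:Four-Bess}) and then track the behaviour of the resulting oscillatory integrals in the three regimes $R\ge 1/M$, $2\le R\le 1/M$, and $R\le 1$. First I would write $(fd\sigma_1)^{\vee}$ and $(gd\sigma_2)^{\vee}$ in polar coordinates: since $f\in\L_1$ and $g\in\L_M$ are cylindrically symmetric, each extension becomes, up to harmless constants, a one-dimensional integral over the radial variable $r$ against a Bessel kernel $J_{(n-3)/2}(r|x|)$ times $e^{itr^2} r^{n-2}$. The key structural point, emphasised already in the introduction, is to split the Bessel function into its main oscillatory term $\sim (r|x|)^{-(n-2)/2}e^{\pm i r|x|}$ and a rapidly decaying error; the error is absorbed into convergent integrals and contributes only lower-order terms, so the bulk of the work is on the main term. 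This turns $(fd\sigma_1)^{\vee}(t,x)$ into a superposition of one-dimensional oscillatory integrals $\int e^{i(tr^2\pm r|x|)} a(r)\,dr$ with $a$ supported on $r\sim 1$, and likewise for $g$ with $a$ supported on $r\sim M$ (so after rescaling $r=Ms$, a phase $e^{i(M^2 t s^2\pm M s|x|)}$).

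Next I would estimate the product of these two one-dimensional integrals. For the endpoint exponents $q=1,2,4,3p'$ and $q=\infty$ listed in the statement, the strategy is: (i) the $L^\infty_{t,x}$ bound comes from stationary phase / van der Corput applied to each factor separately, giving the decay powers $R^{-(n-2)}$, $R^{-(n-2)/2}$ or $1$ according to how much oscillation survives on the slab $|x|\sim R$; (ii) the $L^1$ and $L^2$ bounds in $t,x$ come from Plancherel in $t$ (the phase $tr^2$ makes $t\mapsto$ the integral essentially a Fourier transform in the variable $r^2$) combined with the radial Jacobian $|x|^{n-2}\,d|x|$ on $A_R$; (iii) the intermediate $q=4$ and $q=3p'$ estimates are obtained by the Carleson–Sjölin argument — expanding $|(fd\sigma_1)^{\vee}(gd\sigma_2)^{\vee}|^2$ and exploiting the transversality of $S_1$ and $S_2$ (here $M\le 1/4$ guarantees the annuli $\{|\xi|\sim 1\}$ and $\{|\xi|\sim M\}$ are genuinely separated) so that the change of variables into sum-and-difference frequency variables has non-degenerate Jacobian — together with the bilinear Whitney-decomposition bookkeeping. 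In each case one first proves the estimate for $p=\infty$ (or $p=2$, whichever anchors the line) and then the $L^p$ dependence in $M$, the factors $M^{-1+\frac{n-1}{p'}}$, $M^{\frac{n-1}{p'}}$, $M^{\frac n2-\frac{n-1}{p}}$ etc., is read off by interpolating against the trivial $\|g\|_{L^\infty}\,|S_2|^{1/p}$-type bound, using $|S_2|\sim M^{n-1}$. The three-way minimum then records which mechanism (full oscillation decay, partial decay, or no decay / trivial volume bound) dominates in each of the three ranges of $R$ relative to $1/M$ and $1$.

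Finally, the "furthermore" clause — that interpolation between these lines yields sharp $L^p\times L^p\to L^q$ estimates throughout the indicated region — follows by complex (Stein) interpolation in $(1/p,1/q)$ applied to the analytic family of extension operators, exactly as in the linear Corollary \ref{cor:lin-restr}: one checks that the exponents $\alpha(R),\beta(M)$ appearing as powers of $R$ and $M$ are piecewise linear in $(1/p,1/q)$ on each segment, so interpolation produces the convex-hull region, and sharpness on the boundary is inherited from the sharpness asserted at the five listed exponents (which is where the counterexamples, e.g. Example \ref{ex:II-larger}, enter). I expect the main obstacle to be item (iii): making the Carleson–Sjölin / Whitney machinery interact cleanly with the Bessel-kernel reduction, since after passing to the one-dimensional radial integrals the transversality between $S_1$ and $S_2$ has to be re-expressed as a lower bound on a one-variable Hessian, and one must check uniformly in $M$ (as $M\to 0$) that the $\eps$-loss at $q=4$ is confined to the $R$-power and does not infect the $M$-power — this uniformity is what makes the stated estimates genuinely sharp rather than merely true.
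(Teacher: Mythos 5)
Your skeleton (Fourier--Bessel reduction, main term plus error, the three regimes $R\ge 1/M$, $2\le R\le 1/M$, $R\le 1$, interpolation afterwards) matches the paper, but at the two places where the work actually happens your proposed mechanisms do not suffice. First, for $q=2$ in the regime $R\ge 1/M$ you claim the bound comes from ``Plancherel in $t$ combined with the radial Jacobian''. That cannot produce the stated factor $R^{-\frac{n-2}{2}}M^{\frac{n-1}{2}-\frac{n-1}{p}}$: applying Plancherel in $t$ for each fixed $r$ and then integrating over $r\in[R/2,R]$ loses an extra factor of order $(RM)^{1/2}\ge 1$ in this regime. The genuinely bilinear input, which is where $M\le 1/4$ (transversality) is used in the paper's Proposition \ref{prop: l-bilin-error}, is the change of variables $(s_1,s_2)\mapsto(s_1+s_2,\,s_1^2+s_2^2)$ on $I_1\times I_M$, whose Jacobian is $\sim|s_1-s_2|\sim 1$, followed by Plancherel in \emph{both} $t$ and $r$. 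You do mention the sum/square change of variables, but you attach it to the $q=4$ and $q=3p'$ lines, where the paper does not need it at all: there the estimate \eqref{eq:loc-4} is obtained simply by H\"older, putting $(fd\sigma_1)^{\vee}$ in $L^q$ via the already-proved linear Theorem \ref{thm:dyadic-lin} and the low-frequency factor in $L^\infty$ via the pointwise bound $\|\M g\|_{L^\infty}\lesssim R^{-\frac{n-2}{2}}M^{\frac n2-\frac{n-1}{p}}\|g\|_{L^p}$ (and similarly with $\E g$). So the ``main obstacle'' you foresee in your item (iii) --- running a bilinear Carleson--Sj\"olin/Whitney argument uniformly in $M$ and keeping the $\eps$-loss off the $M$-power --- is an obstacle of your own route which you do not resolve; the paper's route makes it moot, since the only $\eps$-loss is inherited from the linear $q=4$ estimate and never touches $M$.

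Second, the theorem asserts \emph{sharp} estimates, and roughly half of the paper's Section \ref{sec:biliear} consists of the counterexamples (Knapp-type, the Khintchine ``spread-out'' construction of Example \ref{ex:II-larger}, and stationary/non-stationary phase examples) built separately in each of the three regimes and for each of the five regions of Figure \ref{fig: bilin-dya}. Your proposal defers all of this to ``sharpness asserted at the five listed exponents'', which is circular as written: the sharpness at those exponents is precisely what has to be constructed. Minor further points: the $M$-powers such as $M^{-1+\frac{n-1}{p'}}$ and $M^{\frac{n-1}{p'}}$ are obtained in the paper directly by H\"older on the support $I_M$ (e.g.\ $\|(gd\sigma_2)^{\vee}\|_{L^\infty}\le\|g\|_{L^1}\lesssim M^{\frac{n-1}{p'}}\|g\|_{L^p}$), not by interpolation against a trivial bound, and in the intermediate regime $2\le R\le 1/M$ one must keep $(gd\sigma_2)^{\vee}$ intact (no main/error split for $g$, since $r|\xi|$ need not be large there), a case distinction your reduction glosses over.
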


\begin{figure}[ht]
\begin{center}
\scalebox{0.6}[0.5]{\includegraphics{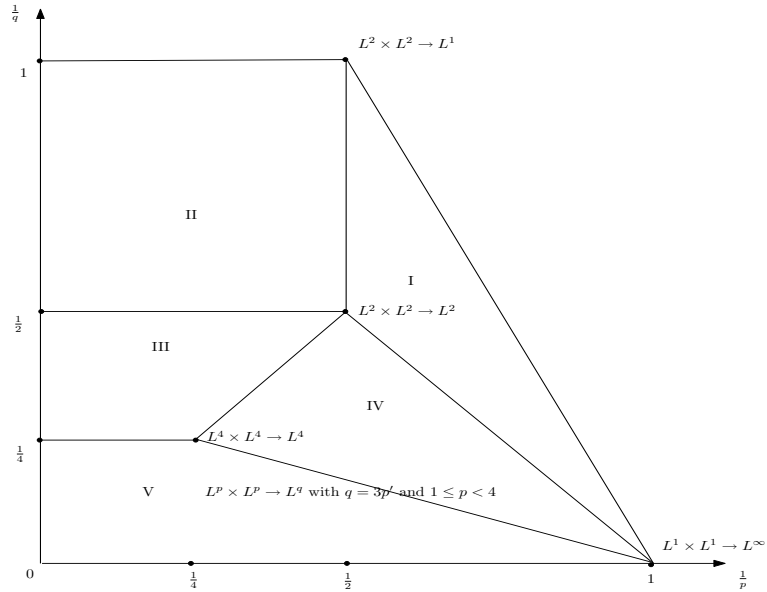}}\\
\caption{Bilinear restriction estimates on $\R\times A_R$.}\label{fig: bilin-dya}
\end{center}
\end{figure}
\begin{remark}
We observe that $R\ge \frac1M$ if and only if $A\lesssim B$ and $R\ge 1$ if and only if $B\lesssim
C$. In other words, the estimates are ``continuous'' in the sense that $A\sim B$ when $R\sim 1/M$
and $B\sim C$ when $R\sim 1$.
\end{remark}
As a corollary of Theorem \ref{thm:dyadic-bilin}, we see that the \emph{bilinear adjoint
restriction conjecture} holds for exponents $p$ and $q$ in a larger region.
\begin{corollary}\label{cor:bilin-restr} Suppose $f$ and $g$ are defined as Theorem
\ref{thm:dyadic-bilin}. Then the \textit{bilinear adjoint restriction conjecture}
\ref{con:bilin-restr} holds with constants depending on $p,\,q,\,n,\, S_1,\, S_2$ and $M$,
whenever $q>\frac {n}{n-1}$, $ \frac {2}{p}+\frac {n}{q}<n$ and $\frac {2}{p}+\frac {1}{q}\le 2$.
These estimates are sharp except for certain endpoints (Figure \ref{fig:bilin}).
\end{corollary}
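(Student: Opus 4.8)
The plan is to follow the pattern of the proof of Corollary \ref{cor:lin-restr}, supplying the additional bookkeeping that the bilinear setting requires. Write $F=(fd\sigma_1)^{\vee}$ and $G=(gd\sigma_2)^{\vee}$. First I would use the parabolic rescaling $\xi\mapsto\lambda\xi$, which multiplies $f\in\L_{M_1}$ and $g\in\L_{M_2}$ by fixed powers of $\lambda$ and preserves the ratio $M_2/M_1$, to reduce to the normalized case $f\in\L_1$ and $g\in\L_M$ with $0<M\le1/4$; from here on $M$ is a fixed parameter and the implied constant is allowed to depend on it. Next, for each fixed dyadic $R$, the five displayed families in Theorem \ref{thm:dyadic-bilin} are bilinear estimates $L^p(S_1)\times L^p(S_2)\to L^q_{t,x}(\R\times A_R)$ whose constants all have the form $R^{a}M^{b}$, with exponents $a,b$ constant on each of the three ranges $R\le1$, $2\le R\le1/M$, $R\ge1/M$. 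Applying bilinear complex interpolation for fixed $R$ to these five endpoints yields, for every $(p,q)$ in the convex hull of the five endpoint lines in the $(1/p,1/q)$-plane, a slab estimate
\[
\|FG\|_{L^q_{t,x}(\R\times A_R)}\;\lesssim\;C(R,M)\,\|f\|_{L^p(S_1)}\|g\|_{L^p(S_2)},
\]
where again $C(R,M)=R^{a(R,M)}M^{b(R,M)}$ with $a(\cdot,M)$ a step function of $R$ with jumps only at $R\sim1$ and $R\sim1/M$, and $C(\cdot,M)$ continuous across these breakpoints by the Remark following Theorem \ref{thm:dyadic-bilin}. As in the proof of Corollary \ref{cor:lin-restr}, one arranges the interpolation so that no $\eps$ survives in $C(R,M)$ away from the $q=4$ endpoints, and the possible $\eps$-loss on the remaining portion is harmless since it sits inside the strict inequalities defining the region.

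Second, I would sum over the dyadic annuli $A_R$. Since $q>\frac{n}{n-1}>1$ throughout the claimed region, the embedding $\ell^1\hookrightarrow\ell^q$ gives
\[
\|FG\|_{L^q_{t,x}(\R\times\R^{n-1})}=\left(\sum_R\|FG\|^q_{L^q_{t,x}(\R\times A_R)}\right)^{1/q}\le\sum_R\|FG\|_{L^q_{t,x}(\R\times A_R)},
\]
so it suffices to prove $\sum_R C(R,M)<\infty$. In the large-scale range $R\ge1/M$ the exponent $a(R,M)$ of $R$, obtained by interpolating the $q=1$ and $q=2$ endpoints, equals $\frac{n}{q}-(n-1)$ on the part of the region with $q\le2$ and equals $-\frac{n-2}{q'}$ on the part with $q\ge2$; both are negative precisely because $q>\frac{n}{n-1}$ (and $q<\infty$), so the corresponding tail converges. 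In the small-scale range $R\le1$ the exponent $a(R,M)$ equals $\frac{n-1}{q}>0$ throughout the region, so that tail converges too. The intermediate band $2\le R\le1/M$ comprises $O(\log(1/M))$ dyadic scales and, by the continuity of $C(\cdot,M)$, contributes at most an $M$-dependent constant, which is admissible. The two remaining constraints $\frac{2}{p}+\frac{1}{q}\le2$ and $\frac{2}{p}+\frac{n}{q}<n$ serve to cut out exactly the part of the interpolation hull on which the endpoint estimates of Theorem \ref{thm:dyadic-bilin} are available. For $q=\infty$ one replaces the sum over $R$ by a supremum and argues identically.

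Finally, the sharpness up to certain endpoints is inherited from the counterexamples constructed alongside Theorem \ref{thm:dyadic-bilin} in Section \ref{sec:biliear} — the cylindrically symmetric analogues of the Knapp, squashed-cap and stretched-cap examples, together with the more delicate Example \ref{ex:II-larger} — each of which already saturates the corresponding slab estimate; optimizing the scale $R$ then transfers the obstruction to the global bilinear bound and pins down the boundary of the region (Figure \ref{fig:bilin}).

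The step I expect to be the main obstacle is the region bookkeeping in the summation paragraph: one must check that the three defining inequalities $q>\frac{n}{n-1}$, $\frac{2}{p}+\frac{n}{q}<n$, $\frac{2}{p}+\frac{1}{q}\le2$ describe \emph{exactly} the intersection of the convex hull of the five endpoint lines with the locus where $a(R,M)<0$ for $R\ge1/M$, with the correct strict-versus-nonstrict behaviour on each boundary segment, and that the summed estimate genuinely fails just outside this region. Because of the extra breakpoint $R\sim1/M$ and the tracking of the auxiliary parameter $M$, this is noticeably more involved than the two-regime linear case of Corollary \ref{cor:lin-restr}, though still entirely elementary — a matter of following affine functions of $1/p$ and $1/q$ through the three regimes.
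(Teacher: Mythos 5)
Your overall strategy --- interpolate the five families of slab estimates of Theorem \ref{thm:dyadic-bilin} for each fixed dyadic $R$, sum over $R$ separately in the three regimes $R\le 1$, $2\le R\le 1/M$, $R\ge 1/M$, and invoke the counterexamples of Section \ref{sec:biliear} for sharpness --- is exactly what the paper intends (it writes out no separate proof, treating the corollary as the bilinear analogue of the argument given for Corollary \ref{cor:lin-restr}). The gap is in the one exponent computation you do carry out, and it sits precisely at the spot you flag as the main obstacle. You assert that for $R\ge1/M$ the interpolated exponent $a$ of $R$ depends only on $q$ (namely $\frac nq-(n-1)$ for $q\le2$ and $-\frac{n-2}{q'}$ for $q\ge2$, ``both negative precisely because $q>\frac n{n-1}$''), and you then assign the hypothesis $\frac2p+\frac nq<n$ the role of restricting to where the endpoint estimates are available. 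Neither claim is correct. The endpoint data for $R\ge1/M$ is $R^{1}$ on the $q=1$ segment and $R^{-(n-2)/q'}$ on the other four; since $-(n-2)/q'\to0\ne1$ as $q\to1$, this data is not affine in $(1/p,1/q)$, so the interpolated exponent is its lower convex envelope on the hull $\{\frac2p+\frac1q\le2\}$. Because the $q=1$ and $q=2$ segments of the theorem exist only for $p\ge2$, a point with $\frac1p>\frac12$ cannot be reached by interpolating vertically between them; one must use a combination involving the vertex $(\frac1p,\frac1q)=(1,0)$ or the $q=3p'$ segment, and the envelope works out to
\begin{equation*}
a=\begin{cases}-\frac{n-2}{q'}, & \frac1p+\frac1q\le1,\\ \frac nq-(n-1), & \frac1p\le\frac12,\ \frac1q\ge\frac12,\\ \frac2p+\frac nq-n, & \frac1p\ge\frac12,\ \frac1p+\frac1q\ge1.\end{cases}
\end{equation*}

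In the third regime the convergence condition for $\sum_{R\ge1/M}R^{a}$ is exactly $\frac2p+\frac nq<n$: that hypothesis is the large-$R$ summability condition when $p<2$, not a hull condition. Taken at face value, your formula gives the wrong region: at $n=3$ and $(\frac1p,\frac1q)=(0.65,0.6)$, a point of the hull with $q=\frac53>\frac n{n-1}$, you would predict the exponent $\frac nq-(n-1)=-0.2$ and hence a global estimate, whereas the true envelope value is $\frac2p+\frac nq-n=+0.1$ and the dyadic sum diverges --- consistent with this point violating $\frac2p+\frac nq<n$ and lying outside the corollary. So the summation paragraph must be redone with the two-dimensional envelope above; once that is in place, the three stated inequalities are exactly $\{a<0\}$ intersected with the hull (the conditions are automatic in the first regime, and each of $q>\frac n{n-1}$ and $\frac2p+\frac nq<n$ is the negativity condition in one of the other two), and the remaining ingredients of your argument --- the small-$R$ exponent $\frac{n-1}{q}$, which \emph{is} globally affine and hence unambiguous; the $O(\log\frac1M)$ intermediate scales contributing an admissible $M$-dependent constant; the absorption of the $\eps$-loss from the $q=4$ line into the strict inequalities --- go through as you describe.
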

\begin{figure}[ht]
\begin{center}
\includegraphics[scale=0.6]{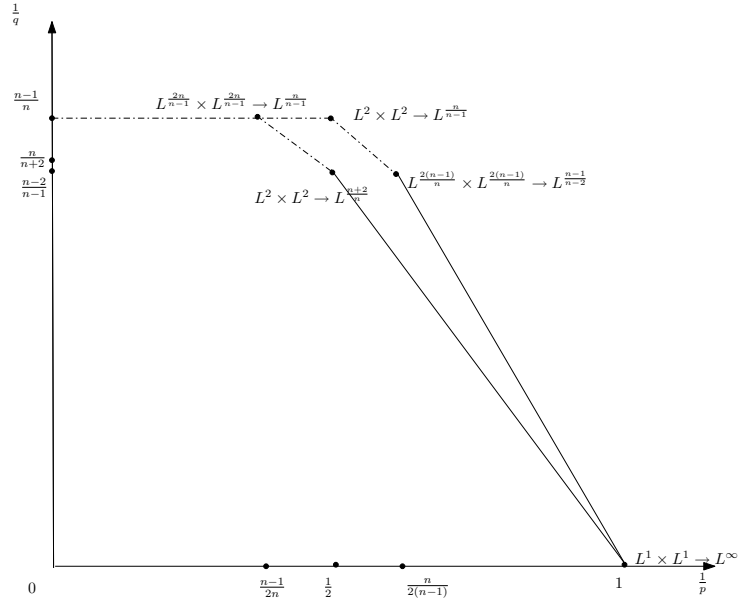}\\
\caption{Bilinear restriction estimates for functions in $\L_1$ and $\L_M$ with $0<M\le 1/4$ (cf.
~the inside pentagram, the classical range of $p$ and $q$ for Conjecture
\ref{con:bilin-restr}).}\label{fig:bilin}
\end{center}
\end{figure}
\begin{remark}
When $S$ is replaced by the lower third of the sphere, the analogous results to Theorems
\ref{thm:dyadic-lin} and \ref{thm:dyadic-bilin} hold, which will be accomplished in Section
\ref{sec:other-surf}. This is essentially due to the common geometric property of non-vanishing
Gaussian curvature shared by the sphere and the paraboloid and the fact that the sphere locally
resembles the paraboloid, which can be seen from the Taylor expansion $\sqrt{1-|\xi|^2}\sim
1-c(|\xi|)|\xi|^2$ when $|\xi|$ is small.
\end{remark}
\begin{remark}
It is well known that the adjoint restriction estimates are closely related to the Strichartz
estimates for the nonlinear dispersive equations such as the Schr\"odinger equation and the wave
equation, see e.g., \cite{Strichartz:1977} and \cite{Stein:1993}. We will establish this
connection in our case in Section \ref{sec:PDE appli}.
\end{remark}

This paper is organized as follows: Section \ref{sec:linear} is devoted proving Theorem
\ref{thm:dyadic-lin} and constructing counterexamples to show the linear estimates are sharp.
Section \ref{sec:biliear} is devoted proving Theorem \ref{thm:dyadic-bilin} and constructing
counterexamples showing the bilinear estimates are sharp. In Section \ref{sec:other-surf} we will
establish analogous results for the cylindrically symmetric functions supported on the lower third
of the sphere or the cylindrically symmetric and compact hypersurfaces of elliptic type. In
Section \ref{sec:PDE appli} we will interpret the restriction estimates in terms of solutions to
the Schr\"odinger equation to establish the Strichartz estimates.

\section{Proof of Theorem \ref{thm:dyadic-lin}: linear estimates and
examples}\label{sec:linear}

For any cylindrically symmetric function $f$ on the paraboloid, we set $F(|\xi|)=f(|\xi|^2, \xi)$.
We observe that $(fd\sigma)^{\vee}(t, x)$ is also a cylindrically symmetric function. To begin the
proof of Theorem \ref{thm:dyadic-lin}, we first investigate the behavior of $(fd\sigma)^{\vee}$ on
$\{|x|\le 1\}$ via the following proposition.
\begin{proposition}\label{prop: sml-err}
Suppose $f\in \L_1$. Then for any $1\le p\le \infty$, $q\ge \max\{2, p'\}$ and $R\le 1$, we have a
sharp estimate
\begin{equation}\label{eq:loc-1}
\|(fd\sigma)^{\vee}\|_{L^q_{t,x}} \lesssim R^{\frac{n-1}{q}}\|f\|_{L^p(S)}.
\end{equation}
\end{proposition}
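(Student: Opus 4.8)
The plan is to verify \eqref{eq:loc-1} at the four extreme exponent pairs and then fill in the rest of the range by Riesz--Thorin interpolation. In the coordinates $(1/p,1/q)$ the hypotheses $1\le p\le\infty$ and $q\ge\max\{2,p'\}$ describe exactly the closed quadrilateral with vertices $(1/p,1/q)=(0,0),\,(0,\tfrac12),\,(\tfrac12,\tfrac12),\,(1,0)$, that is, $(p,q)=(\infty,\infty),\,(\infty,2),\,(2,2),\,(1,\infty)$. Two of these are immediate: since $F(|\cdot|)$ is supported on the annulus $\{1\le|\xi|\le2\}$, whose Lebesgue measure is $O(1)$, we have $\|f\|_{L^1(S)}\lesssim\|f\|_{L^2(S)}\lesssim\|f\|_{L^\infty(S)}$, while $|(fd\sigma)^\vee(t,x)|\le\int_{\R^{n-1}}|F(|\xi|)|\,d\xi=\|f\|_{L^1(S)}$ for every $(t,x)$; this gives \eqref{eq:loc-1} at $(1,\infty)$ and at $(\infty,\infty)$ with constant $1=R^{(n-1)/\infty}$, and reduces $(\infty,2)$ to $(2,2)$. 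So everything comes down to the case $p=q=2$. Note finally that the target constant $R^{(n-1)/q}$ is log-linear in $1/q$, so Riesz--Thorin interpolation among the corner estimates reproduces exactly this constant at every intermediate $(p,q)$.

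For $p=q=2$ I would freeze $x$ and exploit orthogonality in $t$. Passing to polar coordinates $\xi=\rho\omega$, $\rho=|\xi|$,
\begin{equation*}
(fd\sigma)^\vee(t,x)=\int_1^2 F(\rho)\,\rho^{n-2}\Big(\int_{\S^{n-2}}e^{i\rho\,x\cdot\omega}\,d\mu(\omega)\Big)e^{it\rho^2}\,d\rho ,
\end{equation*}
and the substitution $\tau=\rho^2$ rewrites this as $\int_1^4 b_x(\tau)e^{it\tau}\,d\tau$, where $b_x$ is supported in $[1,4]$ and satisfies $|b_x(\tau)|\lesssim|F(\sqrt\tau)|$ because the spherical integral is bounded by $\mu(\S^{n-2})$ and $\rho\sim1$ on the support. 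By Plancherel in $t$,
\begin{equation*}
\int_\R|(fd\sigma)^\vee(t,x)|^2\,dt\ \lesssim\ \int_1^4|b_x(\tau)|^2\,d\tau\ \lesssim\ \int_1^2|F(\rho)|^2\,d\rho\ \sim\ \|f\|_{L^2(S)}^2 ,
\end{equation*}
\emph{uniformly in $x$}. Integrating over $x\in A_R$, whose measure is $\sim R^{n-1}$, yields $\|(fd\sigma)^\vee\|_{L^2_{t,x}(\R\times A_R)}\lesssim R^{(n-1)/2}\|f\|_{L^2(S)}$, which completes the upper bound. (This argument, and hence the whole upper bound, holds for every $R>0$; the restriction $R\le1$ is needed only for sharpness.)

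To prove sharpness I would test against one fixed $f_0\in\L_1$ whose radial profile $F_0$ is a smooth bump supported in $(1,2)$ with $F_0\equiv1$ on $[5/4,7/4]$, so that $\|f_0\|_{L^p(S)}\sim1$ for all $p$. This is the step that needs the most care, and it is where the Fourier--Bessel formula (Lemma \ref{lem:Four-Bess}) is actually used: rerunning the computation above, $b_x(\tau)$ equals, up to a nonzero constant, $F_0(\sqrt\tau)$ times $m(\sqrt\tau\,|x|)$ with $m(r)=J_{(n-3)/2}(r)/r^{(n-3)/2}$. For $|x|\le1$ and $\tau\in[1,4]$ we have $\sqrt\tau\,|x|\le2$, strictly below the first zero of $J_{(n-3)/2}$; since $m$ is continuous and strictly positive on $[0,2]$, this forces $|b_x(\tau)|\gtrsim|F_0(\sqrt\tau)|$ there, so by Plancherel again $\int_\R|(f_0d\sigma)^\vee(t,x)|^2\,dt\gtrsim\int_1^2|F_0(\rho)|^2\,d\rho\gtrsim1$ for \emph{every} $|x|\le1$ — not merely for very small $|x|$. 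Combining this with $\|(f_0d\sigma)^\vee(\cdot,x)\|_{L^1_t}\lesssim1$ (uniformly in $|x|\le1$, since $b_x$ is smooth with compact support) and the log-convexity bound $\|h\|_{L^2_t}\le\|h\|_{L^1_t}^{\mu}\|h\|_{L^q_t}^{1-\mu}$, valid for $q\ge2$, gives $\|(f_0d\sigma)^\vee(\cdot,x)\|_{L^q_t}\gtrsim1$ for every $q\in[2,\infty]$ and every $|x|\le1$. Taking $L^q_{t,x}(\R\times A_R)$ norms then gives $\|(f_0d\sigma)^\vee\|_{L^q_{t,x}(\R\times A_R)}\gtrsim R^{(n-1)/q}$ for all $R\le1$; since $\|f_0\|_{L^p(S)}\sim1$, the exponent $\tfrac{n-1}{q}$ in \eqref{eq:loc-1} cannot be improved anywhere in the stated range. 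The only two genuinely nontrivial points, then, are keeping the $L^2_{t,x}$ bound uniform in $x$ (which needs no oscillation in $x$, only $\mu(\S^{n-2})$) and, in the sharpness half, using the definite sign of the Bessel kernel $m$ on $[0,2]$ so that the lower bound survives all the way to $|x|\sim1$.
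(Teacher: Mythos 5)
Your argument is correct, but it takes a partly different route from the paper's. For the upper bound, the paper handles every admissible $(p,q)$ in one pass: polar coordinates, the substitution $a=s^2$, Hausdorff--Young in $t$ for $q>2$ (Plancherel for $q=2$) together with $\|(d\mu)^{\vee}\|_{L^{\infty}}\lesssim 1$, and then H\"older in $s$ (using $q'\le p$) to pass from $\|F\|_{L^{q'}(I)}$ to $\|F\|_{L^{p}(I)}$. You instead prove only the corner cases $(p,q)=(2,2),(\infty,2),(1,\infty),(\infty,\infty)$ --- your $(2,2)$ computation is exactly the paper's Plancherel-in-$t$ step --- and recover the full quadrilateral by Riesz--Thorin, which does reproduce the constant $R^{(n-1)/q}$ because its logarithm is affine in $1/q$; note that a general interior point requires a two-step interpolation (e.g.\ along a horizontal segment joining the edge $p=\infty$ to the edge $q=p'$), which is routine but worth stating. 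This avoids Hausdorff--Young at the cost of some bookkeeping. For sharpness you genuinely diverge from the paper: the paper tests with the modulated profile $F(|\xi|)=|\xi|^{-(n-2)}e^{it_0|\xi|^2}$ and lower-bounds the integral by absence of oscillation for $|t-t_0|\lesssim 1$ and $r\lesssim R\le 1$, separately for each $q$; you fix a single bump $F_0$, invoke $(d\mu)^{\vee}(\xi)=c_n|\xi|^{\frac{3-n}{2}}J_{\frac{n-3}{2}}(|\xi|)$ and the strict positivity of $m(r)=J_{\nu}(r)/r^{\nu}$ on $[0,2]$ (legitimate, since the first zero satisfies $j_{\nu,1}\ge j_{0,1}\approx 2.40$ for all $\nu=\frac{n-3}{2}\ge 0$, and $m(0)>0$) to get an $L^2_t$ lower bound from Plancherel at every $|x|\le 1$, and then upgrade to $L^q_t$, $q\ge 2$, by log-convexity against a uniform $L^1_t$ bound. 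The one place you should add a line is that this $L^1_t$ bound requires the $\tau$-derivatives of $b_x$ to be bounded uniformly in $|x|\le 1$ (true, since $m$ and $m'$ are bounded on $[0,2]$ and $|x|$ enters only through $\sqrt{\tau}\,|x|$), giving $|h_x(t)|\lesssim(1+|t|)^{-2}$ uniformly. In exchange, your example yields the lower bound on all of $A_R$ and for all $q\ge 2$ simultaneously from one function, while the paper's construction is more elementary, using nothing about Bessel zeros.
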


\begin{proof}
If we change to polar coordinates, the left-hand side of \eqref{eq:loc-1} is
\begin{align*}
\|(fd\sigma)^{\vee}&\|_{L^q_{t,x}}=\left(\int_{R/2}^{R}\int_{\R} \left|
\int_{1\le|\xi|\le2}f(\xi) e^{i(x\cdot \xi-t\cdot |\xi|^2)}d\,\xi\right|^q d\,t\,d\,x \right)^{1/q}\\
&= \left(\int_{R/2}^{R}\int_{\R}\left|\int_{I}F(s)s^{n-2}e^{-its^2}\int_{\S^{n-2}}e^{irs\omega}
d\mu(\omega)ds\right|^q dt\,r^{n-2}dr \right)^{1/q},
\end{align*} where $I=[1,2]$. Then we change variables back $s\to a^{1/2}$ to majorize it by
\begin{equation*}
\left(\int_{R/2}^{R}\int_{\R}\left|\int_{I'}
a^{(n-3)/2}F(a^{1/2})(d\mu)^{\vee}(ra^{1/2}e_1)e^{-ita}da\right|^q dt\,r^{n-2}dr \right)^{1/q},
\end{equation*} where $I'=[1, \sqrt2]$ and
$e_1=(1,0,\ldots,0)\in \R^{n-1}$.

Then by the Hausdorff-Young inequality when $q>2$ or the Plancherel theorem when $q=2$, changing
$s\to a=s^2$ and the fact that $\|(d\mu)^{\vee}\|_{L^{\infty}_{\omega}}\lesssim 1$, the left-hand
side of \eqref{eq:loc-1} is further bounded by
\begin{equation*}
R^{\frac{n-2}{q}}\left(\int_{R/2}^R \|F\|^q_{L^{q'}(I)}\,dr\right)^{1/q}\sim
R^{\frac{n-1}{q}}\|F\|_{L^{q'}(I)}.
\end{equation*} Then by  the H\"older inequality and the fact $\|F\|_{L^{p}(I)}\sim \|f\|_{L^{p}(S)}$,
\eqref{eq:loc-1} follows.

Now we will construct a counterexample to show the estimate \eqref{eq:loc-1} is sharp when $1\le
p\le \infty$ and $q\ge \max\{2,p'\}$. We take
$$f(|\xi|^2, \xi)=F(|\xi|)=|\xi|^{-(n-2)}1_{\{1\le |\xi|\le 2\}}
e^{it_0|\xi|^2},$$ where $t_0\in \R$. Then the left-hand side of \eqref{eq:loc-1} reduces to
$$\left(\int_{R/2}^{R}\int_{\R}\left|\int_{I}
e^{-i(t-t_0)s^2}\int_{\S^{n-2}}e^{irs\omega}d\mu(\omega)ds\right|^q dt\,r^{n-2}dr \right)^{1/q}.$$
We choose $r$ and $t$ satisfying that
$$r\in [R/100,R/50],\, |t-t_0|\le 1/100.$$ Then the left-hand side of
\eqref{eq:loc-1} $\gtrsim R^{\frac{n-1}{q}}$ and its right-hand side $\lesssim R^{\frac{n-1}{q}}$.
Hence \eqref{eq:loc-1} is easily seen to be sharp.
\end{proof}

Before investigating the behavior of $(fd\sigma)^{\vee}$ on $|x|\ge 1$, we shall exploit the
spatial rotation-invariant symmetry of $f$ in the following proposition.
\begin{lemma}[Fourier-Bessel formula]\label{lem:Four-Bess}
Suppose $f$ is a cylindrically symmetric function supported on the paraboloid. Then
\begin{align}\label{eq: Fourier-Bessel formula}
(f&d\sigma)^{\vee}(t, x)\nonumber\\
&=c_nr^{-\frac{n-2}{2}}\int_{I} F(s)s^{\frac{n-2}{2}} e^{i(rs-ts^2)}
ds+c_nr^{-\frac{n-2}{2}}\int_{I} F(s)s^{\frac{n-2}{2}}e^{-i(rs+ts^2)} ds\nonumber \\
&+c_n\int_{I}F(s)s^{n-2}e^{-its^2-irs}\int_{0}^{\infty}e^{-rsy}y^{\frac{n-4}{2}}[(y+2i)^{\frac{n-4}{2}}
-(2i)^{\frac{n-4}{2}}]dyds\nonumber\\
&-c_n\int_{I}F(s)s^{n-2}e^{-its^2+irs}\int_{0}^{\infty}e^{-rsy}y^{\frac{n-4}{2}}[(y-2i)^{\frac{n-4}{2}}
-(-2i)^{\frac{n-4}{2}}]dyds,
\end{align}
where $r=|x|$ and $I$ is the projection interval in the radial direction.
\end{lemma}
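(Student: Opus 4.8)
The plan is to start from the polar-coordinate representation of $(fd\sigma)^{\vee}$ already used in Proposition \ref{prop: sml-err}, namely
\[
(fd\sigma)^{\vee}(t,x)=c_n\int_I F(s)s^{n-2}e^{-its^2}(d\mu)^{\vee}(rs e_1)\,ds,
\]
where $r=|x|$ and $(d\mu)^{\vee}$ is the inverse Fourier transform of the surface measure on $\S^{n-2}$. The entire identity \eqref{eq: Fourier-Bessel formula} is therefore just a matter of finding the right exact formula for the Bessel-type kernel $(d\mu)^{\vee}(rse_1)$: the first two terms on the right-hand side are the ``main terms'' coming from the leading stationary-phase asymptotics $(d\mu)^{\vee}(\rho e_1)\approx c_n\rho^{-(n-2)/2}(e^{i\rho}+e^{-i\rho})$, and the last two double integrals are an exact integral representation of the error $(d\mu)^{\vee}(\rho e_1)-c_n\rho^{-(n-2)/2}(e^{i\rho}+e^{-i\rho})$. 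So the real content is an identity for $(d\mu)^{\vee}$, after which one substitutes $\rho=rs$, multiplies by $F(s)s^{n-2}e^{-its^2}$, and integrates in $s$.

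First I would recall that $(d\mu)^{\vee}(\rho e_1)=c_n\rho^{-(n-3)/2}J_{(n-3)/2}(\rho)$ for a Bessel function $J_\nu$, or equivalently use the classical subordination/Poisson-type formula
\[
(d\mu)^{\vee}(\rho e_1)=c_n\int_{-1}^{1}e^{i\rho u}(1-u^2)^{(n-4)/2}\,du.
\]
Splitting the interval at $u=0$ and changing variables $u=1-v$ on $[0,1]$ (so $(1-u^2)^{(n-4)/2}=v^{(n-4)/2}(2-v)^{(n-4)/2}$) and $u=-1+v$ on $[-1,0]$, one gets two pieces of the form $e^{\pm i\rho}\int_0^1 e^{\mp i\rho v}v^{(n-4)/2}(2-v)^{(n-4)/2}\,dv$. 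The next step is a contour-rotation (Cauchy's theorem / the standard trick of turning an oscillatory integral into a decaying Laplace integral): rotate the contour $v\in[0,\infty)$ to $v=-iy/\rho$ (respectively $v=+iy/\rho$), picking up factors that produce exactly the kernels $y^{(n-4)/2}(y+2i)^{(n-4)/2}$ and $y^{(n-4)/2}(y-2i)^{(n-4)/2}$ after the rescaling $v\mapsto y/(rs)$; the finite-interval correction and the truncation of the tail are what generate the main terms and the subtraction of $(2i)^{(n-4)/2}$ and $(-2i)^{(n-4)/2}$ inside the brackets. One has to be slightly careful about branch cuts of $(y\pm 2i)^{(n-4)/2}$ and about convergence at $y=0$ (the integrand behaves like $y^{(n-4)/2}\cdot(\text{const})$, integrable since $n\ge 3$) and as $y\to\infty$ (the $e^{-rsy}$ factor, together with $s\ge 1$ on $I$, gives decay).

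The main obstacle is bookkeeping: justifying the contour rotation rigorously (an arc-at-infinity estimate, plus handling the case when $n$ is odd so that $(n-4)/2$ is a genuine non-integer exponent and the branch of the power matters) and then checking that, after substituting $\rho=rs$ and rescaling the inner variable, the constants and phases line up precisely with the four displayed terms. I expect the cleanest route is to fix the branch of $z^{(n-4)/2}$ on the relevant half-plane, verify the identity first for $\rho>0$ real and then note both sides of \eqref{eq: Fourier-Bessel formula} are obtained by the same $\int_I(\cdot)\,ds$ against $F(s)s^{n-2}e^{-its^2}$, so no separate justification in $(t,x)$ is needed. Once the pointwise identity for $(d\mu)^{\vee}(rse_1)$ is in hand, the Fubini step to pull the $s$-integral inside is immediate because $F$ is supported on the compact interval $I$ and bounded, and the inner $y$-integrals converge absolutely and uniformly in $s\in I$.
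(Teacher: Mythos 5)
Your proposal is correct and is essentially the paper's own argument: the paper likewise passes to polar coordinates to get $(fd\sigma)^{\vee}(t,x)=\int_I F(s)s^{n-2}e^{-its^2}(d\mu)^{\vee}(rse_1)\,ds$, writes $(d\mu)^{\vee}(\rho e_1)=c_n\rho^{\frac{3-n}{2}}J_{\frac{n-3}{2}}(\rho)$, and obtains the exact main/error splitting by running the Stein--Weiss Lemma 3.11 contour-rotation argument on the Poisson integral for $J_m$ with $m=\frac{n-3}{2}$ --- which is precisely the subordination-formula-plus-contour-rotation computation you perform directly on $(d\mu)^{\vee}$. The only caveats are bookkeeping points you already flag: the kernels $(y\pm2i)^{\frac{n-4}{2}}$ and the factor $e^{-rsy}$ come out without any $1/(rs)$ rescaling of the rotated variable, and the main terms arise by splitting off the constant $(\pm2i)^{\frac{n-4}{2}}$ and evaluating $\int_0^\infty e^{-\rho y}y^{\frac{n-4}{2}}dy$ as a Gamma function.
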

\begin{proof}
We first expand $(fd\sigma)^{\vee}$ in the polar coordinates,
\begin{equation*}
(fd\sigma)^{\vee}(t, x)=\int_{\{|\xi|\in I\}} f(|\xi|^2, \xi)e^{i(x\xi-t|\xi|^2)}d\xi
 =\int_I F(s)e^{-its^2}s^{n-2}(d\mu)^{\vee}(rse_1)ds,
\end{equation*} where $d\mu$ is the surface measure of the sphere $\S^{n-2}$.

On the one hand, the inverse Fourier transform of $d\mu$ is given by
\begin{equation*}(d\mu)^{\vee}(\xi)=c_n|\xi|^{\frac {3-n}{2}}J_{\frac{n-3}{2}}(|\xi|),\end{equation*}
where $J_{\frac {n-3}{2}}$ is the Bessel function of order $\frac {n-3}{2}$, see e.g.
\cite{Stein:1993} or \cite{Stein-Weiss:1971:fourier-analysis}. On the other hand, by using the
same argument as proving \cite[Lemma 3.11]{Stein-Weiss:1971:fourier-analysis}, we obtain that, for
fixed $m\ge 0$,
\begin{align*}
J_m(r)&=c_m \frac{e^{ir}-e^{-ir}}{r^{1/2}}+c_mr^me^{-ir}\int_{0}^{\infty}e^{-ry}y^{\frac{2m-1}{2}}
[(y+2i)^{\frac{2m-1}{2}}-(2i)^{\frac{2m-1}{2}}]d\,y\\
&-c_mr^me^{ir}\int_{0}^{\infty}e^{-ry}y^{\frac{2m-1}{2}}[(y-2i)^{\frac{2m-1}{2}}-
(-2i)^{\frac{2m-1}{2}}]d\,y.
\end{align*} Hence \eqref{eq: Fourier-Bessel formula} holds after we combine these two estimates and set
$m=\frac {n-3}{2}$.
\end{proof}

Therefore we define the error term of $(fd\sigma)^{\vee}$ by
\begin{align*}
\mathcal{E}&f(t,x):=\pm c_n\int_{I}F(s)s^{n-2}e^{-its^2\mp irs}\times\\
&\times\int_{0}^{\infty}e^{-rsy}y^{\frac{n-4}{2}}[(y\pm2i)^{\frac{n-4}{2}}
-(\pm2i)^{\frac{n-4}{2}}]dyds,
\end{align*} where by $\pm$ we denote a sum of two terms where $+$ and $-$ appear alternatively.
Heuristically, one should think of $\E f$ as a term comparable to
$r^{-n/2}\int_{I}F(s)s^{\frac{n-4}{2}}e^{-its^{2}}ds$, which comes from estimating the error term
of Bessel function $J_m(r)$ by $r^{-3/2}$. At the first approximation, this simplification is easy
to deal with and intuitively provides what the bound involving the error term should be. However
in this paper we will establish it rigorously in the following proposition, which shows that the
information about its contribution to the linear estimates when $|x|\ge 1$. It is acceptable
compared to the main term estimates established in the next propositions.
\begin{proposition}\label{prop: lin-err J_m}
Suppose $f\in \L_1$. Then for any $q\ge \max\{2, p'\}$ and any dyadic number $R\ge 2$ and $f\in
L^p(S)$,
\begin{equation} \label{eq:loc-3}
\|\mathcal{E}f\|_{L^q_{t,x}}\lesssim R^{-\frac n2+\frac{n-1}{q}}\|f\|_{L^{p}({S})}.
\end{equation}
\end{proposition}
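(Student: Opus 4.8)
The plan is to treat $\mathcal{E}f$ as a lower-order variant of the extension operator: once the inner $y$-integral is bounded pointwise, the argument parallels the proof of Proposition~\ref{prop: sml-err}. Write $\beta:=\tfrac{n-4}{2}$ and, for $r=|x|$ and $s\in I=[1,2]$,
$$K_{\pm}(r,s):=\int_{0}^{\infty}e^{-rsy}\,y^{\beta}\bigl[(y\pm 2i)^{\beta}-(\pm 2i)^{\beta}\bigr]\,dy .$$
The key claim is that $|K_{\pm}(r,s)|\lesssim r^{-n/2}$ uniformly for $s\in I$ and $r\ge 1$. Granting this, observe that $\mathcal{E}f$ is a sum of two terms, each of the form $c_{n}\int_{I}F(s)\,s^{n-2}e^{\mp irs}K_{\pm}(r,s)\,e^{-its^{2}}\,ds$; substituting $a=s^{2}\in[1,4]$ displays each term, for fixed $x$, as a constant multiple of the one-dimensional Fourier transform in $a$ (evaluated at $t$) of $G_{r}(a):=F(\sqrt a)\,a^{(n-3)/2}e^{\mp ir\sqrt a}K_{\pm}(r,\sqrt a)\,1_{[1,4]}(a)$, and $|G_{r}(a)|\lesssim r^{-n/2}\,|F(\sqrt a)|$. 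Since $q\ge 2$, the Hausdorff--Young inequality (Plancherel when $q=2$) gives $\|\mathcal{E}f(\cdot,x)\|_{L^{q}_{t}}\lesssim r^{-n/2}\|F\|_{L^{q'}(I)}$; raising to the power $q$ and integrating $r^{n-2}\,dr$ over $R/2\le r\le R$ — where the resulting exponent $n-2-\tfrac{nq}{2}$ is $\le -2$, so the integral is $\lesssim R^{\,n-1-\frac{nq}{2}}$ — yields $\|\mathcal{E}f\|_{L^{q}_{t,x}}\lesssim R^{\frac{n-1}{q}-\frac n2}\|F\|_{L^{q'}(I)}$. Finally $q\ge p'$ and $|I|\sim 1$ let Hölder turn $\|F\|_{L^{q'}(I)}$ into $\|F\|_{L^{p}(I)}\sim\|f\|_{L^{p}(S)}$, which is \eqref{eq:loc-3}.

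The real content is the bound on $K_{\pm}$, and I would prove it by splitting the $y$-integral at $y=1$. On $[0,1]$ one exploits the cancellation in the bracket: since $z\mapsto z^{\beta}$ (principal branch) is holomorphic in a neighbourhood of $[0,\infty)$ — the branch point $\mp 2i$ sits at distance $2$ from the real axis — one has $|(y\pm 2i)^{\beta}-(\pm 2i)^{\beta}|=\bigl|\int_{0}^{y}\beta(t\pm 2i)^{\beta-1}\,dt\bigr|\lesssim y$ on $[0,1]$, so this portion is bounded by $\int_{0}^{\infty}e^{-rsy}y^{\beta+1}\,dy=\Gamma(\tfrac n2)(rs)^{-n/2}\lesssim r^{-n/2}$, using $\beta+2=\tfrac n2$ and $s\sim 1$. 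On $[1,\infty)$ there is no cancellation, but none is needed: the crude bound $|(y\pm 2i)^{\beta}-(\pm 2i)^{\beta}|\lesssim(1+y)^{n}$ together with $rs\ge 1$ — which makes $e^{-rsy}$ decay exponentially on $[1,\infty)$ — bounds this portion by $\lesssim e^{-rs}\lesssim e^{-r}\lesssim r^{-n/2}$. Adding the two pieces gives $|K_{\pm}(r,s)|\lesssim r^{-n/2}$ for all $r\ge 1$, $s\in I$.

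The step I expect to be the main obstacle is precisely this kernel bound, and within it the treatment of the bracket near $y=0$: the factor $y^{\beta}$ alone only produces $(rs)^{-(n-2)/2}$, which is weaker than needed by one full power of $r$; it is the vanishing of $(y\pm 2i)^{\beta}-(\pm 2i)^{\beta}$ at $y=0$, worth an extra factor of $y$, that promotes this to $(rs)^{-n/2}$ — the rigorous form of the heuristic that the error of $J_{m}$ decays like $r^{-3/2}$ rather than $r^{-1/2}$. I would also emphasise that only the pointwise size of $K_{\pm}$ is used: unlike the main terms of Lemma~\ref{lem:Four-Bess}, the error term needs no oscillation from the $y$-integral itself, only the oscillation of $e^{-its^{2}}$, which Hausdorff--Young converts into the $L^{q}_{t}$ bound. (When $n=4$ one has $\beta=0$, the bracket vanishes identically, $\mathcal{E}f\equiv 0$, and there is nothing to prove — consistent with $J_{1/2}$ being elementary.)
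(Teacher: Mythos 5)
Your proposal is correct and follows essentially the same route as the paper: the same splitting of the kernel integral at $y=1$ (cancellation of the bracket near $y=0$ giving the extra factor of $y$ and hence $(rs)^{-n/2}$, crude growth plus exponential decay on $[1,\infty)$), followed by the same polar-coordinate reduction, substitution $a=s^{2}$, Hausdorff--Young/Plancherel in $t$, dyadic $r$-integration, and H\"older step using $q\ge p'$. The only differences are cosmetic (the paper bounds the tail by $e^{-r}r^{-1}$ via integration by parts rather than your polynomial-times-exponential bound, and your remark about the exponent being $\le -2$ is unnecessary since the $r$-integral is over a dyadic interval), so nothing further is needed.
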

\begin{proof}
We set
$$E(r):=\int_{0}^{\infty}e^{-ry}y^{\frac{n-4}{2}}[(y\pm2i)^{\frac{n-4}{2}}-
(\pm2i)^{\frac{n-4}{2}}]dy.$$
For $r\ge 1$, we first estimate $E(r)$ by repeating the proof of \cite[Lemma
3.11]{Stein-Weiss:1971:fourier-analysis} for readers' convenience.
\begin{align*}
|E(r)|\le&\int_0^1e^{-ry}y^{\frac{n-4}{2}}|(y\pm2i)^{\frac{n-4}{2}}-(\pm2i)^{\frac{n-4}{2}}|dy+\\
&+\int_{1}^{\infty}e^{-ry}y^{\frac{n-4}{2}}|(y+2i)^{\frac{n-4}{2}}-(2i)^{\frac{n-4}{2}}|dy\\
&=:I+II
\end{align*}
For $I$, where $0\le y\le 1$, by the mean value theorem we have
$|(y\pm2i)^{\frac{n-4}{2}}-(\pm2i)^{\frac{n-4}{2}}|\lesssim y$. For $II$, where $y\ge 1$, we take
$y$ out and then use the mean value theorem to obtain
$|(y\pm2i)^{\frac{n-4}{2}}-(\pm2i)^{\frac{n-4}{2}}|\lesssim y^{\frac{n-4}{2}}$. Then combining
these estimates above,
\begin{align*}
|E(r)|&\le\int_0^1e^{-ry}y^{\frac{n-2}{2}}dy+\int_{1}^{\infty}e^{-ry}y^{n-4}dy\\
&\lesssim r^{-n/2}\int_0^re^{-y}y^{\frac n2-1}dy+r^{-(n-3)}\int_r^{\infty}e^{-y}y^{n-4}dy.
\end{align*} By the definition of the Gamma function $\Gamma$,
$ r^{-n/2}\int_0^re^{-y}y^{\frac n2-1}dy\lesssim \Gamma(n/2)r^{-n/2}$. Then using integration by
parts $n-4$ times when $n\ge 4$ and $y^{-1}\le r^{-1}$ when $n=3$, we obtain $
r^{-(n-3)}\int_r^{\infty}e^{-y}y^{n-4}dy\lesssim e^{-r}r^{-1}$.

Since $e^{-r}r^{-1+\frac n2}$ is continuous on $[1,\infty)$ and decays to $0$ as $r\to\infty$,
$e^{-r}r^{-1}\lesssim r^{-\frac n2}$ holds for $r\ge 1$. Therefore
\begin{equation}\label{eq:error for J_m} |E(r)|\lesssim r^{-n/2}.
\end{equation}
Next let us turn to the estimate \eqref{eq:loc-3}. By changing to polar coordinates, the left-hand
side of  \eqref{eq:loc-3} is comparable to
\begin{equation} \label{eq:loc-2}
\left(\int_{R/2}^R\int_{\R}\left|\int_{I}F(s)s^{n-2}e^{-its^2\mp irs}E(rs)\,ds
\right|^qdt\,r^{n-2}dr\right)^{1/q},
\end{equation} where $I=[1,2]$. After changing variables $s=a^{1/2}$, \eqref{eq:loc-2} is
comparable to
\begin{equation*}
\left(\int_{R/2}^R\int_{\R}\left|\int_{I'}F(a^{1/2})a^{\frac{n-3}{2}}E(ra^{1/2})e^{\mp
ira^{1/2}}e^{-ita}\,da\right|^qdt\,r^{n-2}dr\right)^{1/q},
\end{equation*} where $I'=[1, \sqrt2]$. Then by the Hausdorff-Young inequality when $q>2$ or
the Plancherel theorem when $q=2$, changing variables back $a=s^2$ and $s\sim 1$, the left-hand
side of \eqref{eq:loc-2} is further majorized by \begin{equation*}
\left(\int_{R/2}^R\int_{\R}\left|\int_{I}\left|F(s)E(rs)\right|^{q'}ds
\right|^{q/q'}dt\,r^{n-2}dr\right)^{1/q}.
\end{equation*}
Since $rs\ge 1$ for any $r\in [R/2, R]$ and $s\in I$, \eqref{eq:error for J_m} and the H\"older
inequality give $R^{-\frac n2+\frac{n-1}{q}}\|F\|_{L^p(I)}$. Since $\|F\|_{L^p(I)}\sim
\|f\|_{L^p(S)}$, \eqref{eq:loc-3} follows.
\end{proof}

When $|x|\ge 1$, we are left with estimating the main term of $(fd\sigma)^{\vee}$,
\begin{equation} \label{prop: heur-four-bess}
\M f(t,x):=c_nr^{-\frac{n-2}{2}}\int_{I}F(s)s^{\frac{n-2}{2}} e^{i(\pm rs-ts^2)} ds,
\end{equation} where by $\pm$ we denote the summation of two terms. We call it \textit{the heuristic
approximation of $(fd\sigma)^{\vee}$}. We are going to prove the positive part ``estimates" of
Theorem \ref{thm:dyadic-lin} in the following four propositions. In the remainder of this section,
we will prove its negative part ``sharpness" by certain counterexamples.

\begin{proposition}[$q=2$ line]\label{prop: lin-q=2}
Suppose $f\in \L_1$. Then for $ q=2$, $ 2\le p\le \infty$ and $R\ge 2$, we have a sharp estimate
\begin{equation}\label{eq:linear q=2}
\|(fd\sigma)^{\vee}\|_{L^2_{t,x}}\lesssim R^{1/2}\|f\|_{L^p(S)}.
\end{equation}
\end{proposition}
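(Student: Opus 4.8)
The plan is to reduce everything to estimating the heuristic main term $\M f$ plus the error term $\E f$, and to observe that on the slab $\R\times A_R$ with $R\ge 2$ the $L^2$ norm is especially friendly because we can integrate in $t$ first using Plancherel. By the Fourier--Bessel formula (Lemma \ref{lem:Four-Bess}) we have $(fd\sigma)^{\vee}=\M f+\E f$, and Proposition \ref{prop: lin-err J_m} with $q=2$ already gives $\|\E f\|_{L^2_{t,x}}\lesssim R^{-\frac n2+\frac{n-1}{2}}\|f\|_{L^p(S)}=R^{-1/2}\|f\|_{L^p(S)}\lesssim R^{1/2}\|f\|_{L^p(S)}$ since $R\ge 2$, so the error term is harmless. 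Hence it suffices to prove $\|\M f\|_{L^2_{t,x}}\lesssim R^{1/2}\|f\|_{L^p(S)}$.

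For $\M f$, first I would pass to polar coordinates in $x$, so that
\[
\|\M f\|_{L^2_{t,x}}^2\sim \int_{R/2}^{R}\int_{\R}\left|r^{-\frac{n-2}{2}}\int_I F(s)s^{\frac{n-2}{2}}e^{i(\pm rs-ts^2)}\,ds\right|^2 dt\, r^{n-2}\,dr ,
\]
and the factor $r^{-\frac{n-2}{2}}$ cancels against $r^{\frac{n-2}{2}}$ coming from $r^{n-2}\,dr$ after taking the square, leaving $\int_{R/2}^{R}\int_{\R}\big|\int_I F(s)s^{\frac{n-2}{2}}e^{i(\pm rs-ts^2)}\,ds\big|^2\,dt\,dr$. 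Next, for fixed $r$, change variables $s\mapsto a=s^2$ (with $s\sim 1$ on $I=[1,2]$, so the Jacobian and the weight $s^{(n-2)/2}$ are both $\sim 1$), turning the inner $s$-integral into the Fourier transform in $t$ of an $L^2_a$-function of comparable norm, with an extra harmless modulation $e^{\pm i r\sqrt a}$. By Plancherel in $t$, the $t$-integral of the square is $\sim \int_{I'}|F(a^{1/2})|^2\,da\sim \|F\|_{L^2(I)}^2$, uniformly in $r$. Then integrating the constant over $r\in[R/2,R]$ yields $\|\M f\|_{L^2_{t,x}}^2\lesssim R\,\|F\|_{L^2(I)}^2$, i.e.\ $\|\M f\|_{L^2_{t,x}}\lesssim R^{1/2}\|F\|_{L^2(I)}$. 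Finally, since $I$ has finite length and $p\ge 2$, H\"older gives $\|F\|_{L^2(I)}\lesssim\|F\|_{L^p(I)}\sim\|f\|_{L^p(S)}$, which completes the estimate \eqref{eq:linear q=2}.

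For sharpness I would test against a function concentrated so that the phase in $\M f$ is essentially stationary on a large portion of the slab: take $F(s)=1_{[1,2]}(s)$ (or the modulated version $F(s)=1_{[1,2]}(s)e^{it_0 s^2}$ as in Proposition \ref{prop: sml-err}) and restrict attention to $t\sim r$ in the region $R/4\le r\le R/2$, where the linear phase $rs-ts^2$ has a critical point in $s\in I$. On such a set of $(t,r)$ of measure $\sim R^2$ one expects $|\M f(t,x)|\gtrsim r^{-\frac{n-2}{2}}\cdot r^{-1/4}$ by stationary phase in $s$, so after accounting for the weight $r^{n-2}$ in polar coordinates the contribution to $\|(fd\sigma)^{\vee}\|_{L^2_{t,x}}^2$ is $\gtrsim R^2\cdot R^{-(n-2)}R^{-1/2}\cdot R^{n-2}=R^{3/2}$; one must also check the error term $\E f$ does not cancel this, using $|\E f|\lesssim R^{-n/2}\cdot$(mild factors)$\ll$ main term on this set. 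Wait — the clean bound $R^{1/2}$ squared is $R$, so I would instead localize to $t$ within $O(1)$ of the stationary value for each $r$, giving a set of measure $\sim R$ and matching $\|(fd\sigma)^{\vee}\|_{L^2}\gtrsim R^{1/2}\sim R^{1/2}\|f\|_{L^p(S)}$ (note $\|f\|_{L^p(S)}\sim 1$ for this test function). The main obstacle is the sharpness direction: one has to choose the test function and the space-time region carefully so that the stationary-phase lower bound for $\M f$ genuinely survives and is not destroyed by interference with $\E f$, which is where the stationary/non-stationary phase bookkeeping (as flagged in the introduction, cf.\ Example \ref{ex:II-larger}) has to be done with care.
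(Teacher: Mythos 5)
Your proof of the estimate itself is correct and is essentially the paper's argument: decompose via Lemma \ref{lem:Four-Bess} into $\M f+\E f$, dispose of $\E f$ by Proposition \ref{prop: lin-err J_m}, cancel $r^{-(n-2)/2}$ against the polar weight $r^{n-2}$, apply Plancherel in $t$ after the substitution $a=s^{2}$, integrate the resulting constant over $r\in[R/2,R]$, and finish with H\"older. Nothing to add there.

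The sharpness discussion, however, contains a genuine error. For the phase $\phi(s)=rs-ts^{2}$ with a nondegenerate interior critical point and $|\phi''|=2|t|\sim R$, stationary phase gives $\bigl|\int_{I}e^{i\phi(s)}\,ds\bigr|\sim R^{-1/2}$, not $R^{-1/4}$; the correct pointwise bound is therefore $|\M f|\gtrsim r^{-(n-2)/2}R^{-1/2}$ at stationary points. With this, the contribution of the stationary region $\{(t,r):\ r\in[R/2,R],\ r/(2t)\in(1,2)\}$, which has measure $\sim R^{2}$, is $R^{2}\cdot R^{-(n-2)}R^{-1}\cdot R^{n-2}=R=(R^{1/2})^{2}$ --- there is no contradiction with the upper bound and hence nothing to repair. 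Your repair, localizing $t$ to an $O(1)$ window for each $r$, actually destroys the example: on a set of measure $\sim R$ the same computation only yields $\|(fd\sigma)^{\vee}\|_{L^{2}_{t,x}}^{2}\gtrsim R\cdot R^{-1}=1$, a lower bound of $1$ rather than $R^{1/2}$. The paper's Example \ref{ex:lin-II} runs exactly the full-region stationary-phase computation, using the modulated test function $F(|\xi|)=|\xi|^{-(n-2)/2}e^{-ir_{0}|\xi|+it_{0}|\xi|^{2}}1_{\{1\le|\xi|\le2\}}$ so that the critical point sits inside $I$ on the chosen region while the $-$ sign branch of $\M f$ is $O_{N}(R^{-N})$ by non-stationary phase; some such bookkeeping for the two branches is needed in your version as well, though your unmodulated choice can be made to work for $t>0$ since the $-$ branch then has no critical point in $I$.
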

\begin{proof}
We observe that it is sufficient to estimate the term of $\M f$ with $+$ sign. In the propositions
followed, we will make the same reduction unless specified. Hence by the heuristic approximation
\eqref{prop: heur-four-bess} of $(fd\sigma)^{\vee}$ with $+$ sign, changing variables and then the
Plancherel theorem in $t$ followed by the H\"older inequality,
\begin{align*}
\|(fd\sigma)^{\vee}\|_{L^2_{t,x}}&=\left(\int_{R/2\le |x|\le R}\int_{\R}
|(fd\sigma)^{\vee}(t,x)|^2 d\,t\, d\,x \right)^{1/2}\\
&\sim \left(\int_{R/2}^R\int_{\R}\left|r^{-\frac{n-2}{2}}\int_{I}F(s)s^{\frac{n-2}{2}}
e^{i[rs-ts^2]}ds\right|^2dt\, r^{n-2}dr\right)^{1/2}\\
&=\left(\int_{R/2}^R\int_{\R}\left|\int_{I}F(s)s^{\frac{n-2}{2}}e^{irs}e^{-ts^2}ds\right|^2
dt\, dr\right)^{1/2}\\
&=\left(\int_{R/2}^R\|F\|^2_{L^2_s(I)}dr\right)^{1/2}\lesssim R^{1/2}\|f\|_{L^2(S)}\lesssim
R^{1/2}\|f\|_{L^p(S)},
\end{align*} where $I=[1,2]$, $2\le p\le \infty$. Hence \eqref{eq:linear q=2} follows.
\end{proof}

Now let us deal with the estimates on the line $q=4$. The estimate $L^4\to L^4$ is the endpoint of
two dimensional ($n=2$) linear adjoint restriction conjecture and hence the classical $TT^*$
approach, namely the Carleson-Sj\"olin argument used in Proposition \ref{prop:lin-q=3p'},
unfortunately fails because we can not apply the Hardy-Littlewood-Sobolev inequality at one step.
Instead, we can perform a Whitney-type decomposition to $I$ to create some frequency separation.
Similar arguments can be found in \cite{Tao:2003:paraboloid-restri},
\cite{Tao-Vargas:2000:cone-2}, \cite{Tao-Vargas-Vega:1998:bilinear-restri-kakeya}.

\begin{proposition}[$q=4$ line]\label{prop:lin-q=4}
Suppose $f\in \L_1$. Then for $q=4$, $4\le p\le \infty$ and $\eps>0$ and $R\ge 2$, we have a sharp
estimate up to $R^{\eps}$,
\begin{equation}\label{eq:lin-q=4}
\|(fd\sigma)^{\vee}\|_{L^4_{t,x}}\lesssim_{\eps} R^{-(n-2)/4+\eps}\|f\|_{L^p(S)}.
\end{equation}
\end{proposition}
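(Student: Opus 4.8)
The plan is to reduce, as in the previous propositions, to the heuristic approximation $\M f$ with $+$ sign, so that after passing to polar coordinates we must bound
\[
\Big(\int_{R/2}^R\int_{\R}\Big|r^{-\frac{n-2}{2}}\int_I F(s)s^{\frac{n-2}{2}}e^{i(rs-ts^2)}\,ds\Big|^4 dt\, r^{n-2}\,dr\Big)^{1/4}.
\]
Pulling out $r^{-\frac{n-2}{2}}$ and absorbing the surviving power of $r$ from the measure, this is comparable to $R^{-(n-2)/4}$ times a quantity that, for each fixed $r\sim R$, looks like an $L^4_t$-norm of a one-dimensional extension operator for the curve $s\mapsto(s,s^2)$ restricted to $I=[1,2]$, but with the oscillating weight $e^{irs}$ present. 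The point of the $R^{\eps}$ loss is that we cannot run the clean $TT^*$/Hardy--Littlewood--Sobolev argument of Proposition \ref{prop:lin-q=3p'} here (that is exactly the borderline failure at the $n=2$ endpoint), so instead I would perform a Whitney decomposition of $I\times I$ into pairs of dyadic subintervals $I_1,I_2\subset I$ of comparable length $\ell\in 2^{-\N}$ that are separated by a distance $\sim\ell$.

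Concretely, I would write $|\M f|^2$ after squaring as a sum over Whitney pairs of bilinear pieces $\M_{I_1}f\,\M_{I_2}f$, so that $\|\M f\|_{L^4}^2=\|\M f\cdot\overline{\M f}\|_{L^2}\lesssim \sum_{\ell}\sum_{I_1\sim I_2}\|\M_{I_1}f\,\overline{\M_{I_2}f}\|_{L^2_{t,x}}$ (with a square-function/orthogonality step to convert the sum of $L^2$ norms into an $\ell^2$ sum at the cost of $R^\eps$; the almost-orthogonality in $t$ comes from the fact that the product $\M_{I_1}f\,\overline{\M_{I_2}f}$ has $t$-frequency supported in essentially disjoint regions for different scales). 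For each fixed Whitney pair, I would change variables $s=a^{1/2}$ on each interval as in Proposition \ref{prop: lin-err J_m}, so the two resulting phases $e^{-ita}$ live on separated $a$-intervals, and then use Plancherel in $t$. The transversality/separation of $I_1$ and $I_2$ on the parabola means the bilinear object behaves like a convolution of two pieces of a measure whose supports meet transversally, giving a gain of a power of $\ell$ and a favorable power of $R$; summing the resulting geometric series in $\ell$ converges (again possibly with an $R^\eps$), and tracking the $r$-integral over $r\sim R$ of $r^{n-2}\,dr$ against the $r^{-\frac{n-2}{2}\cdot 4}=r^{-2(n-2)}$ weight produces exactly $R^{-(n-2)/4+\eps}$ after taking the fourth root. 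Finally I would pass from $L^4(I)$ back to $L^p(I)$ for $4\le p\le\infty$ by H\"older on the bounded interval $I$, and use $\|F\|_{L^p(I)}\sim\|f\|_{L^p(S)}$, together with Propositions \ref{prop: lin-err J_m} and \ref{prop: sml-err} to handle the error term $\E f$ and the region $|x|\le 1$ (the latter not needed here since $R\ge 2$, but the continuity remark matches it up).

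The main obstacle is the square-function/orthogonality step that converts $\sum_{\ell,I_1\sim I_2}\|\M_{I_1}f\,\overline{\M_{I_2}f}\|_{L^2}$ into something summable: one has to verify that, for fixed $r\sim R$, the space-time (really $t$-) frequency supports of the Whitney bilinear pieces at different scales $\ell$ (and different pairs within a scale) overlap with only bounded multiplicity, so that Plancherel can be applied with only a logarithmic, i.e.\ $R^\eps$, loss; the oscillating weight $e^{irs}$ coming from the Bessel main term is what forces us to be careful, since it is precisely the interaction of that linear phase with the quadratic phase $e^{-its^2}$ that we must exploit to see the frequency separation. Quantifying the resulting gain in $\ell$ and checking that it is summable against the number $\sim\ell^{-1}$ of Whitney pairs at scale $\ell$ is the crux of the estimate; everything else is the bookkeeping of powers of $R$ already rehearsed in Propositions \ref{prop: lin-q=2} and \ref{prop: lin-err J_m}.
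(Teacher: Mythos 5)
Your overall architecture --- reduce to the main term $\M f$, pull out $R^{-(n-2)/4}$, and run a bilinear Whitney decomposition of $I\times I$ with an almost-orthogonality step and Plancherel in $t$ --- is exactly the route the paper takes. But there is a genuine gap in how you close the sum over Whitney scales. At $L^4$ the bilinear gain from a separated pair of intervals of length $\ell=2^{-j}$ is exactly critical: the convolution bound $\|d\sigma^j_k*d\sigma^j_{k'}\|_{L^\infty}\lesssim 2^j$ combined with H\"older ($\|F^j_k\|_{L^2}\le 2^{-j/4}\|F^j_k\|_{L^4}$) makes every scale contribute a full $O(\|F\|_{L^4}^2)$, with no decay in $\ell$. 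So your claim that ``summing the resulting geometric series in $\ell$ converges'' is false as stated; summed over all $j\ge 0$ the bound diverges. Moreover the $R^\eps$ does not come from the orthogonality step, which is essentially lossless (quasi-orthogonality of the bilinear pieces within each fixed scale, as in Tao--Vargas--Vega), but from the fact that only $O(\log R)$ scales can be treated this way.

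The missing ingredient is the truncation of the Whitney sum at $\ell\sim 1/R$, which is precisely where the restriction to the annulus $A_R$ enters. For $2^j\le R$ one sums the $O(1)$-per-scale contributions over the $O(\log R)=O_\eps(R^\eps)$ scales; this is the source of the $\eps$-loss. For $2^j\ge R$ the convolution/orthogonality argument is abandoned entirely: each pair is estimated by $\|(F^j_{k'}d\sigma)^{\vee}\|_{L^\infty}\lesssim 2^{-j/2}\|F^j_{k'}\|_{L^2}$ together with $\|(F^j_kd\sigma)^{\vee}\|_{L^2_{t,r}(\R\times A_R)}\lesssim R^{1/2}\|F^j_k\|_{L^2}$ (Plancherel in $t$ only, then integrating $r$ over $[R/2,R]$), so the contribution of scale $j$ is $\lesssim R^{1/2}2^{-j/2}\|F\|_{L^2}^2$, and \emph{this} is the convergent geometric series. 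Without this split the argument does not close. The remaining bookkeeping in your proposal (reduction to $p=4$ by H\"older on $I$, handling the error term via Proposition \ref{prop: lin-err J_m}, and $\|F\|_{L^p(I)}\sim\|f\|_{L^p(S)}$) matches the paper.
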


\begin{proof}
By the same reasoning as that used in the proof of Proposition \ref{prop: lin-q=2}, we will only
prove the estimate when $p=4$. By the heuristic approximation \eqref{prop: heur-four-bess} of
$(fd\sigma)^{\vee}$ with $+$ sign,
$$\|(fd\sigma)^{\vee}\|^2_{L^4_{t,x}} \sim R^{-(n-2)/2}\left(\int_{R/2}^R\int_{\R}
\left|\int_{I}F(s)s^{\frac{n-2}{2}}e^{i(rs-ts^2)}ds\right|^4dt\,dr\right)^{1/2},$$ where
$I=[1,2]$.

We set $(Fd\sigma)^{\vee}(t,r):=\int_{I}F(s)s^{\frac{n-2}{2}}e^{i[rs-ts^2]}ds$, which can be
regarded as the inverse space-time Fourier transform of $f(\tau, \xi)|\xi|^{\frac{n-2}{2}}$
restricted to the parabola. To prove \eqref{eq:lin-q=4}, it suffices to prove that, for any
$\eps>0$,
\begin{equation}\label{eq:loc-33}
\left(\int_{R/2}^R\int_{\R}\left|(Fd\sigma)^{\vee}(Fd\sigma)^{\vee}\right|^2d\,t\,dr\right)^{1/2}
\lesssim_{\eps} R^{\eps}\|F\|^2_{L^4}.
\end{equation}
Next we will perform a Whitney-type decomposition to $I=[1,2]$. For each $j\ge 0$ we break up $I$
into $O(2^j)$ dyadic intervals $\tau^j_k$ of length $2^{-j}$, and write $\tau^j_k \backsimeq
\tau^j_{k'}$ if $\tau^j_k$ and $\tau^j_{k'}$ are not adjacent but have adjacent parents. For each
$j\ge 0$, let $F=\sum F^j_k$ where $F^j_k=F 1_{\tau^j_k}$. Then
$$(Fd\sigma)^{\vee}(Fd\sigma)^{\vee}=\sum_{j}\sum_{k,k': \tau^j_k \backsimeq \tau^j_{k'} }
(F^j_k d\sigma)^{\vee}(F^j_{k'} d\sigma)^{\vee}.$$ From the triangle inequality, the left-hand
side of \eqref{eq:loc-33} is bounded by
\begin{align*}
 \sum _{2^j\le R}&\|\sum_{k,k': \tau^j_k \backsimeq \tau^j_{k'}}(F^j_k d\sigma)^{\vee}
 (F^j_{k'} d\sigma)^{\vee}\|_{L^2_{t,r}(\R\times \R)}+\\
 &+\sum _{2^j\ge R}\sum_{k,k': \tau^j_k\backsimeq \tau^j_{k'}}
 \|(F^j_k d\sigma)^{\vee}(F^j_{k'} d\sigma)^{\vee}\|_{L^2_{t,r}(\R\times A_R)}=:A+B.
\end{align*}
We will first estimate $A$. By the quasi-orthogonality property of functions among $(F^j_k
d\sigma)^{\vee}(F^j_{k'} d\sigma)^{\vee}$\cite[Lemma
6.1]{Tao-Vargas-Vega:1998:bilinear-restri-kakeya},
$$A\lesssim \sum _{2^j\le R}\left(\sum_{k,k': \tau^j_k \backsimeq \tau^j_{k'}}\|(F^j_k d\sigma)^{\vee}
(F^j_{k'} d\sigma)^{\vee}\|^2_{L^2_{t,r}(\R\times \R)}\right)^{1/2}.$$ By using the Plancherel
theorem and the Cauchy-Schwarz inequality and $s_i\sim 1$ for $i=1,2$,
$$\|(F^j_k d\sigma)^{\vee}(F^j_{k'} d\sigma)^{\vee}\|^2_{L^2_{t,r}(\R\times \R)}
\lesssim\|F^j_k\|^2_{L^2(I_k^j)}\|F^j_{k'}\|^2_{L^2(I^{j}_{k'})}
\|d\sigma^j_k*d\sigma^j_{k'}\|_{L^{\infty}},$$ where $d\sigma^j_k$ and $d\sigma^j_{k'}$ are two
arc measures of the parabola $\{\tau=|\xi|^2\}$ in $\R\times \R$ supported on $\tau^j_k$ and
$\tau^j_{k'}$, respectively.

On the one hand, from the geometric properties of the paraboloid,
$$\|d\sigma^j_k*d\sigma^j_{k'}\|_{L^{\infty}}\lesssim 2^j.$$
On the other hand, by the H\"older inequality, $\|F^j_k\|_{L^2(I_k^j)}\le
2^{-j/4}\|F^j_k\|_{L^4(I^j_{k})}$. Thus after combining these estimates,
$$A\lesssim \sum_{2^j\le R}\left(\sum_{k,k':\tau^j_k\backsimeq
\tau^j_{k'}}\|F^j_k\|^2_{L^4(I_k^j)}\|F^j_{k'}\|^2_{L^4(I_{k'}^j)}\right)^{1/2}.$$ We also observe
that for each $k$, there are $O(1)$ $k'$ such that $\tau^j_k\backsimeq \tau^j_{k'}$. Hence by the
Cauchy-Schwarz inequality, for any $\eps>0$, $$A\lesssim (\log{R})\|F\|^2_{L^4}\lesssim_{\eps}
R^{\eps} \|F\|^2_{L^4}.$$ Next let us estimate $B$. On the one hand, by the Cauchy-Schwarz
inequality,
$$\|(F^j_{k'} d\sigma)^{\vee}\|_{L^{\infty}_{t,x}}\lesssim 2^{-j/2}\|F^j_{k'}\|_{L^2(\tau^{j}_{k'})}.$$
On the other hand, by the Plancherel theorem in $t$,
$$\|(F^j_k d\sigma)^{\vee}\|_{L^2_{t,r}(\R\times A_R)}=\left(\int_{R/2}^R\int_{\R}
|(F^j_k d\sigma)^{\vee}|^2 dt\,dr\right)^{1/2}\lesssim R^{1/2}\|F^j_{k}\|_{L^2(\tau^{j}_{k})}.$$
Since there are $O(1)$ $k'$ such that $\tau^j_k\backsimeq \tau^j_{k'}$ for each $k$, by using the
Cauchy-Schwarz inequality,
$$B\lesssim R^{1/2}\sum_{2^j\ge R}\sum_{k,k':\tau^j_k\backsimeq \tau^j_{k'}} 2^{-j/2}
\|F^j_{k'}\|_{L^2(\tau^{j}_{k'})}\|F^j_{k}\|_{L^2(\tau^{j}_{k})}\lesssim R^{1/2} \sum_{2^j\ge
R}2^{-j/2} \|F\|^2_{L^2}.$$ Thus summing in $j$ and using the H\"older inequality,
\eqref{eq:loc-33} follows.
\end{proof}

In contrast to the proof of the estimate $L^4\to L^4$ in Proposition \ref{prop:lin-q=4}, the
estimates $L^p\to L^q$ when $q=3p'$ and $1\le p<4$ can be proven by the Carleson-Sj\"olin argument
or equivalently the $TT^*$ method. Such arguments can also be used to prove the non-endpoint
Strichartz estimates as in \cite{Keel-Tao:1998:endpoint-strichartz}.

\begin{proposition}[$q=3p'$ line]\label{prop:lin-q=3p'}
Suppose $f\in \L_1$. Then for $1\le p< 4$, $q=3p'$ and $R\ge 2$, we have a sharp estimate
\begin{equation}\label{eq:linear q=3p'}
\|(fd\sigma)^{\vee}\|_{L^q_{t,x}}\lesssim R^{(n-2)(1/q-1/2)}\|f\|_{L^p(S)}.
\end{equation}
\end{proposition}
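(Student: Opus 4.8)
The plan is to run the same reduction as in Propositions \ref{prop: lin-q=2} and \ref{prop:lin-q=4}: peel off the genuine error term and recognize the remaining main term $\M f$ as a one–dimensional Fourier extension operator for the parabola on the slab $\R\times[R/2,R]$. Concretely, I would start from the Fourier--Bessel formula (Lemma \ref{lem:Four-Bess}), write $(fd\sigma)^{\vee}=\M f+\E f$, and dispose of $\E f$ using Proposition \ref{prop: lin-err J_m}: since $q=3p'>4\ge 1$ one has $-\frac n2+\frac{n-1}{q}\le (n-2)\big(\frac1q-\frac12\big)$, so $\|\E f\|_{L^q_{t,x}}\lesssim R^{-n/2+(n-1)/q}\|f\|_{L^p(S)}$ is already dominated by the claimed bound. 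By the $t\mapsto -t$ symmetry it then suffices to estimate the $+$ term of $\M f$ in \eqref{prop: heur-four-bess}.

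Next I would pass to polar coordinates $x=r\omega$ and pull the radial weight $r^{-\frac{n-2}{2}}\sim R^{-\frac{n-2}{2}}$ and the polar Jacobian $r^{n-2}\sim R^{n-2}$ out of the integral, which gives
\[
\|\M f\|_{L^q_{t,x}(\R\times A_R)}\ \sim\ R^{-\frac{n-2}{2}+\frac{n-2}{q}}\,\Big(\int_{R/2}^{R}\int_{\R}\big|(Fd\sigma)^{\vee}(t,r)\big|^{q}\,dt\,dr\Big)^{1/q},
\]
where $(Fd\sigma)^{\vee}(t,r):=\int_{I}F(s)s^{\frac{n-2}{2}}e^{i(rs-ts^2)}\,ds$ is exactly the space–time Fourier extension of $F(s)s^{\frac{n-2}{2}}$ off the parabola $\{\tau=s^2\}$ in $\R\times\R$ (the amplitude $s^{\frac{n-2}{2}}$ being smooth and comparable to $1$ on $I=[1,2]$), equivalently a free one–dimensional Schr\"odinger solution. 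Since $\R\times[R/2,R]\subset\R^{2}$, I would simply bound the inner double integral by the global one and reduce the whole statement to the $R$–independent parabola restriction estimate
\[
\|(Fd\sigma)^{\vee}\|_{L^{q}_{t,r}(\R\times\R)}\ \lesssim\ \|F\|_{L^{p}(I)},\qquad q=3p',\ \ 1\le p<4 ,
\]
after which $R^{-\frac{n-2}{2}+\frac{n-2}{q}}=R^{(n-2)(1/q-1/2)}$ and $\|F\|_{L^{p}(I)}\sim\|f\|_{L^{p}(S)}$ close out \eqref{eq:linear q=3p'}.

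This last inequality is the classical sharp adjoint restriction estimate for the parabola in two dimensions on the critical line $q=3p'$, $q>4$. For $q\ge 6$ (that is, $1\le p\le 2$) I would get it by interpolating the diagonal endpoint $\|(Fd\sigma)^{\vee}\|_{L^{6}_{t,r}}\lesssim\|F\|_{L^{2}(I)}$ — the one–dimensional Strichartz/restriction estimate, which comes from the $TT^{*}$ method together with the dispersive bound $\|e^{it\partial_x^{2}}\|_{L^{1}\to L^{\infty}}\lesssim|t|^{-1/2}$ and Hardy--Littlewood--Sobolev in the time variable, exactly the non-endpoint argument of \cite{Keel-Tao:1998:endpoint-strichartz} — against the trivial bound $\|(Fd\sigma)^{\vee}\|_{L^{\infty}_{t,r}}\lesssim\|F\|_{L^{1}(I)}$; for $4<q<6$ (that is, $2<p<4$) it is the remaining non-$L^{2}$ part of the Carleson--Sj\"olin/oscillatory-integral estimate (\cite{Fefferman-Stein:1971:maxi-inequalities,Carleson-Sjolin:1972:oscill-multipl-disc,Zygmund:1974,Stein:1993}). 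I expect this $4<q<6$ range to be the only genuine obstacle: as noted just before Proposition \ref{prop:lin-q=4}, here the bare $TT^{*}$ argument fails, because applied to the $O((1+|\cdot|)^{-1/2})$ decay of $(d\sigma)^{\vee}$ the inequalities of Young or Hardy--Littlewood--Sobolev only reach $q\ge 8$, so one must genuinely exploit the curvature of the parabola through the full Carleson--Sj\"olin estimate. Everything else is the bookkeeping above; the sharpness of \eqref{eq:linear q=3p'} is deferred to the counterexamples constructed later in this section.
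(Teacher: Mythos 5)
Your reduction is correct and, up to the final step, is exactly the paper's: dispose of $\E f$ via Proposition \ref{prop: lin-err J_m} (your exponent comparison $-\tfrac n2+\tfrac{n-1}{q}\le (n-2)(\tfrac1q-\tfrac12)$ is right, being equivalent to $\tfrac1q\le 1$), pass to polar coordinates, pull out $r^{-\frac{n-2}{2}}r^{\frac{n-2}{q}}\sim R^{(n-2)(\frac1q-\frac12)}$, and reduce to the one-dimensional extension estimate $\|(Fd\sigma)^{\vee}\|_{L^{q}_{t,r}}\lesssim\|F\|_{L^p(I)}$ on $q=3p'$. Where you diverge is in how that last estimate is obtained: you enlarge $\R\times[R/2,R]$ to $\R^2$ and invoke the classical two-dimensional adjoint restriction theorem on the critical line (Fefferman--Stein, Carleson--Sj\"olin, Zygmund), supplying a self-contained Strichartz-interpolation proof only for $q\ge 6$. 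The paper instead reproves the estimate from scratch for the whole range $4<q$: square $(Fd\sigma)^{\vee}$, change variables $(s_1,s_2)\mapsto(s_1+s_2,s_1^2+s_2^2)$ with Jacobian $2(s_2-s_1)$, apply Hausdorff--Young with $q=2r'$, and finish with H\"older and Hardy--Littlewood--Sobolev applied to the Riesz potential $I_{2-r}(|F|^r)$. Both routes are valid; the paper's buys self-containedness and a uniform treatment of $4<q<6$ and $q\ge6$ alike, and the same squaring device is recycled in the bilinear section, so it is worth internalizing. One framing quibble: what fails for $4<q<6$ is not the $TT^*$/Carleson--Sj\"olin argument itself --- the paper's squaring argument is precisely a $TT^*$ argument and covers $4<q<6$ --- but rather the cruder version that only uses the $|(t,x)|^{-1/2}$ decay of $(d\sigma)^{\vee}$; the genuine failure of the HLS step occurs only at the endpoint $q=4$, which is why Proposition \ref{prop:lin-q=4} needs the Whitney decomposition instead.
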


\begin{proof}
By the heuristic approximation \eqref{prop: heur-four-bess} with $+$ sign,
\begin{equation*}
\|(fd\sigma)^{\vee}\|_{L^q_{t,x}}\sim R^{(n-2)(\frac1q-\frac12)}
\left(\int_{R/2}^R\int_{\R}\left|\int_ {I}F(s)s^{\frac{n-2}{2}}e^{i(rs-t s^2)}ds
\right|^qdt\,dr\right)^{1/q},
\end{equation*} where $I=[1,2]$.

Setting $(Fd\sigma)^{\vee}(t,r):=\int_{I}F(s)s^{\frac{n-2}{2}} e^{i(rs -ts^2)}ds $, we see that it
suffices to prove
\begin{equation}\label{eq:loc}
\left(\int_{R/2}^R\int_{\R}\left|\int_{I}F(s)s^{\frac{n-2}{2}}e^{i(rs-ts^2)}ds\right|^qdt\,dr
\right)^{1/q}\lesssim\|F\|_{L^p(S)}.
\end{equation}
Squaring $(Fd\sigma)^{\vee}$, we obtain
$$\{(Fd\sigma)^{\vee}(t,r)\}^2=\int_{I\times I}F(s_1)F(s_2)(s_1s_2)^{\frac{n-2}{2}}
e^{i(r\cdot (s_1+s_2)-t(s_1^2+s_2^2))}ds_1 ds_2,$$ which is an oscillatory integral with a phase
function $r(s_1+s_2)-t(s_1^2+s_2^2)$. Its Hessian is $2(s_2-s_1)$ which vanishes when $s_1=s_2$.
But we can make a change of variables $(s_1,s_2)\to (a, b)$ with $a=s_1+s_2,\,b=s_1^2+s^2_2$. It
is easy to see that the Jacobian is $2(s_2-s_1)$. Let $\Omega$ be the image in $\R\times \R$ of
$I\times I$ under such change of variables. Then $\{(Fd\sigma)^{\vee}(t,r)\}^2=\int_{\Omega}
\tilde{F}(a,b)e^{i(ra-tb)}da\,db$, where
$\tilde{F}(a,b)={F(s_1)F(s_2)(s_1s_2)^{\frac{n-2}{2}}}/{|s_1-s_2|}$ is a function of $s_1$ and
$s_2$.

Setting $q=2r'$. Since $r'>2$, by the Hausdorff-Young inequality and $s_i\sim 1$ for $i=1,2$,
\begin{align*}
\|(Fd\sigma)^{\vee}(t,r)\|_{L^q(\R\times A_R)}^{2}
&\le\|\{(Fd\sigma)^{\vee}(t,r)\}^2\|_{L^{r'}(\R\times \R)}\\
&\le\left(\int_{\Omega}|\tilde{F}(s_1,s_2)|^r ds_1ds_2\right)^{1/r}\\
&\sim \left(\int_{I^2}|F(s_1)|^r|F(s_2)|^r\frac {1}{|s_1-s_2|^{r-1}}ds_1 ds_2\right)^{1/r}\\
&=\left(\int_{I}|F(s_1)|^r\int_{I}|F(s_2)|^r\frac  {1}{|s_1-s_2|^{1-(2-r)}}ds_2
ds_1\right)^{1/r}\\
&\sim\left(\int_{I}|F(s_1)|^rI_{2-r}(|F|^r)(s_1)ds_1\right)^{1/r},
\end{align*}
where $I_{2-r}$ is the Riesz potential of order $2-r$ defined via the spatial Fourier transform by
$\widehat{I_sf}(\xi)=|\xi|^{-s}\hat{f}(\xi)$. Since $\|f\|_{L^p}\sim \|F\|_{L^p(I)}$, it then
suffices to prove that
$$\left(\int_{I}|F|^r I_{2-r}(|F|^r) ds_1\right)^{1/r}\lesssim \|F\|_{L^p(I)}^2.$$
By the H\"{o}lder inequality, we obtain
$$\int_{I}|F|^r I_{2-r}(|F|^r) ds_1\le \||F|^r\|_{L^{p/r}(I)}\|I_{2-r}(|F|^r)\|
_{L^{1/(1-r/p)}(I)}.$$ Since $\||F|^r\|_{L^{p/r}(I)}^{1/r}=\|F\|_{L^p(I)}$, it suffices to show
$\|I_{2-r}(|F|^r)\|_{L^{1/(1-r/p)}}\lesssim \||F|^r\|_{L^{p/r}(I)}$, which will follow from the
Hardy-Littlewood-Sobolev inequality. Hence the inequality \eqref{eq:loc} follows.
\end{proof}

\begin{proposition}[$q=\infty$ line]\label{prop:lin-q=infty}
Suppose $f\in \L_1$. Then for $q=\infty$, $1\le p\le\infty$ and $R\ge 2$, we have a sharp estimate
\begin{equation}\label{eq:linear q=infty}
\|(fd\sigma)^{\vee}\|_{L^{\infty}_{t,x}}\lesssim R^{-(n-2)/2}\|f\|_{L^p(S)}.
\end{equation}
\end{proposition}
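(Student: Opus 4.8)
The plan is to run the same split as in Proposition~\ref{prop: lin-q=2}: write $(fd\sigma)^{\vee}=\M f+\E f$ via the Fourier--Bessel formula of Lemma~\ref{lem:Four-Bess}, and bound the heuristic main term $\M f$ and the error term $\E f$ separately. Since $q=\infty\ge\max\{2,p'\}$ for every $1\le p\le\infty$, Proposition~\ref{prop: lin-err J_m} applies with $q=\infty$ and gives
\[
\|\E f\|_{L^{\infty}_{t,x}}\lesssim R^{-n/2}\|f\|_{L^{p}(S)}\lesssim R^{-(n-2)/2}\|f\|_{L^{p}(S)},\qquad R\ge 2,
\]
so the error term is already stronger than what \eqref{eq:linear q=infty} asks for and may be discarded.

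For the main term I would use no oscillation at all. On the annulus $A_R$ we have $r=|x|\in[R/2,R]$, so the geometric weight satisfies $r^{-(n-2)/2}\sim R^{-(n-2)/2}$; bounding the $s$-integral in \eqref{prop: heur-four-bess} by its absolute value and using $s^{(n-2)/2}\lesssim 1$ on $I=[1,2]$ gives
\[
|\M f(t,x)|\lesssim R^{-(n-2)/2}\int_{I}|F(s)|\,ds=R^{-(n-2)/2}\|F\|_{L^{1}(I)}.
\]
Since $I$ is a bounded interval, H\"older's inequality yields $\|F\|_{L^{1}(I)}\lesssim\|F\|_{L^{p}(I)}$ for all $1\le p\le\infty$, and $\|F\|_{L^{p}(I)}\sim\|f\|_{L^{p}(S)}$; combining this with the bound on $\E f$ proves \eqref{eq:linear q=infty}.

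I do not expect a genuine obstacle on this line. In contrast with the $q=2$, $q=4$ and $q=3p'$ cases, the decay $R^{-(n-2)/2}$ comes entirely from the Bessel asymptotics (the factor $r^{-(n-2)/2}$ in Lemma~\ref{lem:Four-Bess}) rather than from stationary phase in the variable $s$, so no cancellation in the phase $\pm rs-ts^{2}$ is needed and a crude triangle-inequality estimate suffices. The only point that must be verified is that the error term does not spoil the gain, and this is exactly Proposition~\ref{prop: lin-err J_m} evaluated at $q=\infty$, which produces $R^{-n/2}\le R^{-(n-2)/2}$ for $R\ge 2$. Sharpness of \eqref{eq:linear q=infty} is, as announced, left to the counterexamples constructed later in the section and is not part of this argument.
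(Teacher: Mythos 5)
Your proposal is correct and is essentially the paper's own argument: the error term is disposed of by Proposition \ref{prop: lin-err J_m} at $q=\infty$ (yielding $R^{-n/2}\le R^{-(n-2)/2}$), and the main term is bounded by the trivial triangle-inequality estimate $|\M f|\lesssim R^{-(n-2)/2}\|F\|_{L^1(I)}$ followed by H\"older on the compact interval $I$. No stationary phase is needed on this line, exactly as you observe.
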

\begin{proof}
By the heuristic approximation \eqref{prop: heur-four-bess} of $(fd\sigma)^{\vee}$ with $+$ sign
and the H\"older inequality, for any $p\ge 1$,
\begin{equation*}
\|(fd\sigma)^{\vee}\|_{L^{\infty}_{t,x}}\sim R^{{-(n-2)/2}}\|\int_{I}
F(s)s^{\frac{n-2}{2}}e^{i(rs-ts^2)}ds\|_{L^{\infty}_{t,r}}\lesssim R^{-(n-2)/2}\|f\|_{L^p(S)}.
\end{equation*}
\end{proof}

Now we see that the restriction estimates in Theorem \ref{thm:dyadic-lin} follow from Propositions
\ref{prop: lin-err J_m}, \ref{prop: lin-q=2}, \ref{prop:lin-q=4}, \ref{prop:lin-q=3p'} and
\ref{prop:lin-q=infty}.

The remainder of this section is devoted constructing counterexamples. In view of the propositions
above, we will use the main term of $(fd\sigma)^{\vee}$, $$\M f(t,x)=c_nr^{-\frac{n-2}{2}}\int_{I}
F(s)s^{\frac{n-2}{2}} e^{i(\pm rs-ts^2)} ds,$$ since the bound $B$ given by the error terms are
much smaller than that by the main terms when $R$ is sufficiently large .

Our first counterexample is of Knapp-type, which is designed to show the estimates in the region
$I$ in Figure \ref{fig:lin-dya} determined by the estimates $L^2\to L^2$, $L^4\to L^4$ and $L^1\to
L^{\infty}$ are sharp. The strength of the standard Knapp example or its variants lie in the idea
of using both spatial localization and frequency localization. In this example, we will only show
that the estimate $L^2\to L^2$ is sharp since the computations for others are similar.
\begin{example}[\textbf{I}]\label{ex:lin-I}
If $R\ge 2$, the $L^2\to L^2 $ estimate goes back to \eqref{eq:linear q=2} in Proposition
\ref{prop: lin-q=2}. We take
$$f(|\xi|^2, \xi)=F(|\xi|)=|\xi|^{-(n-2)/2}1_{\{1\le|\xi|\le 1+R^{-1/2}\}}
e^{-ir_0|\xi|+it_0|\xi|^2},$$
where $r_0\in [R/2, R]$ and $t_0\in \R$. Thus the left-hand side of \eqref{eq:linear q=2} is
comparable to $$\left(\int_{R/2}^R \int_{\R}\left|\int_{1}^{1+R^{-1/2}}e^{[-i(t-t_0)s^2+i(\pm
r-r_0)s]}ds\right|^2 dt\,dr\right)^{1/2},$$ where by $\pm$ it denotes a summation of two terms. We
observe that
\begin{align*}&\left|\int_1^{1+R^{-1/2}}e^{[-i(t-t_0)s^2+i(\pm r-r_0)s]}ds\right|\\
&\quad \quad =\left|\int_1^{1+R^{-1/2}} e^{-i\{(t-t_0)(s-1)^2-[(\pm
r-r_0)-2(t-t_0)](s-1)\}}ds\right|,
\end{align*} and hence choose $r\in[R/2, R]$ and $t\in\R$ such that
\begin{align*} R/100\le t-t_0&\le R/50,\\
\left|(r-r_0)-2(t-t_0)](s-1)\right|&\le R^{1/2}/100.
\end{align*} Thus $r$ and $t$ are in the intersection region of two tubes whose size is of
 $R^{1/2}\times R$.

With this choice of $r$ and $t$, $\left|(t-t_0)(s-1)^2-[(r-r_0)-2(t-t_0)](s-1)\right|$ is less
than a small number, say $\pi/6$. Then by direct computations, the term with $+$ sign will be
bounded below by $R^{1/4}$. However for the term with $-$ sign, given this choice of $r$ and $t$,
we see that the roots of the quadratic polynomial, $(t-t_0)(s-1)^2-[(r+r_0)+2(t-t_0)](s-1)$, will
be strictly less than $-1$, which consequently are not located in the interval $[0,R^{1/2}]$. Thus
by the principle of non-stationary phase, we see that the term with $-$ sign will be bounded above
by $O_N(R^{-N})$ for any $N>0$. Then by choosing $N$ sufficiently large, from the triangle
inequality the left-hand side of \eqref{eq:linear q=2} $\gtrsim R^{1/4}$. Also it is easy to see
that its right-hand side $\lesssim R^{1/4}$. Thus we see that the estimate $L^2\to L^2$ when $R\ge
2$ is sharp.
\end{example}

Our second counterexample is to show that the estimates in the region $II$ in Figure
\ref{fig:lin-dya} determined by the lines $q=2$ and $q=4$ are sharp. In this example we will only
show the estimates on the line $q=2$ in Proposition \ref{prop: lin-q=2} are sharp by using the
principle of stationary phase.
\begin{example}[\textbf{II}]\label{ex:lin-II}
If $R\ge 2$, the estimate $L^p\to L^2$ when $2\le p\le \infty$ goes back to Proposition \ref{prop:
lin-q=2}. We take the example
$$f(|\xi|^2, \xi)=F(|\xi|)=|\xi|^{-\frac{n-2}{2}}1_{\{1\le |\xi|\le 2\}}
e^{-ir_0|\xi|+it_0|\xi|^2},$$ where $r_0\in [R/2, R]$ and $t_0\in \R$. Then the left-hand side of
\eqref{eq:linear q=2} is comparable to
$$\left(\int_{R/2}^R\int_{\R}\left|\int_I e^{-i\{(t-t_0)(s-1)^2-[(\pm
r-r_0)-2(t-t_0)](s-1)\}}ds\right|^2 dt\,dr\right)^{1/2}.$$ We choose $r\in [R/100, R/50]$ and
$t\in \R$ such that ${(r-r_0)}/{2(t-t_0)}\in [1,2]$. Then
$$-\frac{-[(r-r_0)-2(t-t_0)]}{2(t-t_0)}\in[0,1],\,-\frac{(r+r_0)+2(t-t_0)}{2(t-t_0)}<-1.$$
Then from the principles of stationary phase and non-stationary phase,
\begin{align*}
\left|\int_I e^{-i\{(t-t_0)(s-1)^2-[(r-r_0)-2(t-t_0)](s-1)\}}ds\right|&\gtrsim R^{-1/2},\\
\left|\int_I e^{-i\{(t-t_0)(s-1)^2+[( r+r_0)+2(t-t_0)](s-1)\}}ds\right|&\lesssim_N R^{-N},
\end{align*} for any $N\ge 0$. Then if choosing $N$ sufficiently large, the triangle inequality
gives
$$\left(\int_{R/2}^R\int_{\R}\left|\int_Ie^{-i\{(t-t_0)(s-1)^2-[(\pm r-r_0)-2(t-t_0)]
(s-1)\}}ds\right|^2 dt\,dr\right)^{1/2}\gtrsim R^{1/2}.$$ Its right-hand side $\lesssim R^{1/2}$
for $2\le p\le \infty$. Thus we see that the estimates on the line $q=2$ when $R\ge 2$ are sharp.
\end{example}

The third counterexample shows that the estimates inside the region $III$ determined by lines
$q=4$, $q=3p'$ and $q=\infty$ in Figure \ref{fig:lin-dya} are sharp. In this example, we will only
carry out the computations for the estimates $L^p\to L^q$ on the line $q=\infty$ in Proposition
\ref{prop:lin-q=infty}.

\begin{example}[\textbf{III}]\label{ex:lin-III}
If $R\ge 2$, the estimate $L^p\to L^{\infty}$ when $1\le p\le \infty$ goes back to Proposition
\ref{prop:lin-q=infty}. We take
$$f(|\xi|^2, \xi)=F(|\xi|)=|\xi|^{-\frac{n-2}{2}}1_{\{1\le |\xi|\le 2\}}e^{-ir_0|\xi|+it_0|\xi|^2},$$
where $r_0\in [R/2, R]$ and $t_0\in \R$. They are chosen such that
$\|(fd\sigma)^{\vee}\|_{L^{\infty}_{t,x}}$ can be realized at $(t_0, x_0)$ with $r_0=|x_0|$. Hence
the left-hand side of the inequality \eqref{eq:linear q=infty} is comparable to
$$R^{-\frac{n-2}{2}}\left|\int_{I}e^{i(\pm r_0-r_0)s} ds\right|.$$
Since $r_0\in [R/2, R]$, $\left|\int_{I}e^{-i2r_0s} ds\right|\lesssim R^{-N}$ holds for any $N\ge
0$. Then the triangle inequality yields $R^{-\frac{n-2}{2}}\left|\int_{I}e^{i(\pm r_0-r_0)s}
ds\right|\gtrsim R^{-\frac{n-2}{2}}$. On the other hand, the right-hand side of \eqref{eq:linear
q=infty}$\lesssim R^{-(n-2)/2}$ for $1\le p\le \infty$. Hence the estimates on the line $q=\infty$
when $R\ge 2$ are sharp.

For lines $q=4$ and $q=3p'$, the estimates go back to \eqref{eq:lin-q=4} and \eqref{eq:linear
q=3p'}. We choose $r\in [R/2, R]$ and $t$ such that $2\le r-r_0\le 4,\,2\le t-t_0\le 4$. Then by
the same reasoning as Example \ref{ex:lin-I}, these estimates are sharp.
\end{example}

Thus the proof of Theorem \ref{thm:dyadic-lin} is complete.

\section{Proof of Theorem \ref{thm:dyadic-bilin}: bilinear estimates and examples}\label{sec:biliear}
For $f\in \L_1$ and $g\in \L_M$ with $0<M\le 1/4$, we set $I_1=[1, 2],\,I_M=[M, 2M]$ and
$F(|\xi|)=f(|\xi|^2, \xi), G(|\xi|)=g(|\xi|^2, \xi)$. In the bilinear case, $1$ and $1/M$ will
bring in two natural separation scales in the physical space. In light of the proof of Theorem
\ref{thm:dyadic-lin}, we will have the following permutations of the product
$(fd\sigma_1)^{\vee}(gd\sigma_2)^{\vee}$.
\begin{itemize}
\item\label{case 1} when $|x|=r\ge 1/M$,
\begin{align*}
|(fd\sigma_1)^{\vee}(gd\sigma_2)^{\vee}|=|\M f\M g+\M f\E g+\M g\E f+\E f\E g|.
\end{align*}
\item\label{case 2} when $1\le |x|=r\le 1/M$,
\begin{equation*}
|(fd\sigma_1)^{\vee}(gd\sigma_2)^{\vee}|=|\M f(gd\sigma_2)^{\vee}+\E f(gd\sigma_2)^{\vee}|.
\end{equation*}
\item\label{case 3}  when $|x|\le 1$, $|(fd\sigma_1)^{\vee}(gd\sigma_2)^{\vee}|$ remains
unchanged.
\end{itemize}
We are going to prove the ``estimates" part of Theorem \ref{thm:dyadic-bilin} via the following
three propositions and its ``sharpness" part by building counterexamples in three cases followed.
\begin{proposition}\label{prop:sml-bilin-err}
Suppose $f\in \L_1$ and $g\in \L_M$ with $0<M\le 1/4$, and $R\le 1$ is a dyadic number. Then we
have sharp estimates
\begin{itemize}
\item for $q=1$ and $2\le p\le \infty$,
\begin{equation}\label{ex:loc-22}
\|(fd\sigma_1)^{\vee}(gd\sigma_2)^{\vee}\|_{L^1_{t,x}}\lesssim
R^{n-1}M^{-1+\frac{n-1}{p'}}\|f\|_{L^p(S_1)}\|g\|_{L^p(S_2)}.
\end{equation}
\item for $q\ge \max\{2, p'\}$,
\begin{equation}\label{ex:loc-21}
\|(fd\sigma_1)^{\vee}(gd\sigma_2)^{\vee}\|_{L^q_{t,x}}\lesssim
R^{\frac{n-1}{q}}M^{\frac{n-1}{p'}}\|f\|_{L^p(S_1)}\|g\|_{L^p(S_2)}.
\end{equation}
\end{itemize}
\end{proposition}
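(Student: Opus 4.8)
The plan is to peel off one of the two factors in $(fd\sigma_1)^{\vee}(gd\sigma_2)^{\vee}$ by H\"older's inequality, estimate the $f$--factor directly by Proposition \ref{prop: sml-err}, and estimate the $g$--factor either trivially in $L^\infty$ or, after a parabolic rescaling, again by Proposition \ref{prop: sml-err}. I will use freely the elementary facts $\|f\|_{L^p(S_1)}\sim\|F\|_{L^p(I_1)}$ and $\|g\|_{L^p(S_2)}\sim M^{\frac{n-2}{p}}\|G\|_{L^p(I_M)}$ (write the $L^p(d\sigma)$ norms in polar coordinates and use $|\xi|\sim1$ on $I_1$, $|\xi|\sim M$ on $I_M$), together with the rescaling $\xi=M\eta$, under which $(gd\sigma_2)^{\vee}(t,x)=M^{n-1}(\tilde gd\sigma)^{\vee}(M^2t,Mx)$ where $\tilde g\in\L_1$ is defined by $\tilde g(|\eta|^2,\eta)=G(M|\eta|)$; one computes $\|\tilde g\|_{L^p(S)}\sim M^{-\frac{n-1}{p}}\|g\|_{L^p(S_2)}$, and since $M\le1/4$ and $R\le1$ are dyadic, $MR\le1/4$ is a dyadic number $\le1$, so Proposition \ref{prop: sml-err} applies with $R$ replaced by $MR$.

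For \eqref{ex:loc-21} I would bound $\|(fd\sigma_1)^{\vee}(gd\sigma_2)^{\vee}\|_{L^q_{t,x}}\le\|(fd\sigma_1)^{\vee}\|_{L^q_{t,x}}\|(gd\sigma_2)^{\vee}\|_{L^\infty_{t,x}}$. The first factor is $\lesssim R^{\frac{n-1}{q}}\|f\|_{L^p(S_1)}$ by Proposition \ref{prop: sml-err}, whose hypothesis $q\ge\max\{2,p'\}$ is exactly what is assumed here. For the second factor, $\|(gd\sigma_2)^{\vee}\|_{L^\infty_{t,x}}\le\|g\|_{L^1(S_2)}\le\sigma_2(S_2)^{1/p'}\|g\|_{L^p(S_2)}\sim M^{\frac{n-1}{p'}}\|g\|_{L^p(S_2)}$ since $\sigma_2(S_2)\sim M^{n-1}$. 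Multiplying the two bounds gives \eqref{ex:loc-21}. For \eqref{ex:loc-22}, since $q=1$ and $p\ge2$ force the H\"older exponents on the two factors to be $2$ and $2$, I would use Cauchy-Schwarz: $\|(fd\sigma_1)^{\vee}(gd\sigma_2)^{\vee}\|_{L^1_{t,x}}\le\|(fd\sigma_1)^{\vee}\|_{L^2_{t,x}}\|(gd\sigma_2)^{\vee}\|_{L^2_{t,x}}$. By Proposition \ref{prop: sml-err} with $q=2$ (admissible since $p\ge2$) the first factor is $\lesssim R^{\frac{n-1}{2}}\|f\|_{L^p(S_1)}$. For the $g$--factor, the rescaling above and the substitution $t\mapsto M^2t$, $x\mapsto Mx$ give $\|(gd\sigma_2)^{\vee}\|_{L^2_{t,x}(\R\times A_R)}=M^{\frac{n-3}{2}}\|(\tilde gd\sigma)^{\vee}\|_{L^2_{t,x}(\R\times A_{MR})}$; applying Proposition \ref{prop: sml-err} with $q=2$ and radius $MR$, then inserting $\|\tilde g\|_{L^p(S)}\sim M^{-\frac{n-1}{p}}\|g\|_{L^p(S_2)}$, yields $\|(gd\sigma_2)^{\vee}\|_{L^2_{t,x}}\lesssim R^{\frac{n-1}{2}}M^{(n-2)-\frac{n-1}{p}}\|g\|_{L^p(S_2)}$. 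Since $(n-2)-\frac{n-1}{p}=-1+\frac{n-1}{p'}$, the product of the two factors is exactly the right-hand side of \eqref{ex:loc-22}.

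It remains to exhibit counterexamples showing these are sharp. For the line $q\ge\max\{2,p'\}$ this parallels Proposition \ref{prop: sml-err}: take $F(|\xi|)=|\xi|^{-(n-2)}1_{\{1\le|\xi|\le2\}}e^{it_0|\xi|^2}$ for $f$ and its $M$--rescaled analogue for $g$; using that $(d\mu)^{\vee}$ does not vanish on $\{|z|\le2\}$, one gets $|(fd\sigma_1)^{\vee}|\gtrsim1$ and $|(gd\sigma_2)^{\vee}|\gtrsim M^{n-1}$ on $\{|t-t_0|\lesssim1\}\times A_R$, a set of measure $\sim R^{n-1}$, while $\|f\|_{L^p(S_1)}\sim1$ and $\|g\|_{L^p(S_2)}\sim M^{\frac{n-1}{p}}$, which matches the upper bound. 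The delicate point — and where I expect most of the work to lie — is the sharpness of \eqref{ex:loc-22}: a single cap is off by a power of $M$ because the Cauchy-Schwarz step is not saturated. The remedy is to take $f=\sum_{j}f_j$, a superposition of $\sim M^{-2}$ caps $f_j(|\xi|^2,\xi)=\phi(|\xi|)e^{it_j|\xi|^2}$ whose focusing times $t_j$ are spread with $O(1)$ spacing over an interval of length $\sim M^{-2}$, paired with a single cap $g(|\xi|^2,\xi)=\psi(|\xi|)e^{i\tilde t_0|\xi|^2}$ with $\tilde t_0$ in the middle of that range; then $(gd\sigma_2)^{\vee}$ stays $\gtrsim M^{n-1}$ on $\{|t|\lesssim M^{-2}\}\times A_R$ while $(fd\sigma_1)^{\vee}$ is $\gtrsim1$ on $\bigcup_j\{|t-t_j|\lesssim1\}\times A_R$, so the product is $\gtrsim M^{n-1}$ on a set of measure $\sim M^{-2}R^{n-1}$ and $\|(fd\sigma_1)^{\vee}(gd\sigma_2)^{\vee}\|_{L^1_{t,x}}\gtrsim M^{n-3}R^{n-1}$. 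One then has to control $\|f\|_{L^p(S_1)}$, i.e., the $L^p$--norm on $[1,2]$ of the exponential sum $\sum_j e^{it_js^2}$; choosing the $t_j$ by a randomization and invoking the Khintchine inequality keeps $\|f\|_{L^p(S_1)}\lesssim M^{-1}$ (up to logarithms) for all $2\le p\le\infty$, and since $\|g\|_{L^p(S_2)}\sim M^{\frac{n-1}{p}}$ the right-hand side of \eqref{ex:loc-22} is also $\sim M^{n-3}R^{n-1}$. Making this argument work cleanly at the $L^\infty$ endpoint — bounding $\sup_{s\in[1,2]}|\sum_j e^{it_js^2}|$ — is the most technical piece, and is the main obstacle I anticipate.
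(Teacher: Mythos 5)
Your upper-bound argument is correct and is in substance the paper's own: for \eqref{ex:loc-21} the paper uses exactly your H\"older splitting into $L^q\times L^\infty$ together with Proposition \ref{prop: sml-err} and the trivial bound $\|(gd\sigma_2)^{\vee}\|_{L^\infty}\lesssim M^{\frac{n-1}{p'}}\|g\|_{L^p(S_2)}$; for \eqref{ex:loc-22} the paper carries out the same Cauchy--Schwarz plus Plancherel-in-$t$ computation, only written directly in polar coordinates on the product (arriving at the intermediate bound $R^{n-1}M^{n-2}\|F\|_{L^2(I_1)}M^{-1/2}\|G\|_{L^2(I_M)}$) rather than packaged, as you do, as two applications of Proposition \ref{prop: sml-err} after the parabolic rescaling $\xi=M\eta$. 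Your bookkeeping checks out: $MR$ is dyadic and $\le 1$, the Jacobian factors give $M^{\frac{n-3}{2}}(MR)^{\frac{n-1}{2}}M^{-\frac{n-1}{p}}=R^{\frac{n-1}{2}}M^{-1+\frac{n-1}{p'}}$, so the rescaling route is a clean and slightly more systematic repackaging of the same estimate.

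On sharpness, your single-cap example for \eqref{ex:loc-21} is the paper's Example \ref{ex:III,IV,V-sml}, and you correctly diagnose that \eqref{ex:loc-22} requires a Khintchine randomization over $\sim M^{-2}$ pieces (the paper's Example \ref{ex:II-sml}). The one substantive difference is that the paper's pieces are \emph{frequency-disjoint} caps $f_j=|\xi|^{-(n-2)}e^{it_0|\xi|^2}1_{\{1+(j-1)M^2\le|\xi|\le 1+jM^2\}}$ with random signs, not your time-modulated caps $\phi(|\xi|)e^{it_j|\xi|^2}$ all supported on $[1,2]$. This is precisely what dissolves the obstacle you flag at the end: with disjoint supports, $|f|=\sum_j|f_j|$ pointwise, so $\|f\|_{L^p(S_1)}$ is independent of the random signs for \emph{every} $p$, including $p=\infty$; there is no exponential sum $\sup_s|\sum_j\eps_j e^{it_js^2}|$ to control and hence no $(\log(1/M))^{1/2}$ loss at the $L^\infty$ endpoint, whereas your construction would prove sharpness there only up to that logarithm. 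The role of your spread-out focusing times is played, via the uncertainty principle, by the fact that a cap of frequency width $M^2$ satisfies $|(f_jd\sigma_1)^{\vee}|\gtrsim M^2$ on the whole time interval $|t-t_0|\lesssim M^{-2}$. So: decompose in frequency rather than modulate in time, and your outline closes completely.
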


\begin{proof}
If we change to the polar coordinates, the left-hand side of \eqref{ex:loc-22} reduces to
\begin{align}\label{eq:loc-32}
\int_{R/2}^R\int_{\R}&\left|\int_{I_1}F(s_1)e^{-its_1^2}(d\mu)^{\vee}(rs_1e_1)s_1^{n-2}ds_1
\times \right.\nonumber\\
&\left.\times\int_{I_M}G(s_2)e^{-its_2^2}(d\mu)^{\vee}(rs_2e_1)s_2^{n-2}ds_2\right|dtr^{n-2}dr.
\end{align} We use the Cauchy-Schwarz inequality
and the Plancherel theorem in $t$ to bound \eqref{eq:loc-32} by
\begin{equation}\label{eq:loc-31}
R^{n-1}M^{n-2}\|F\|_{L^2(I_1)}M^{-1/2}\|G\|_{L^{2}(I_M)}.
\end{equation} Then by the H\"older inequality,
\eqref{eq:loc-31} is bounded by $ R^{n-1}M^{n-2-\frac{n-1}{p}}\|f\|_{L^p(S_1)}\|g\|_{L^p(S_2)}. $
Hence the inequality \eqref{ex:loc-22} follows.

To prove \eqref{ex:loc-21}, by the H\"older inequality,
\begin{equation*}
\|(fd\sigma_1)^{\vee}(gd\sigma_2)^{\vee}\|_{L^q_{t,x}}\lesssim \|(fd\sigma_1)^{\vee}\|_{L^q_{t,x}}
\|(gd\sigma_2)^{\vee}\|_{L^{\infty}_{t,x}}.
\end{equation*} On the one hand, by Proposition \ref{prop: sml-err},
$\|(fd\sigma_1)^{\vee}\|_{L^q_{t,x}}\lesssim R^{\frac{n-1}{q}}\|f\|_{L^p(S_1)}. $ On the other
hand, by the H\"older inequality, $\|(gd\sigma_2)^{\vee}\|_{L^{\infty}_{t,x}}\lesssim
M^{\frac{n-1}{p'}}\|g\|_{L^p(S_2)}$. Hence the inequality \eqref{ex:loc-21} follows.
\end{proof}

The following proposition concerns the case where $1\le|x|\le 1/M$.
\begin{proposition}\label{prop: med-bilin-error}
Suppose $f\in \L_1$ and $g\in \L_M$ with $0<M\le 1/4$, and $2\le R\le 1/M$. Then
\begin{itemize}
\item for $q=1$ and $2\le p\le \infty$,
\begin{equation}\label{eq:loc-29}
\|(fd\sigma_1)^{\vee}(gd\sigma_2)^{\vee}\|_{L^1_{t,x}}\lesssim R^{\frac n2}
M^{-1+\frac{n-1}{p'}}\|f\|_{L^p(S_1)}\|g\|_{L^p(S_2)}.
\end{equation}
\item for $q\ge \max\{2, p'\}$,
\begin{equation}\label{eq: loc-15}
\|(fd\sigma_1)^{\vee}(gd\sigma_2)^{\vee}\|_{L^q_{t,x}}\lesssim \|R^*\|_{L^p\to
L^q}M^{\frac{n-1}{p'}}\|f\|_{L^p(S_1)}\|g\|_{L^p(S_2)},
\end{equation}
where $\|R^*\|_{L^p\to L^q}$ denotes the operator norm of $f\to (fd\sigma)^{\vee}$ from $L^p(S_1)$
to $L^{q}_{t,x}$ given by Theorem \ref{thm:dyadic-lin}.
\end{itemize}
\end{proposition}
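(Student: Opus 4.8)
The plan is to follow the same structure used for Proposition \ref{prop:sml-bilin-err}, but now the spatial variable $r=|x|$ lives in a range $[2,1/M]$ where the large-$|x|$ decomposition $(fd\sigma_1)^\vee = \M f + \E f$ from the Fourier-Bessel formula applies to $f\in\L_1$ (since $r\ge 1$), while $g\in\L_M$ is still in its ``small-$|x|$'' regime because $rs_2 \lesssim (1/M)\cdot M = 1$, so no stationary-phase splitting is available or needed for $g$. This is exactly case 2 in the list preceding the proposition: $|(fd\sigma_1)^\vee(gd\sigma_2)^\vee| = |\M f\,(gd\sigma_2)^\vee + \E f\,(gd\sigma_2)^\vee|$. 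The first move is therefore to split the product into a main-term contribution and an error-term contribution via the triangle inequality, and to observe (as in Section \ref{sec:linear}) that the $\E f$ contribution is dominated by the $\M f$ contribution for $R\ge 2$, so it suffices to bound $\|\M f\,(gd\sigma_2)^\vee\|_{L^q_{t,x}(\R\times A_R)}$; the $\E f$ piece is handled afterward by the same computation with the extra $r^{-n/2}$-type decay from \eqref{eq:error for J_m}, which only improves the bound.

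For the $q=1$ estimate \eqref{eq:loc-29}: pass to polar coordinates, insert $\M f(t,x) = c_n r^{-(n-2)/2}\int_{I_1} F(s_1)s_1^{(n-2)/2} e^{i(rs_1 - ts_1^2)}\,ds_1$ for the $f$-factor and the exact expression $\int_{I_M} G(s_2) e^{-its_2^2}(d\mu)^\vee(rs_2 e_1)s_2^{n-2}\,ds_2$ for the $g$-factor, then apply Cauchy--Schwarz in $t$ together with Plancherel in $t$ on each factor separately. On the $f$-side, Plancherel in $t$ after the change of variables $s_1 \mapsto a = s_1^2$ gives $\|F\|_{L^2(I_1)}$ (with the $r$-weight producing an $R$-power), and crucially the $r^{-(n-2)/2}$ prefactor combines with the $r^{n-2}$ polar weight to leave $r^{n/2}$, hence the $R^{n/2}$ gain over the $R^{n-1}$ of the $R\le 1$ case. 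On the $g$-side we use $\|(d\mu)^\vee\|_{L^\infty}\lesssim 1$ and $\int_{I_M}\!\to M^{-1/2}\|G\|_{L^2(I_M)}$ as in \eqref{eq:loc-31}. Then the integration over $r\in[R/2,R]$ and Hölder on each of $\|F\|_{L^2}, \|G\|_{L^2}$ (converting $L^2\to L^p$, which produces the $M^{-1+(n-1)/p'}$ factor via $\|G\|_{L^2(I_M)}\sim M^{(n-1)/2 - (n-1)/p}\|g\|_{L^p(S_2)}$) yields \eqref{eq:loc-29}.

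For the $q\ge\max\{2,p'\}$ estimate \eqref{eq: loc-15}: apply Hölder in the form $\|(fd\sigma_1)^\vee(gd\sigma_2)^\vee\|_{L^q_{t,x}} \lesssim \|(fd\sigma_1)^\vee\|_{L^q_{t,x}(\R\times A_R)}\,\|(gd\sigma_2)^\vee\|_{L^\infty_{t,x}}$, then bound the first factor directly by the linear operator norm $\|R^*\|_{L^p\to L^q}\|f\|_{L^p(S_1)}$ supplied by Theorem \ref{thm:dyadic-lin}, and the second factor by $M^{(n-1)/p'}\|g\|_{L^p(S_2)}$ via Hölder on $S_2$ exactly as in the proof of Proposition \ref{prop:sml-bilin-err}. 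This is the clean, formal part. The genuine work — and the step I expect to be the main obstacle — is the $q=1$ case: one has to be careful that the Cauchy--Schwarz-in-$t$ split does not waste the frequency/spatial localization, that the change of variables $s_1\mapsto s_1^2$ on $I_1=[1,2]$ (where $s_1\sim 1$) is harmless, and above all that the bookkeeping of the $r$-powers — the $r^{-(n-2)/2}$ from $\M f$, the $r^{n-2}$ Jacobian, the absence of $r$-decay on the $g$-factor — correctly produces $R^{n/2}$ and not $R^{n-1}$, since matching the ``continuity'' $B\sim C$ at $R\sim 1$ (and $A\sim B$ at $R\sim 1/M$) is the whole point of the dyadic bookkeeping. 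Sharpness of these estimates is deferred to the counterexamples constructed later in the section.
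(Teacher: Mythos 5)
Your proposal is correct and follows essentially the same route as the paper: the Fourier--Bessel splitting $(fd\sigma_1)^{\vee}=\M f+\E f$ with $(gd\sigma_2)^{\vee}$ kept intact (since $rs_2\lesssim 1$), Cauchy--Schwarz and Plancherel in $t$ for the $q=1$ bound with the $r^{-(n-2)/2}\cdot r^{n-2}$ bookkeeping producing $R^{n/2}$, the observation that the $\E f$ piece only gains decay, and H\"older $L^q\times L^{\infty}$ combined with Theorem \ref{thm:dyadic-lin} for \eqref{eq: loc-15}. No substantive differences from the paper's argument.
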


\begin{proof}
To prove \eqref{eq:loc-29}, it suffices to prove the following inequalities by Lemma
\ref{lem:Four-Bess},
\begin{equation*}
\int_{R/2}^{R}\int_{\R}|\M f(t, re_1)||(gd\sigma_2)^{\vee}(t,re_1)|dt\,r^{n-2}dr\lesssim R^{\frac
n2}M^{-1+\frac{n-1}{p'}}\|f\|_{L^p(S_1)}\|g\|_{L^p(S_2)},
\end{equation*}
and
\begin{equation*}
\int_{R/2}^{R}\int_{\R}|\E f(t,re_1)||(gd\sigma_2)^{\vee}(t,re_1)|dt\,r^{n-2}dr \lesssim
R^{\frac{n}{2}-1}M^{-1+\frac{n-1}{p'}}\|f\|_{L^p(S_1)}\|g\|_{L^p(S_2)}.
\end{equation*}
These two estimates above can be proven along similar lines as proving \eqref{ex:loc-22}. We
choose to prove the second. The H\"older inequality yields,
\begin{equation*}
\|(fd\sigma_1)^{\vee}(gd\sigma_2)^{\vee}\|_{L^q_{t,x}}\lesssim
\|(fd\sigma_1)^{\vee}\|_{L^q_{t,x}}\|(gd\sigma_2)^{\vee}\|_{L^{\infty}_{t,x}}.
\end{equation*} Then by using the same reasoning as in the proof of \eqref{ex:loc-21},
the inequality \eqref{eq: loc-15} follows. We note that the error term $\E f$ gives a better decay
estimate as expected.
\end{proof}

Next let us concentrate on the case where $|x|\ge 1/M$. As indicated at the beginning of this
section, we will have to deal with estimates involving $|\M f\M g|$, $|\M f\E g|$, $|\E f \M g|$
and $|\E f\E g|$.

\begin{proposition}[Bilinear main term estimates]\label{prop: l-bilin-error}
Suppose $f\in \L_1$ and $g\in \L_M$ with $0<M\le 1/4$, and $R\ge 1/M$. Then
\begin{itemize}
\item for $q=1$ and $2\le p\le \infty$,
\begin{equation}\label{eq: loc-11}
\|(fd\sigma_1)^{\vee}(gd\sigma_2)^{\vee}\|_{L^1_{t,x}}\lesssim
RM^{\frac{n-2}{2}-\frac{n-1}{p}}\|f\|_{L^p(S_1)}\|g\|_{L^p(S_2)}.
\end{equation}
\item for $q=2$ and $2\le p\le \infty$,
\begin{equation}\label{eq: loc-12}
\|(fd\sigma_1)^{\vee}(gd\sigma_2)^{\vee}\|_{L^2_{t,x}}\lesssim R^{-\frac{n-2}{2}}
M^{\frac{n-1}{2}-\frac{n-1}{p}}\|f\|_{L^p(S_1)}\|g\|_{L^p(S_2)}.
\end{equation}
\item for $q\ge \max\{4, 3p'\}$ and $1\le p\le \infty$,
\begin{equation}\label{eq:loc-4}
\|(fd\sigma_1)^{\vee}(gd\sigma_2)^{\vee}\|_{L^q_{t,x}}\lesssim \|R^*\|_{L^p\to
L^q}R^{-\frac{n-2}{2}}M^{\frac{n}{2}-\frac{n-1}{p}}\|f\|_{L^p(S_1)}\|g\|_{L^p(S_2)}.
\end{equation}
\end{itemize}
\end{proposition}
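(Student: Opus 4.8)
The plan is first to reduce to the heuristic main terms. By Lemma~\ref{lem:Four-Bess} we have $(fd\sigma_1)^\vee=\M f+\E f$ and $(gd\sigma_2)^\vee=\M g+\E g$, so on $\R\times A_R$ the product splits as $\M f\,\M g+\M f\,\E g+\E f\,\M g+\E f\,\E g$ (the decomposition recorded at the start of this section). Each of the three terms carrying an error factor is disposed of by H\"older together with Proposition~\ref{prop: lin-err J_m} (and its analogue for $g\in\L_M$), exactly as the error term was handled in Proposition~\ref{prop: med-bilin-error}, and in every regime the resulting bound is dominated by the main-term bound. So it suffices to estimate $\M f\,\M g$ on $\R\times A_R$. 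Writing $A_f(t,r)=\int_{I_1}F(s)s^{\frac{n-2}{2}}e^{i(\pm rs-ts^2)}ds$ and $A_g$ for the analogous integral over $I_M$ — so that $\M f(t,x)=c_n r^{-\frac{n-2}{2}}A_f(t,|x|)$ and similarly for $g$ — passing to polar coordinates turns the task into bounding
\[
\left(\int_{R/2}^{R}\int_{\R}r^{(n-2)(1-q)}\,|A_f(t,r)A_g(t,r)|^{q}\,dt\,dr\right)^{1/q},
\]
where on $[R/2,R]$ the weight $r^{(n-2)(1-q)}$ may be replaced by the constant $R^{(n-2)(1-q)}$.

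For $q=1$ I would use the Cauchy--Schwarz inequality in $t$ for each fixed $r$, reducing matters to $\int_{R/2}^{R}\|A_f(\cdot,r)\|_{L^2_t}\|A_g(\cdot,r)\|_{L^2_t}\,dr$. After the substitution $a=s^2$, the Plancherel theorem in $t$ gives $\|A_f(\cdot,r)\|_{L^2_t}\sim\|F\|_{L^2(I_1)}$ and $\|A_g(\cdot,r)\|_{L^2_t}\sim M^{\frac{n-3}{2}}\|G\|_{L^2(I_M)}$, uniformly in $r$ (the factors $e^{\pm irs}$ are unimodular in $s$ and drop out). Integrating over an $r$-interval of length $\sim R$ and converting radial $L^2$ norms to surface $L^p$ norms by H\"older — $\|F\|_{L^2(I_1)}\lesssim\|f\|_{L^p(S_1)}$ and $\|G\|_{L^2(I_M)}\lesssim M^{\frac12-\frac{n-1}{p}}\|g\|_{L^p(S_2)}$ — yields precisely \eqref{eq: loc-11}. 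For $q\ge\max\{4,3p'\}$ the argument is softer still: by H\"older $\|\M f\,\M g\|_{L^q}\le\|\M f\|_{L^q}\|\M g\|_{L^\infty}$; the first factor is $\lesssim\|R^*\|_{L^p\to L^q}\|f\|_{L^p(S_1)}$ by Theorem~\ref{thm:dyadic-lin}, while the elementary H\"older bound for $\int_{I_M}|G(s)|s^{\frac{n-2}{2}}ds$ together with $r\sim R$ gives $\|\M g\|_{L^\infty(\R\times A_R)}\lesssim R^{-\frac{n-2}{2}}M^{\frac n2-\frac{n-1}{p}}\|g\|_{L^p(S_2)}$; multiplying these is \eqref{eq:loc-4}.

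The one estimate that genuinely uses the bilinear structure is $q=2$, and here the point is that $f$ lives at frequency $\sim 1$ while $g$ lives at frequency $\sim M\le 1/4$. For each of the four sign combinations I would change variables in the double integral defining $A_f(t,r)A_g(t,r)$ from $(s_1,s_2)\in I_1\times I_M$ to $(u,w)=(s_1^2+s_2^2,\ \eps_1 s_1+\eps_2 s_2)$, whose Jacobian $2|\eps_2 s_1-\eps_1 s_2|$ is $\sim 1$ because $s_1\sim 1$ dominates $s_2\sim M$ — so the transversality making this change of variables nondegenerate comes for free, for every choice of signs. In the new variables $A_f A_g$ is a two-dimensional Fourier transform dual to $(t,r)$, so Plancherel in $(t,r)$, after bounding the $r$-integral over $[R/2,R]$ by the one over all of $\R$, gives
\[
\int_{R/2}^{R}\int_{\R}|A_f A_g|^{2}\,dt\,dr\ \lesssim\ \int_{I_1\times I_M}|F(s_1)|^2|G(s_2)|^2(s_1 s_2)^{n-2}\,ds_1\,ds_2\ \sim\ \|f\|_{L^2(S_1)}^2\,\|g\|_{L^2(S_2)}^2,
\]
using also that $|s_1-s_2|\sim 1$. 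Inserting the weight $R^{-(n-2)}$ and then H\"older ($\|f\|_{L^2(S_1)}\lesssim\|f\|_{L^p(S_1)}$ and $\|g\|_{L^2(S_2)}\lesssim M^{\frac{n-1}{2}-\frac{n-1}{p}}\|g\|_{L^p(S_2)}$ since $|S_2|\sim M^{n-1}$) gives \eqref{eq: loc-12}. I expect the only real difficulty to be bookkeeping: correctly tracking the four sign combinations, the powers of $M$ produced by the weight $s^{n-2}$ and by $|S_2|\sim M^{n-1}$, and checking that the $\E$-terms are indeed lower order in all three regimes; once the substitution $(s_1,s_2)\mapsto(s_1^2+s_2^2,\eps_1 s_1+\eps_2 s_2)$ is in place the analytic content is light.
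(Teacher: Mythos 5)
Your proposal is correct and follows essentially the same route as the paper: reduce to the main terms $\M f\,\M g$ (the error combinations being lower order), use Cauchy--Schwarz plus Plancherel in $t$ for $q=1$, use the change of variables $(s_1,s_2)\mapsto(s_1^2+s_2^2,\pm s_1\pm s_2)$ with Jacobian $\sim 1$ (the transversality from $M\le 1/4$) and Plancherel in $(t,r)$ for $q=2$, and H\"older with the $L^\infty$ bound on $\M g$ together with the linear estimates of Theorem \ref{thm:dyadic-lin} for $q\ge\max\{4,3p'\}$. The only cosmetic difference is that the paper reduces to the $+$ sign terms at the outset, whereas you carry all four sign combinations through the same computation; the powers of $R$ and $M$ you obtain match \eqref{eq: loc-11}, \eqref{eq: loc-12} and \eqref{eq:loc-4}.
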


\begin{proof}
To prove \eqref{eq: loc-11}, it suffices to prove the following inequalities
\begin{align}\label{eq: loc-10}
&\|\M f\M g\|_{L^1_{t,x}}\lesssim RM^{\frac{n-2}{2}-\frac{n-1}{p}}\|f\|_{L^p(S_1)}
\|g\|_{L^p(S_2)}.\\
&\|\E f\E g\|_{L^1_{t,x}}\lesssim R^{-1}
M^{\frac{n-4}{2}-\frac{n-1}{p}}\|f\|_{L^p(S_1)}\|g\|_{L^p(S_2)}.\nonumber\\
&\|\M f\E g\|_{L^1_{t,x}}\lesssim
M^{\frac{n-4}{2}-\frac{n-1}{p}}\|f\|_{L^p(S_1)}\|g\|_{L^p(S_2)}.\nonumber\\
& \|\E f\M g\|_{L^1_{t,x}}\lesssim
M^{\frac{n-2}{2}-\frac{n-1}{p}}\|f\|_{L^p(S_1)}\|g\|_{L^p(S_2)}.\nonumber
\end{align}

In what follows we will only prove \eqref{eq: loc-10} since other estimates involving error terms
will follow similarly. In fact these inequalities give better decay estimates than those given by
\eqref{eq: loc-10}. By the heuristic approximation \eqref{prop: heur-four-bess} with $+$ sign, the
left-hand side of \eqref{eq: loc-10} reduces to
\begin{equation}\label{eq: loc-14}
\int_{R/2}^R\int_{\R}\left|\int_{I_1}F(s_1)s_1^{\frac{n-2}{2}}e^{i(rs_1-ts_1^2)}ds_1
\int_{I_M}G(s_2)s_2^{\frac{n-2}{2}}e^{i(rs_2-ts_2^2)}\right|dt\,dr.
\end{equation}
After changing variables and using the Cauchy-Schwarz inequality and the Plancherel theorem in
$t$, we see that \eqref{eq: loc-14} is bounded by
\begin{equation*}
RM^{\frac{n-3}{2}}\|F\|_{L^2(I_1)}\|G\|_{L^2(I_M)}\sim
RM^{-\frac{1}{2}}\|f\|_{L^2(S_1)}\|g\|_{L^2(S_2)}.
\end{equation*}
Then from the H\"older inequality, the inequality \eqref{eq: loc-10} follows. Similarly, to prove
\eqref{eq: loc-12}, it suffices to prove the following inequalities
\begin{align}\label{eq: loc-9}
&\|\M f\M g\|_{L^2_{t,x}}\lesssim R^{-\frac{n-2}{2}}
M^{\frac{n-1}{2}-\frac{n-1}{p}}\|f\|_{L^p(S_1)}\|g\|_{L^p(S_2)}.\\
&\|\E f\E g\|_{L^2_{t,x}}\lesssim R^{-\frac{n+1}{2}}
M^{\frac{n-2}{2}-\frac{n-1}{p}}\|f\|_{L^p(S_1)}\|g\|_{L^p(S_2)}.\nonumber\\
&\|\M f\E g\|_{L^2_{t,x}}\lesssim
R^{-\frac n2+\frac12}M^{\frac{n-2}{2}-\frac{n-1}{p}}\|f\|_{L^p(S_1)}\|g\|_{L^p(S_2)}.\nonumber\\
& \|\E f\M g\|_{L^2_{t,x}}\lesssim
R^{-\frac{n}{2}+\frac12}M^{\frac{n}{2}-\frac{n-1}{p}}\|f\|_{L^p(S_1)}\|g\|_{L^p(S_2)}.\nonumber
\end{align}
Since the estimates above involving error terms give better decay estimates than \eqref{eq:
loc-9}, we will also only prove \eqref{eq: loc-9}. We rewrite its left-hand side as
\begin{align*}
R^{-\frac{n-2}{2}}&\left(\int_{R/2}^R\int_{\R}\left|\int_{I_1\times I_M}F(s_1)G(s_2)\times
\right.\right.\\
&\left.\left.\quad\times(s_1s_2)^{\frac{n-2}{2}}e^{i(r(s_1+s_2)-t(s_1^2+s_2^2)}ds_1ds_2\right|^2dt
dr\right)^{1/2}.
\end{align*}
Setting $x:=s_1+s_2$ and $y:=s_1^2+s_2^2$, we observe that the Jacobian $\sim |1-M|\sim 1$
provided $M\le 1/4$. From the Plancherel theorem both in $t$ and $r$, the left-hand side of
\eqref{eq: loc-9} is further majorized by
\begin{equation*}
R^{-\frac{n-2}{2}}M^{\frac{n-2}{2}}\|F\|_{L^2(I_1)}\|G\|_{L^2(I_M)}\sim
R^{-\frac{n-2}{2}}\|f\|_{L^2(S_1)}\|g\|_{L^2(S_2)}.
\end{equation*} By using the H\"older inequality again, we see that \eqref{eq: loc-9} follows.
Finally we prove \eqref{eq:loc-4}. In fact, it suffices to prove the following two inequalities
\begin{align*}
\|(fd\sigma_1)^{\vee}\M g\|_{L^q_{t,x}}\lesssim \|R^*\|_{L^p\to L^q}R^{-\frac{n-2}{2}}
M^{\frac{n}{2}-\frac{n-1}{p}}\|f\|_{L^p(S_1)}\|g\|_{L^p(S_2)}.\\
\|(fd\sigma_1)^{\vee}\E g\|_{L^q_{t,x}}\lesssim \|R^*\|_{L^p\to
L^q}R^{-\frac{n}{2}}M^{\frac{n}{2}-1-\frac{n-1}{p}}\|f\|_{L^p(S_1)}\|g\|_{L^p(S_2)}.
\end{align*} The first follows from the H\"older inequality and the linear estimate in Theorem
\ref{thm:dyadic-lin}, and the second follows along similar lines.
\end{proof}

Therefore the restriction estimates in Theorem \ref{thm:dyadic-bilin} are obtained from
Propositions \ref{prop:sml-bilin-err}, \ref{prop: med-bilin-error} and \ref{prop: l-bilin-error}.

In the remainder of this section we will construct counterexamples to show these estimates are
sharp or nearly sharp up to $R^{\eps}$. Since the error terms give much better decay estimates, we
will use the heuristic approximations \eqref{prop: heur-four-bess} of $(fd\sigma_1)^{\vee}$ and
$(gd\sigma_2)^{\vee}$ when computing these examples. We will distinguish them into three cases as
follows.

\underline{\textit{Case 1}: $R\ge 1/M$.}

We start with a common example to show the estimates in the region $I$ in Figure \ref{fig:
bilin-dya} determined by $L^2\times L^2\to L^1$, $L^2\times L^2\to L^2$ and $L^1\times L^1\to
L^{\infty}$ are sharp by using the idea coming from standard Knapp example. In the following
example, we will only do the computations when $p=2$ and $q=1$.

\begin{example}[\textbf{I}]\label{ex:I-large}
If $R\ge 1/M$, $R M^{-\frac12}$ is best possible in the following inequality
\begin{equation}\label{eq: loc-8}
\|(fd\sigma_1)^{\vee} (gd\sigma_2)^{\vee}\|_{L^1_{t,x}} \lesssim R
M^{-\frac12}\|f\|_{L^2(S_1)}\|g\|_{L^2(S_2)}.
\end{equation}

We take \begin{align*}
f(|\xi|^2,\xi)&=F(|\xi|)=|\xi|^{-\frac{n-2}{2}}e^{i(-r_0|\xi|+t_0|\xi|^2)}1_{\{1\le
|\xi|\le 1+R^{-1}M\}},\\
g(|\eta|^2,\eta)&=G(|\eta|)=|\eta|^{-\frac{n-2}{2}}e^{i(-r_0|\eta|+t_0|\eta|^2)}1_{\{M\le
|\eta|\le M+R^{-1}\}},\end{align*} where $r_0\in [R/2, R]$ and $t_0\in \R$. By the heuristic
approximation \eqref{prop: heur-four-bess}, the left-hand side of \eqref{eq: loc-8} is comparable
to
\begin{equation*}
\int_{R/2}^{R} \int_{\R} \left|\int_1^{1+R^{-1}M} e^{i[(\pm r-r_0)s_1-(t-t_0)s_1^2]}
ds_1\int_{M}^{M+R^{-1}} e^{i[(\pm r-r_0)s_2-(t-t_0)s_2^2]}ds_2\right|dt\,dr,
\end{equation*} which we understood is a summation of four terms.
For the integral on $[1, 1+R^{-1}M]$, we will choose $r$ and $t$ such that $R/100\le r-r_0\le
R/50$ and $RM^{-1}/100\le t-t_0\le RM^{-1}/50$. Then we have
\begin{align*}
|[(r-r_0)-2(t-t_0)](s_1-1)-(t-t_0)(s_1-1)^2|&\le 1, \\
-\frac{(r+r_0)+2(t-t_0)}{2(t-t_0)}=-\left(1+\frac{r-r_0}{2(t-t_0)}\right)&\notin[0,R^{-1}M].
\end{align*}
Hence the integral on $[1, 1+R^{-1}M]$ with $+$ sign is $\gtrsim R^{-1}M$ while the one with $-$
sign $\lesssim_N R^{-N}$ for any $N\ge 0$. Similarly for the integral on $[M, M+R^{-1}]$ with this
choice of $r$ and $t$. Then if $N$ is sufficiently large, the triangle inequality gives
\begin{align*}
\int_{R/2}^{R}&\int_{\R} \left|\int_1^{1+R^{-1}M} e^{i[(\pm r-r_0)s_1-(t-t_0)s_1^2]}ds_1\right.\\
&\left.\times \int_{M-R^{-1}}^M e^{i[(\pm r-r_0)s_2-(t-t_0)s_2^2]} ds_2\right|dt\,dr\gtrsim 1.
\end{align*}
Then by direct computations, the right-hand side of \eqref{eq: loc-8} $\lesssim 1$. Thus we see
that the estimate $L^2\times L^2\to L^1$ is sharp when $R\ge 1/M$.
\end{example}

By modifying the above ``narrow'' Example \ref{ex:I-large}, namely taking a linear combination to
create a ``spreading-out'' example, we will show that the estimates in the region $II$ in Figure
\ref{fig: bilin-dya} determined by the lines $q=1$ and $q=2$ are sharp by using the Khintchine
inequality. A similar construction by Lee and Vargas can be found in
\cite{Lee-Vargas:2008:null-form-wave} to show the sharp null form estimates for the wave equation.
In the following example we will only do computations for the estimates on the line $q=1$.

\begin{example}[\textbf{II}]\label{ex:II-larger}
If $R\ge 1/M$, $RM^{(n-2)/2-(n-1)/p}$ is best possible in the following inequality
\begin{equation}\label{eq:loc-27}
\|(fd\sigma_1)^{\vee} (gd\sigma_2)^{\vee}\|_{L^1_{t,x}}\lesssim
RM^{\frac{n-2}{2}-\frac{n-1}{p}}\|f\|_{L^p(S_1)}\|g\|_{L^p(S_2)},
\end{equation} where $2\le p\le \infty$.

We define two index sets $J:=\{j\in \Z: 1\le j\le [RM^{-1}]\}$ and $K:=\{k\in \Z: 1\le k\le
[RM]\}$, where $[x]$ denotes the biggest integer which is less than or equal to $x\in\R$. For each
$j\in J, k\in K$ we define
\begin{align*} &f_j(|\xi|^2,\xi)=F_j(|\xi|)=|\xi|^{\frac{n-2}{2}}e^{i(-r_0|\xi|+t_0|\xi|^2)}
1_{\{1+(j-1)R^{-1}M\le |\xi|\le 1+jR^{-1}M\}},\\
& g_k(|\eta|^2,\eta)=G_k(|\eta|)=|\eta|^{-\frac{n-2}{2}}e^{i(-r_0|\eta|+t_0|\eta|^2)}
1_{\{M+(k-1)R^{-1}\le |\eta|\le M+kR^{-1}\}}.
\end{align*}

Also we set $f=\sum_{j\in J}\eps_j f_j$ and $g=\sum_{k\in K}\tilde{\eps}_k g_k$, where
$\{\eps_j:j\in J\}$ and $\{\tilde{\eps}_k: k\in K\}$ are sets of i.i.d. ~(independent identically
distributed) random variables taking $\pm1$ with an equal probability $1/2$. Note that $f_j$ and
$g_k$ are ``narrow'', disjoint and in the form of Example \ref{ex:I-large}; but $f$ and $g$
``spread out'' and support on the whole set $S_1$ and $S_2$. By the Khintchine inequality, we
estimate the left-hand side of \eqref{eq:loc-27} by
\begin{equation}\label{eq:loc-6}
\mathbb{E}\left(\|(fd\sigma_1)^{\vee} (gd\sigma_2)^{\vee}\|_{L^1_{t,x}}\right)\sim
\|(\sum_{j,k}|(f_jd\sigma_1)^{\vee}(g_kd\sigma_2)^{\vee}|^2)^{1/2}\|_{L^1_{t,x}},
\end{equation} where $\mathbb{E}(X)$ denotes the expectation of the random variable $X$.
By the heuristic approximation \eqref{prop: heur-four-bess}, the right-hand side of
\eqref{eq:loc-6} is comparable to
\begin{align}\label{eq:loc-7}
\int_{R/2}^{R} \int_{\R} &\left(\sum_{j,k}\left|\int_{a_j-R^{-1}M}^{a_j}
F_j(s_1)e^{i[(\pm r-r_0)s_1-(t-t_0)s_1^2]}ds_1\times \right.\right.\nonumber\\
&\left.\left.\times \int_{b_k-R^{-1}}^{b_k} G_k(s_2)e^{i[(\pm
r-r_0)s_2-(t-t_0)s_2^2]}ds_2\right|^2\right)^{1/2}dt\,dr,
\end{align}
where $a_j=1+j R^{-1}M$ and $b_k=M+kR^{-1}$, and by $\pm$ we denote a summation of four terms. We
choose $r$ and $t$ such that $R/100\le r-r_0\le R/50$ and $RM^{-1}\le t-t_0\le RM^{-1}/50$. By
this choice of $r$ and $t$ and similar discussions as in Example \ref{ex:I-large}, the triangle
inequality gives,
\begin{align*}
|\int_{a_j-R^{-1}M}^{a_j}&F_j(s_1)e^{i[(\pm r-r_0)s_1-(t-t_0)s_1^2]}ds_1\times\\
&\times \int_{b_k-R^{-1}}^{b_k} G_k(s_2)e^{i[(\pm r-r_0)s_2-(t-t_0)s_2^2]}ds_2|
 \gtrsim R^{-2}M.
\end{align*} Then \eqref{eq:loc-7} is bounded below by $R^2M^{-1}(|J||K|)^{1/2} R^{-2}M$, i.e.,
$(|J||K|)^{1/2}$. Here $|J|\sim RM^{-1}$ denotes the cardinality of the index set $J$, similarly
for $|K|\sim MR$. Hence we obtain that the left-hand side of \eqref{eq:loc-27} is $\gtrsim R$. On
the other hand, the right-hand side of \eqref{eq:loc-27} $\lesssim R$ for $2\le p\le \infty$.
Hence the estimates on the line $q=1$ when $R\ge 1/M$ are sharp.
\end{example}

The following example shows that the estimates in the region $III$ in Figure \ref{fig: bilin-dya}
determined by the lines $q=2$ and $q=4$ are sharp by the principle of stationary phase. We do
computations when $q=2$.
\begin{example}[\textbf{III}]\label{ex:III-large}
If $R\ge 1/M$, $R^{-\frac{n-2}{2}}M^{\frac{n-1}{2}-\frac{n-1}{p}}$ is best possible in the
following inequality.
\begin{equation}\label{eq:loc-26}
\|(fd\sigma_1)^{\vee} (gd\sigma_2)^{\vee}\|_{L^2_{t,x}}\lesssim
R^{-\frac{n-2}{2}}M^{\frac{n-1}{2}-\frac{n-1}{p}}\|f\|_{L^p(S_1)}\|g\|_{L^p(S_2)},
\end{equation} where $2\le p\le \infty$.

We take
\begin{align*}
f(|\xi|^2,\xi)&=|\xi|^{-\frac{n-2}{2}}e^{i(-r_0|\xi|+t_0|\xi|^2)}1_{\{1\le |\xi|\le 2\}},\\
g(|\eta|^2,\eta)&=|\eta|^{-\frac{n-2}{2}}e^{i(-r_0|\eta|+t_0|\eta|^2)}1_{\{M\le|\eta|\le2M\}},
\end{align*} where $r_0\in [R/2, R]$ and $t_0\in \R$. By the heuristic approximation
\eqref{prop: heur-four-bess}, the left-hand side of \eqref{eq:loc-26} is comparable to
\begin{align*}
R^{-\frac{n-2}{2}}\left(\int_{R/2}^{R}\int_{\R} \left|\int_{I_1}e^{i[(\pm
r-r_0)s_1-(t-t_0)s_1^2]}ds_1\int_{I_M}e^{i[(\pm
r-r_0)s_2-(t-t_0)s_2^2]}ds_2\right|^2dt\,dr\right)^{1/2}.
\end{align*}

We choose $r\in [R/2, R]$ and $t$ such that $M^{-1}/100\le r-r_0\le M^{-1}/50$ and $
{(r-r_0)}/{2(t-t_0)}\in I_1$. Then from the principles of stationary phase and non-stationary
phase, for any $N\ge 0$,
\begin{align*}
\left|\int_{I_1}e^{i[(r-r_0)s_1-(t-t_0)s_1^2]}ds_1\right|&\gtrsim M^{1/2},\\
\left|\int_{I_1}e^{-i[(r+r_0)s_1+(t-t_0)s_1^2]}ds_1\right|&\lesssim_N M^{N},\\
\left|\int_{s_2\sim M}e^{i[(r-r_0)s_2-(t-t_0)s_2^2]}ds_2\right|&\gtrsim M,\\
\left|\int_{s_2\sim M}e^{-i[(r+r_0)s_2+(t-t_0)s_2^2]}ds_2\right|&\lesssim_N M^{N}.
\end{align*}
Then from the triangle inequality, the left-hand side of \eqref{eq:loc-26} $\gtrsim
R^{-(n-2)/2}M^{1/2}$. By direct computations, the right-hand side of \eqref{eq:loc-26} $\lesssim
R^{-(n-2)/2}M^{1/2}$. Thus the estimates on the line $q=2$ when $R\ge 1/M$ are sharp.
\end{example}

The next example will show that the estimates in the region $IV$ in Figure \ref{fig: bilin-dya}
determined by $L^2\times L^2\to L^2$, $L^4\times L^4\to L^4$ and $L^1\times L^1\to L^{\infty}$ are
sharp up to $R^{\eps}$ by using the idea of Knapp example. We will only do computations for the
estimate when $p=q=2$.

\begin{example}[\textbf{IV}]\label{ex:IV-large}
If $R\ge 1/M$, $R^{-\frac{n-2}{2}}$ is best possible in the following inequality.
\begin{equation}\label{eq:loc-25}
\|(fd\sigma_1)^{\vee} (gd\sigma_2)^{\vee}\|_{L^2_{t,x}}\lesssim
R^{-\frac{n-2}{2}}\|f\|_{L^2(S_1)}\|g\|_{L^2(S_2)}.
\end{equation}

We take \begin{align*}
f(|\xi|^2,\xi)&=|\xi|^{-\frac{n-2}{2}}e^{i(-r_0|\xi|+t_0|\xi|^2)}1_{\{1\le |\xi|\le 1+M^{1/2}\}},\\
g(|\eta|^2, \eta)&=|\eta|^{-\frac{n-2}{2}}e^{i(-r_0|\eta|+t_0|\eta|^2)}1_{\{M\le|\eta|\le 2M\}},
\end{align*} where $r_0\in [R/2, R]$ and $t_0\in \R$.
By the heuristic approximation \eqref{prop: heur-four-bess}, the left-hand side of
\eqref{eq:loc-25} is comparable to
\begin{align*}
R^{-\frac{n-2}{2}}\left(\int_{R/2}^{R}\int_{\R} \left|\int_{1}^{1+M^{\frac12}}e^{i[(\pm
r-r_0)s_1-(t-t_0)s_1^2]}ds_1\int_{I_M}e^{i[(\pm
r-r_0)s_2-(t-t_0)s_2^2]}ds_2\right|^2dt\,dr\right)^{1/2}.
\end{align*}

We choose $r\in [R/2, R]$ and $t\in \R$ such that
\begin{align*}
|(r-r_0)-2(t-t_0)|&\le M^{-1/2},\\ |(r-r_0)-2M(t-t_0)|&\le M^{-1},\\
 M^{-1}/100\le t-t_0&\le M^{-1}/50,
\end{align*}
i.e., $r$ and $t$ are located in the intersection area of two tubes which has size $M^{-1}\times
M^{-1/2}$.  Then by similar discussions as Example \ref{ex:I-large}, the left-hand side of
\eqref{eq:loc-25} $\gtrsim R^{-(n-2)/2}M^{3/4}$; by direct computations the right-hand side of
\eqref{eq:loc-25} $\lesssim R^{-(n-2)/2}M^{3/4}$. Hence we see that the estimate $L^2\times L^2\to
L^2$ when $R\ge 1/M$ is sharp.
\end{example}

The next example shows that the estimates in the region $V$ in Figure \ref{fig: bilin-dya}
determined by the lines $q=4$, $q=\infty$ and $q=3p'$ are sharp. In this example, we will do the
computations for the estimates on the line $q=\infty$.

\begin{example}[\textbf{V}]\label{ex:V-large}
If $R\ge 1/M$, $R^{-(n-2)}\times M^{\frac{n}{2}-\frac{n-1}{p}}$ is best possible in the following
inequality
\begin{equation}\label{eq:loc-5}
\|(fd\sigma_1)^{\vee}(gd\sigma_2)^{\vee}\|_{L^{\infty}_{t,x}} \lesssim
R^{-(n-2)}M^{\frac{n}{2}-\frac{n-1}{p}}\|f\|_{L^p(S_1)}\|g\|_{L^p(S_2)},
\end{equation} where $1\le p\le \infty$.

We take\begin{align*}
f(|\xi|^2, \xi)&=|\xi|^{-\frac{n-2}{2}}e^{i(-r_0|\xi|+t_0|\xi|^2)}1_{\{1\le|\xi|\le 2\}},\\
g(|\eta|^2, \eta)&=|\eta|^{-\frac{n-2}{2}}e^{i(-r_0|\eta|+t_0|\eta|^2)}1_{\{M\le|\eta|\le
2M\}},\end{align*} where $r_0\in [R/2, R]$ and $t_0\in \R$ satisfying the $L^{\infty}$ norms
$\|(fd\sigma)^{\vee}\|_{L^{\infty}_{t,x}(\R\times A_R)}$ and
$\|(gd\sigma)^{\vee}\|_{L^{\infty}_{t,x}(\R\times A_R)}$ can be realized at $(t_0,x_0)$ with
$|x_0|=r_0$. By the heuristic approximation \eqref{prop: heur-four-bess}, the left-hand side of
\eqref{eq:loc-5} is comparable to
\begin{equation*}
R^{-(n-2)}\left|\int_{I_1}e^{i(\pm r_0-r_0)s_1}ds_1\int_{I_M}e^{i(\pm r_0-r_0)s_2}ds_2\right|.
\end{equation*}
Then by the same reasoning as Example \ref{ex:lin-III}, the above $\gtrsim R^{-(n-2)}M$. On the
other hand, the right-hand side of \eqref{eq:loc-5} $\lesssim R^{-(n-2)}M$. Hence the estimates on
the line $q=\infty$ when $R\ge 1/M$ are sharp.

For lines $q=4, 4\le p\le \infty$ or $q=3p',1\le p<4$, the estimates go back to \eqref{eq:loc-4}.
We will choose $r\in [R/2, R]$ and $t$ such that $2\le r-r_0\le 4$ and $2\le t-t_0\le 4$. Then by
similar reasoning, the estimates on these lines are sharp.
\end{example}

\underline{\textit{Case 2}: $2\le R\le 1/M $.}

In this subcase, we will construct counterexamples to show the restriction estimates in Theorem
\ref{thm:dyadic-bilin} are sharp when $2\le R\le 1/M$. As in the \textit{Case 1}, we will start
with a ``narrow'' example which shows that estimates in the region $I$ in Figure \ref{fig:
bilin-dya} determined by $L^2\times L^2\to L^1$, $L^2\times L^2\to L^2$ and $L^1\times L^1\to
L^{\infty}$ are sharp. In this example, we will do computations for the estimate $L^2\times L^2\to
L^1$.

\begin{example}[\textbf{I}]\label{ex:I-med}
If $2\le R\le 1/M$, $R^{\frac n2}M^{\frac {n-3}{2}}$ is best possible in
\begin{equation}\label{eq:loc-19}
\|(fd\sigma_1)^{\vee} (gd\sigma_2)^{\vee}\|_{L^1_{t,x}} \lesssim R^{\frac n2}M^{\frac
{n-3}{2}}\|f\|_{L^2(S_1)}\|g\|_{L^2(S_2)}.
\end{equation}

We take \begin{align*} f(|\xi|^2, \xi)&=|\xi|^{-\frac{n-2}{2}}e^{i(-r_0|\xi|+t_0|\xi|^2)}1_{\{1\le
|\xi|\le 1+M^2\}},\\
g(|\eta|^2,\eta)&=|\eta|^{-(n-2)}e^{it_0|\eta|^2}1_{I_M},
\end{align*} where $r_0\in [R/2, R]$ and $t_0\in \R$. By the heuristic approximation
\eqref{prop: heur-four-bess} only for $(fd\sigma)^{\vee}$, we see that the left-hand side of
\eqref{eq:loc-19} is comparable to
\begin{align*}
R^{\frac{n-2}{2}}&\int_{R/2}^{R}\int_{\R} \left|\int_{1}^{1+M^2} e^{i((\pm
r-r_0)s_1-(t-t_0)s_1^2)}ds_1\int_{I_M} e^{-i(t-t_0)s_2^2} (d\mu)^{\vee}(rse_1)ds_2 \right|dt\,dr.
\end{align*}
We choose $r\in [R/2, R]$ and $t\in \R$ such that $$R/100\le r-r_0\le R/50,\,M^{-2}/100\le
t-t_0\le M^{-2}/50.$$ Then we have $|[(r-r_0)-2(t-t_0)](s_1-1)|\le c$ with a small $c>0$ and
$-\frac{(r+r_0)+2(t-t_0)}{2(t-t_0)}<-1$. From the principle of non-stationary phase and the
triangle inequality, the left-hand side of \eqref{eq:loc-19} $\gtrsim R^{\frac{n}{2}}M$. On the
other hand, the right-hand side of \eqref{eq:loc-19} $\lesssim R^{\frac{n}{2}}M$. Thus the
estimate $L^2\times L^2\to L^1$ when $2\le R\le 1/M$ is sharp.
\end{example}

In the next example, we are going to show the estimates in the region $II$ in Figure \ref{fig:
bilin-dya} determined by the lines $q=1$ and $q=2$ are sharp. In this example, we will do
computations for the estimates on the line $q=1$.

\begin{example}[\textbf{II}]\label{ex: II-med}
If $2\le R\le 1/M$, $R^{\frac n2}M^{-1+\frac {n-1}{p'}}$ is best possible in the following
inequality
\begin{equation}\label{eq:q=1, 2, 1<R<1/M}
\|(fd\sigma_1)^{\vee} (gd\sigma_2)^{\vee}\|_{L^1_{t,x}} \lesssim R^{\frac n2}M^{-1+\frac
{n-1}{p'}}\|f\|_{L^p(S_1)}\|g\|_{L^p(S_2)},
\end{equation} where $2\le p\le \infty$. We define an index set $J:=\{j: 1\le j\le [M^{-2}]\}$.
For each $j\in J$, we set
\begin{equation*}
 f_j(|\xi|^2,\xi)=F_j(|\xi|)=|\xi|^{-\frac{n-2}{2}}e^{i(-r_0|\xi|+t_0|\xi|^2)}1_{\{1+(j-1)M^{2}\le
|\xi|\le 1+jM^{2}\}}.
\end{equation*} Then we define $$f=\sum_{j}\eps_j f_j, \,\,
g(|\eta|^2,\eta)=|\eta|^{-(n-2)}e^{it_0|\eta|^2}1_{I_M},$$ where $\{\eps_j: j\in J\}$ is a set of
i.i.d. ~random variables taking $\pm1$ with an equal probability $1/2$. By using the Khintchine
inequality, we obtain
\begin{equation*}
\mathbb{E}\left(\|(fd\sigma_1)^{\vee} (gd\sigma_2)^{\vee}\|_{L^1_{t,x}(\R\times A_R)}\right)\sim
\|(\sum_{j}|(f_jd\sigma_1)^{\vee}(gd\sigma_2)^{\vee}|^2)^{1/2}\|_{L^1_{t,x}(\R\times A_R)},
\end{equation*} where $\mathbb{E}(X)$ denotes the expectation of the random variable $X$. By
the heuristic approximation \eqref{prop: heur-four-bess}, the right-hand side of the above is
comparable to
\begin{align*}
R^{\frac{n-2}{2}}\int_{R/2}^{R} \int_{\R} &\left(\sum_{j}\left|\int_{c_j-M^{2}}^{c_j} e^{i[(\pm
r-r_0)s_1-(t-t_0)s_1^2]}ds_1\right.\right. \times \\
&\left.\left.\times \int_{I_M}
e^{-i(t-t_0)s_2^2}(d\mu)^{\vee}(rs_2e_1)ds_2\right|^2\right)^{1/2}dt\,dr,
\end{align*} where $c_j=1+j M^{2}$.
We choose $r\in [R/2, R]$ and $t\in \R$ such that $$R/100\le r-r_0\le R/50,\, M^{-2}/100\le
t-t_0\le M^{-2}/50.$$ This gives $|[(r-r_0)2(t-t_0)](s_1-c_j)|\le c$ and
$-\frac{(r+r_0)+2(t-t_0)}{2(t-t_0)}<-1$. Then by the principle of non-stationary phase and the
triangle inequality, the above is bounded below by $RM^{-2}|J|^{1/2} M^3$, where $|J|\sim M^{-2}$
denotes the cardinality of the set $J$. Hence the left-hand side of \eqref{eq:q=1, 2, 1<R<1/M}
$\gtrsim R^{n/2}$ and the right-hand side of \eqref{eq:q=1, 2, 1<R<1/M} $\lesssim R^{n/2}$ for
$2\le p\le \infty$. Thus the estimates on the line $q=1$ when $2\le R\le 1/M$ are sharp.
\end{example}

In the following example, we will see the estimates in the region $III$ in Figure \ref{fig:
bilin-dya} determined by the lines $q=2$ and $q=4$ are sharp in the case $2\le R\le 1/M$. We will
do computations for the estimates on the line $q=2$ below.

\begin{example}[\textbf{III}]\label{ex: III-med}
If $2\le R\le 1/M$, $R^{1/2}M^{(n-1)/p'}$ is best possible in the following inequality.
\begin{equation}\label{eq:q=2,4, 1<R<1/M}
\|(fd\sigma_1)^{\vee} (gd\sigma_2)^{\vee}\|_{L^2_{t,x}} \lesssim
R^{1/2}M^{(n-1)/p'}\|f\|_{L^p(S_1)}\|g\|_{L^p(S_2)},
\end{equation} where $2\le p\le \infty$.

We take
\begin{align*}
f(|\xi|^2,\xi)&=|\xi|^{-\frac{n-2}{2}}e^{i(-r_0|\xi|+t_0|\xi|^2)}1_{\{1\le |\xi|\le 2\}},\\
g(|\eta|^2,\eta)&=|\eta|^{-(n-2)}e^{it_0|\eta|^2}1_{I_M},
\end{align*} where $r_0\in [R/2, R]$ and $t_0\in \R$. Then by the heuristic approximation
 \eqref{prop: heur-four-bess}, the left-hand side of
\eqref{eq:q=2,4, 1<R<1/M} is comparable to
\begin{align*}
\left(\int_{R/2}^{R}\int_{\R} \left|\int_{I_1}e^{i[(\pm
r-r_0)s_1-(t-t_0)s_1^2]}ds_1\int_{I_M}e^{-i(t-t_0)s_2^2}
(d\mu)^{\vee}(rs_2e_1)ds_2\right|^2dt\,dr\right)^{1/2}.
\end{align*}

We choose $r\in [R/100, R/50]$ and $t\in \R$ such that $\frac {r-r_0}{2(t-t_0)}\in I_1$. Then $r$
and $t$ are in the region of size $\sim R^2$. The principles of stationary phase and
non-stationary phase again give, for any $N\ge 0$,
$$\left|\int_{I_1}e^{i[(r-r_0)s_1-(t-t_0)s_1^2]}ds_1\right|\gtrsim R^{-\frac{1}{2}},\,
\left|\int_{I_1}e^{-i[(r+r_0)s_1+(t-t_0)s_1^2]}ds_1\right|\lesssim_N R^{-N}.$$ With this choice of
$r$ and $t$, we have
$\left|\int_{I_M}e^{-i(t-t_0)s_2^2}(d\mu)^{\vee}{(irs_2\omega)}ds_2\right|\gtrsim M$. Hence from
the triangle inequality, the left-hand side of \eqref{eq:q=2,4, 1<R<1/M} $\gtrsim R^{1/2}M$. On
the other hand, the right-hand side of \eqref{eq:q=2,4, 1<R<1/M} $\lesssim R^{1/2}M$. Thus we see
that the estimates on the line $q=2$ when $2\le R\le 1/M$ are sharp.
\end{example}

The next example will show that the estimates in the region $IV$ in Figure \ref{fig: bilin-dya}
determined by $L^2\times L^2\to L^2$, $L^4\times L^4\to L^4$ and $L^1\times L^1\to L^{\infty}$ are
sharp. We will do the computations for the estimate $L^2\times L^2\to L^2$.

\begin{example}[\textbf{IV}]\label{ex: IV-med}
If $2\le R\le 1/M$, $R^{1/2}M^{(n-1)/2}$ is best possible in the following inequality.
\begin{equation}\label{ex:loc-17}
\|(fd\sigma_1)^{\vee} (gd\sigma_2)^{\vee}\|_{L^2_{t,x}}\lesssim
R^{\frac12}M^{\frac{n-1}{2}}\|f\|_{L^2(S_1)}\|g\|_{L^2(S_2)}.
\end{equation}

We take \begin{align*}
f(|\xi|^2, \xi)&=|\xi|^{-\frac{n-2}{2}}e^{i(-r_0|\xi|+t_0|\xi|^2)}1_{\{1 \le |\xi|\le 1+R^{-1/2}\}},\\
g(|\eta|^2, \eta)&=|\eta|^{-(n-2)}e^{it_0|\eta|^2}1_{I_M},
\end{align*} where $r_0\in [R/2, R]$ and $t_0\in \R$. By the heuristic approximation for
$(fd\sigma)^{\vee}$, the left-hand side of \eqref{ex:loc-17} is comparable to
\begin{align*}
\left(\int_{R/2}^{R}\int_{\R} \left|\int_{1}^{1+R^{-1/2}}e^{i[(\pm
r-r_0)s_1-(t-t_0)s_1^2]}ds_1\int_{I_M}e^{-i(t-t_0)s_2^2}
(d\mu)^{\vee}{(irs_2e_1)}ds_2\right|^2dt\,dr\right)^{1/2}.
\end{align*}

We choose $r\in [R/2, R]$ and $t\in \R$ such that
$$|(r-r_0)-2(t-t_0)|\le R^{1/2}/100,\, R^{1/2}/100\le t-t_0\le R^{1/2}/50.$$
Then $r$ and $t$ are located in the intersection area of two tubes which has size of $R\times
R^{1/2}$. Hence the left-hand side of \eqref{ex:loc-17} $\gtrsim R^{1/4}M$. On the other hand, its
right-hand side $\lesssim R^{1/4}M$. Thus we see that the estimate $L^2\times L^2\to L^2$ when
$2\le R\le 1/M$ is sharp.
\end{example}

The following example will show that the estimates in the region $V$ in Figure \ref{fig:
bilin-dya} determined by the lines $q=4$, $q=\infty$ and $q=3p'$ are sharp.
\begin{example}[\textbf{V}]\label{ex:V-med}
If $2\le R\le 1/M$, $R^{-(n-2)/2}M^{{(n-1)}/{p'}}$ is best possible in the following inequality
\begin{equation}\label{eq: loc-16}
\|(fd\sigma_1)^{\vee}(gd\sigma_2)^{\vee}\|_{L^{\infty}_{t,x}} \lesssim
R^{-\frac{n-2}{2}}M^{\frac{n-1}{p'}}\|f\|_{L^p(S_1)}\|g\|_{L^p(S_2)},
\end{equation} where $1\le p\le \infty$.

We take
\begin{align*}f(|\xi|^2, \xi)&=|\xi|^{-\frac{n-2}{2}}e^{i(-r_0|\xi|+t_0|\xi|^2)}1_{\{1\le|\xi|\le 2\}},\\
g(|\eta|^2, \eta)&=|\eta|^{-(n-2)}e^{it_0|\eta|^2}1_{I_M},
\end{align*} where $r_0\in [R/2, R]$ and $t_0\in \R$. They are chosen such that
$\|(fd\sigma_1)^{\vee}\|_{L^{\infty}_{t,x}}$ and $\|(gd\sigma_2)^{\vee}\|_{L^{\infty}_{t,x}}$ can
be realized at $(t_0,x_0)$ with $|x_0|=r_0$. By the heuristic approximation \eqref{prop:
heur-four-bess},
\begin{align*}
R^{-\frac{n-2}{2}}\left|\int_{I_1}e^{i(\pm
r_0-r_0)s_1}ds_1\int_{I_M}(d\mu)^{\vee}(rs_2e_1)ds_2\right|.
\end{align*} Then from the triangle inequality, the left-hand side of \eqref{eq: loc-16} $\gtrsim
R^{-(n-2)/{2}}M$. On the other hand, its right-hand side $\lesssim R^{-(n-2)/{2}}M$ for $1\le p\le
\infty$. Thus the estimates on the line $q=\infty$ when $2\le R\le 1/M$ are sharp.

When $q=4,4\le p\le \infty$, or $q=3p', 1\le p<4$, the estimates go back to \eqref{eq: loc-15}. We
choose $r\in [R/2, R]$ and $t\in \R$ such that $2\le r-r_0\le 4$ and $2\le t-t_0\le 4$.
\end{example}

\underline{\textit{Case 3}: $R\le 1$.}

In this subcase, we will construct counterexamples to show the estimates \eqref{ex:loc-22} and
\eqref{ex:loc-21} are sharp. We will omit the computations for simplicity.

The following example shows that the estimates in the region $I$ determined by $L^2\times L^2\to
L^1$, $L^2\times L^2\to L^2$ and $L^1\times L^1\to L^{\infty}$ are sharp.
\begin{example}[\textbf{I}]\label{ex:I-sml}
We take
\begin{align*}
f(|\xi|^2,\xi)=F(|\xi|)&=|\xi|^{-(n-2)}e^{it_0|\xi|^2}1_{\{1\le
|\xi|\le 1+M^2\}}, \\
g(|\eta|^2,\eta)=G(|\eta|)&=|\eta|^{-(n-2)}e^{it_0|\eta|^2}1_{I_M},
\end{align*} where $t_0\in \R$. The $r$ and $t$ are chosen such that $\frac R2\le r\le R$ and
$\frac1{100M^2}\le t-t_0\le \frac 1{50M^2}$.
\end{example}

The next example shows that the estimates in the region $II$ determined by the lines $q=1$ and
$q=2$ are sharp.
\begin{example}[\textbf{II}]\label{ex:II-sml}
We define an index set $J:=\{j: 1\le j\le [M^{-2}]\}$. For each $j\in J$, we set
\begin{equation*} f_j(|\xi|^2,\xi)=F_j(|\xi|)=|\xi|^{-(n-2)}e^{it_0|\xi|^2}1_{\{1+(j-1)M^{2}\le
|\xi|\le 1+jM^{2}\}}.
\end{equation*}
Then we define $$f=\sum_{j}\eps_j f_j, \,\,
g(|\eta|^2,\eta)=|\eta|^{-(n-2)}e^{it_0|\eta|^2}1_{I_M},$$ where $\{\eps_j: j\in J\}$ is a set of
i.i.d. ~random variables taking $\pm1$ with an equal probability $1/2$, and the $r$ and $t$ are
chosen such that $R/2\le r\le R$ and $1/2\le t-t_0\le 1$.
\end{example}
The third example shows that the estimate \eqref{ex:loc-21} is sharp. Hence the estimates in the
regions $III$, $IV$ and $V$ when $R\le 1$ are sharp.
\begin{example}[\textbf{III, IV, V}]\label{ex:III,IV,V-sml}
We take
\begin{align*}
f(|\xi|^2,\xi)=F(|\xi|)&=|\xi|^{-(n-2)}e^{it_0|\xi|^2}1_{I_1}, \\
g(|\eta|^2,\eta)=G(|\eta|)&=|\eta|^{-(n-2)}e^{it_0|\eta|^2}1_{I_M},
\end{align*}
where $t_0\in \R$. The $r$ and $t$ will be are chosen such that $ \frac R2\le r\le R$ and
$\frac12\le t-t_0\le 1$.
\end{example}
Thus the proof of Theorem \ref{thm:dyadic-bilin} is complete.

\section{Connection with the restriction estimates for the sphere or the hypersurface of elliptic
type}\label{sec:other-surf} In this section we are concerned with whether the analogous results of
Theorems \ref{thm:dyadic-lin} and \ref{thm:dyadic-bilin} remain valid if $S$ is replaced with the
lower third of the sphere $S^{n-1}$ or a cylindrically symmetric and compact hypersurface of
elliptic type.

Let us first consider the case where the paraboloid is replaced by the sphere $\S^{n-1}$ in
$\R^{n}$. Suppose $f$ is a cylindrically symmetric function supported on a compact set of
$\S^{n-1}$, $S:=\{(-\sqrt{1-|\xi|^2},\xi)\in \R\times\R^{n-1}: M\le |\xi|\le 2M\}$, where $0< M\le
1/6$. Then
\begin{equation}\label{eq:sphere-1}
(fd\mu)^{\vee}(t,x)=\int_{M\le |\xi|\le 2M} e^{i(x\cdot \xi-t\sqrt{1-|\xi|^2})}F(|\xi|)d\xi,
\end{equation}
where $d\mu$ is the surface measure of the sphere and $F(|\xi|)=f(-\sqrt{1-|\xi|^2}, \xi)$.

Since $f$ is cylindrically symmetric, we see that $(fd\mu)^{\vee}$ is also cylindrically
symmetric. Then if we change \eqref{eq:sphere-1} to the polar coordinates to obtain
\begin{equation}\label{eq:sphere-2}
(fd\mu)^{\vee}(t,r)=\int_{I} F(s)e^{-it\sqrt{1-s^2}}(d\mu)^{\vee}(rse_1) s^{n-2}ds,
\end{equation} where $I=[M,2M]$. By the Taylor expansion of $\sqrt{1-s^2}$ at $s=0$,
\begin{equation}
-\sqrt{1-s^2}=-1+\frac 12 s^2+C_1(s)s^4,
\end{equation} where $C_1(s)\sim 1$ for all $s\in I$. Then from \eqref{eq:sphere-2},
\begin{equation}\label{eq:sphere-3}
|(fd\mu)^{\vee}(2t,r)|=\left|\int_{I} F(s)e^{it(s^2+C_2s^4)}(d\mu)^{\vee}(rse_1) s^{n-2}ds\right|,
\end{equation}
where $C_2(s)\sim 1$ for all $s\in I$. The factor ``2" in $(fd\mu)^{\vee}(2t,r)$ is artificial
since we are going to integrate $t$ in $\R$. We make two key observations similar to those we used
in Theorem \ref{thm:dyadic-lin} and \ref{thm:dyadic-bilin} as follows.
\begin{itemize}
\item Since $0\le M\le 1/6$,
\begin{equation*}\frac {d(s^2+C_2(s)s^4)}{ds}\sim s,\,\frac{d^2(s^2+C_2(s)s^4)}{d^2s}\sim 1.
\end{equation*} Heuristically, this condition means that if we change variables $s^2+C_2(s)s^4\to a$,
 it is almost like changing $s^2\to a$. Hence the analogous result to Proposition \ref{prop:lin-q=3p'}
 will hold for the lower third of the sphere.

\item Form the geometric properties of the sphere,
$$\|d\mu_1*d\mu_2\|_{L^{\infty}_{t,r}}\lesssim O(1),$$
when $d\mu_1$ and $d\mu_2$ are the canonical Lebesgue measure of two arcs of size $O(1)$ supported
on the sphere $\S^1$ but separated by a distance $O(1)$. Hence the analogous result to Proposition
\ref{prop:lin-q=4} will hold.
\end{itemize}

Those observations enable us to run all the arguments in Theorem \ref{thm:dyadic-lin} and
\ref{thm:dyadic-bilin}.

We now turn to the case where $S$ is replaced by a cylindrically symmetric and compact
hypersurface $S$ of elliptic type, i.e., $S$ is of the form
\begin{equation}\label{eq:elliptic set}
S:=\{(\tau, \xi)\in \R\times \R^{n-1}: \tau=|\xi|^2+\eps \phi(\xi) \}
\end{equation}
where the error function $\phi(\xi)$ is radial and smooth, and $\eps$ is a sufficiently small
parameter depending on the smooth norms of $\phi$ and on the size of $S$, or more generally on the
separation of $S_1$ and $S_2$. In other words, $S$ is the small perturbation of the standard
paraboloid. By similar observations we made on the sphere, we can establish the analogous results
to Theorems \ref{thm:dyadic-lin} and \ref{thm:dyadic-bilin} for the cylindrically symmetric
functions compactly supported on $S$ defined in \eqref{eq:elliptic set}.

\section{Connection with Strichartz inequalities of the Schr\"odinger equation}\label{sec:PDE appli}
\subsection{Linear Strichartz estimates}The restriction problem is closely related to that of
estimating solutions to linear PDE such as the wave equation and the Schr\"odinger equation.
Strichartz first observed this connection in \cite{Strichartz:1977}, which initiated the intensive
study on various Strichartz estimates. In this section we will interpret our restriction estimates
regarding $(fd\sigma)^{\vee}$ in terms of the solutions to the Schr\"odinger equations.

Suppose $f(\tau, \xi)$ is a function supported on the paraboloid $S$ in $\R\times\R^{n-1}$.
Functions of the form $u(t,x):=(fd\sigma)^{\vee}$, where $d\sigma$ is the canonical Lebesgue
measure on $S$, can be easily seen to solve the free Schr\"odinger equation
\begin{equation}\label{eq: schrodinger equation}
iu_t+\triangle u=0,\, u(0,x)=u_0(x),
\end{equation}where the spatial Fourier transform $\hat{u}_0(\xi)=f(|\xi|^2, \xi)$.  It is easy to
deduce that $f$ is cylindrically symmetric on $\R\times \R^{n-1}$ if $u_0$ is radial on
$\R^{n-1}$. By this interpretation, the linear estimate $L^p\to L^q$ or the bilinear restriction
estimate $L^p\times L^p\to L^q$ will correspond to certain Strichartz estimates. For instance, the
Tomas-Stein restriction estimate $L^2\to L^{2(n+1)/(n-1)}$ implies the Strichartz estimate
$$\|e^{it\triangle}u_0\|_{L^{\frac{2(n+1)}{n-1}}_{t, x}(\R\times \R^{n-1})}\lesssim
\|u_0\|_{L^2(\R^{n-1})},$$ where we have denoted $u$ by
$e^{it\triangle}u_0(x)=\int_{\R^{n-1}}e^{i(x\xi+t|\xi|^2)}\hat{u}_0(\xi)d\xi$. This is known to be
best possible simply by the scaling property associated to the Schr\"odinger equation. In fact, we
have the following optimal result called the linear Strichartz estimates
\cite{Keel-Tao:1998:endpoint-strichartz},
\begin{equation}\label{eq:lin-stri}
\|e^{it\triangle}u_0\|_{L^q_t L^r_x(\R\times \R^{n-1})}\lesssim \|u_0\|_{L^2(\R^{n-1})}
\end{equation}
if and only if
\begin{equation}\label{eq:lin-restr-range}
\frac{2}{q}+\frac{n-1}{r}=\frac{n-1}{2}, \, q\ge 2,\, r\ge 2, \, (q,r,n)\neq (2,\infty, 3).
\end{equation}

A natural question arises: if we assume that $\hat{u}_0$ is radial and supported on a compact set
$U:=\{\xi\in \R^{n-1}: M\le |\xi|\le 2M\}$ with dyadic $M>0$, do we have further estimates
available? The answer is confirmed in Corollary \ref{cor:lin-restr}. In particular, we have
\begin{corollary} Suppose $u_0$ is defined as above. Then for any $q>\frac{4n-2}{2n-3}$,
\begin{equation}\label{ex:loc-23}
\|e^{it\triangle}u_0\|_{L^{q}_{t, x}(\R\times \R^{n-1})}\lesssim
M^{\frac{n-1}{2}-\frac{n+1}{q}}\|u_0\|_{L^2(\R^{n-1})}.
\end{equation}
\end{corollary}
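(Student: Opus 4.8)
The plan is to recognize \eqref{ex:loc-23} as the $p=2$ case of Corollary \ref{cor:lin-restr}, together with the parabolic scaling of the Schr\"odinger propagator. Set $u(t,x)=e^{it\triangle}u_0(x)=(fd\sigma)^{\vee}(t,x)$ with $f(|\xi|^2,\xi)=\hat u_0(\xi)$; since $u_0$ is radial and $\hat u_0$ is supported on $\{M\le|\xi|\le 2M\}$, the function $f$ is cylindrically symmetric and lies in $\L_M$, so \eqref{ex:loc-23} is exactly the $L^2\to L^q$ estimate of the first assertion of Corollary \ref{cor:lin-restr}. First I would check that $q>\frac{4n-2}{2n-3}$ is the right hypothesis: with $p=2$ the three conditions $q>\frac{2n}{n-1}$, $\frac1p+\frac1q\le 1$, $\frac1p+\frac{2n-1}{q}<n-1$ reduce respectively to $q>\frac{2n}{n-1}$, $q\ge 2$, and $q>\frac{4n-2}{2n-3}$; cross-multiplying gives $(4n-2)(n-1)-2n(2n-3)=2>0$, so $\frac{4n-2}{2n-3}>\frac{2n}{n-1}$, and since $\frac{4n-2}{2n-3}\ge\frac{10}{3}>2$ for $n\ge 3$ the single condition $q>\frac{4n-2}{2n-3}$ already implies all three.

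It remains to extract the power of $M$, which Corollary \ref{cor:lin-restr} records only as an implicit constant. For this I would rescale to $M=1$: writing $\hat v_0(\eta)=\hat u_0(M\eta)$, the function $v_0$ is radial with $\hat v_0$ supported on $\{1\le|\eta|\le 2\}$, so $v_0$ corresponds to an element of $\L_1$, and $\|u_0\|_{L^2(\R^{n-1})}\sim M^{(n-1)/2}\|v_0\|_{L^2(\R^{n-1})}$. Substituting $\xi=M\eta$ in the Fourier integral gives the identity $e^{it\triangle}u_0(x)=M^{n-1}\bigl(e^{i(tM^2)\triangle}v_0\bigr)(Mx)$, and then the change of variables $s=tM^2$, $y=Mx$ in the space-time integral yields
\begin{equation*}
\|e^{it\triangle}u_0\|_{L^q_{t,x}(\R\times\R^{n-1})}=M^{(n-1)-\frac{n+1}{q}}\,\|e^{is\triangle}v_0\|_{L^q_{s,y}(\R\times\R^{n-1})}.
\end{equation*}

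Finally, by the $M=1$ case of Corollary \ref{cor:lin-restr} (equivalently, by interpolating the estimates of Theorem \ref{thm:dyadic-lin}, summing over dyadic $R$, and invoking Schur's test, exactly as in the proof of that corollary) one has $\|e^{is\triangle}v_0\|_{L^q_{s,y}}\lesssim\|v_0\|_{L^2(\R^{n-1})}$ whenever $q>\frac{4n-2}{2n-3}$. Inserting $\|v_0\|_{L^2}\sim M^{-(n-1)/2}\|u_0\|_{L^2}$ into the displayed identity produces the exponent $(n-1)-\frac{n+1}{q}-\frac{n-1}{2}=\frac{n-1}{2}-\frac{n+1}{q}$, which is \eqref{ex:loc-23}. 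There is no genuine obstacle here: all of the harmonic-analytic content is already contained in Theorem \ref{thm:dyadic-lin}, and the only work is the scaling bookkeeping and the elementary check that $q>\frac{4n-2}{2n-3}$ is precisely the admissible range coming from Corollary \ref{cor:lin-restr} with $p=2$. One may also observe that $\frac{4n-2}{2n-3}<\frac{2(n+1)}{n-1}$ for $n\ge 3$, so \eqref{ex:loc-23} genuinely enlarges the diagonal endpoint $q=\frac{2(n+1)}{n-1}$ of the classical Strichartz estimate \eqref{eq:lin-stri} in the radial, frequency-localized setting.
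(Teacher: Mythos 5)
Your proposal is correct and is essentially the argument the paper intends: the corollary is exactly the $p=2$ case of Corollary \ref{cor:lin-restr} applied to $f(|\xi|^2,\xi)=\hat u_0(\xi)\in\L_M$, with the power of $M$ extracted by the same parabolic rescaling to $\L_1$ that the paper itself uses in proving Corollary \ref{cor:lin-restr}. Your checks that $q>\frac{4n-2}{2n-3}$ subsumes the other two exponent conditions at $p=2$, and the scaling identity yielding the exponent $\frac{n-1}{2}-\frac{n+1}{q}$, are both accurate.
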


\begin{remark}
For such functions, one can easily extend the current range \eqref{eq:lin-restr-range} for the
linear mixed norm Strichartz estimates \eqref{eq:lin-stri} by interpolating them with the estimate
\eqref{ex:loc-23}.
\end{remark}

In another direction, one can also obtain various weighted Strichartz estimates. This type of
estimates for radial data has proven very useful in establishing the global well-posedness and
scattering results for certain Schr\"odinger equations, see e.g.,
\cite{Tao-Visan-Zhang:2007:radial-NLS-higher}. In \cite{Vilela:2001:radial-schrod}, Vilela showed
that, assuming $u_0\in L^2(\R^{n-1})$ to be radial,
\begin{equation}\label{eq:vilela}
\|D^s_x e^{it\triangle}u_0\|_{L^2_{t,x}(|x|^{-\alpha})}\lesssim \|u_0\|_{L^2},
\end{equation} if and only if $\alpha=2(1-s)$, $1<\alpha<n-1$ and $n\ge
3$, where $D^s_x f$ is defined via the spatial Fourier transform by $\widehat{D^s_x
f}(\xi)=|\xi|^s\hat{f}(\xi)$. The ``only if'' part is given in \cite{Vilela:2001:radial-schrod} by
the decay estimate of $(d\sigma)^{\vee}$ and scaling. Here we will give another proof of the
``if'' part by using the linear dyadic restriction restriction estimates given by Theorem
\ref{thm:dyadic-lin}.
\begin{proof}
We first assume that $u_0$ has dyadically localized frequency, i.e.,  $\hat{u}_0$ supported on the
set $\{\xi: M/2\le |\xi|\le M\}$ with dyadic $M$. Then we set $f(|\xi|^2, \xi)=\hat u_0(M\xi)$,
i.e., $f\in \L_1$. Then from the estimate $L^2\to L^2$ in Theorem \ref{thm:dyadic-lin}, we obtain,
$\forall \, \eps>0$,
$$\||x|^{-(1+\eps)/2}(fd\sigma)^{\vee}\|_{L^2{(\R\times \{|x|\ge 1\})}}\lesssim_{\eps}
\|f\|_{L^2(\R^{n-1})}.$$ If we restrict $0<\eps<n-2$, by the Plancherel theorem in $t$,
$$\||x|^{-(1+\eps)/2}(fd\sigma)^{\vee}\|_{L^2{(\R\times \{|x|\le 1\})}}
\lesssim_{\eps} \|f\|_{L^2(\R^{n-1})}.$$ Hence
$$\||x|^{-(1+\eps)/2}(fd\sigma)^{\vee}\|_{L^2{(\R\times \R^{n-1})}}\lesssim_{\eps}
\|f\|_{L^2(\R^{n-1})}.$$ By re-scaling by $M$,
$$\||x|^{-(1+\eps)/2}M^{(1-\eps)/2}e^{it\triangle}u_0\|_{L^2{(\R\times \R^{n-1})}}\lesssim_{\eps}
\|u_0\|_{L^2(\R^{n-1})}.$$ By the weighted H\"ormander-Mikhlin theorem \cite[Lemma
2.2]{Tao-Visan-Zhang:2007:radial-NLS-higher},
$$\|D^{(1-\eps)/2}e^{it\triangle} u_0\|_{L^2(|x|^{-(1+\eps)})}\lesssim_{\eps} \|u_0\|_{L^2(\R^{n-1})}.$$
Setting $s=(1-\eps)/2$ and $\alpha=1+\eps$, we obtain \eqref{eq:vilela} for frequency localized
$u_0$. Then we follow the approach of using the Khintchine inequality to prove the
Littlewood-Paley inequality and use the weighted inequalities for singular integrals\cite[Chapter
5, Corrollary 4.2]{Stein:1993} ($|x|^{-\alpha}$ is a $A_2$ weight) to obtain \eqref{eq:vilela}.
\end{proof}

\subsection{Bilinear Strichartz estimates}
Form the linear strichartz estimates \eqref{eq:lin-stri}, we see their bilinear analogues,
\begin{equation}\label{eq:bilin-stri}
\|e^{it\triangle}u_0e^{it\triangle}v_0\|_{L^q_t L^r_x(\R\times \R^{n-1})}\lesssim
\|u_0\|_{L^2(\R^{n-1})}\|v_0\|_{L^2(\R^{n-1})}
\end{equation} if and only if
\begin{equation}\label{eq:bilin-stri-r}
 \frac{2}{q}+\frac{n-1}{r}=n-1;\, q, r\ge 1;\, (q,r,n)\neq(1,\infty, 3).
\end{equation} For the necessity of excluding the endpoint $(1,\infty, 3)$, see
\cite{Tao:2006:counterexample-bilinear-strichartz}.

The estimate \eqref{eq:bilin-stri} becomes more interesting when we assume $u_0$ and $v_0$ are
compactly supported and separated by a distance comparable to $O(1)$. In this case, we expect that
there are more estimates available. For instance, when $q=r$, Klainerman and Machedon
\cite{Klainerman-machedon:1993:spa-time-null-form} conjectured that \eqref{eq:bilin-stri} holds if
and only if $q=r\ge(n+2)/{n}$. The exponent $(n+2)/n$ is best possible, see e.g.,
\cite{Tao-Vargas-Vega:1998:bilinear-restri-kakeya}, \cite{Tao:2003:paraboloid-restri}. This
conjecture has been verified by Tao in \cite{Tao:2003:paraboloid-restri} up to the endpoint
$(n+2)/n$. The analogous results in the cone setting were established by Wolff in the non-endpoint
case \cite{Wolff:2001:restric-cone} and Tao in the endpoint case \cite{Tao:2001:endpoint-cone}.

As shown in Corollary \ref{cor:bilin-restr}, we have further estimates available if we assume that
$\widehat{u}_0$ and $\widehat{v}_0$ are radial functions and compactly supported on $U_1=\{\xi\in
\R^{n-1}: M_1/2\le |\xi|\le M_1\}$ and $U_2=\{(\xi\in \R^{n-1}: M_2/2\le |\xi|\le M_2\}$,
respectively. Here $M_1>0$, $M_2>0$ are dyadic numbers satisfying $M_2\le M_1/4$. For instance, as
a corollary of Theorem \ref{thm:dyadic-bilin}, we have the following bilinear Strichartz estimates
by interpolation and summing in dyadic $R$.
\begin{corollary}
Suppose $u_0, v_0$ are defined as above. Then
\begin{itemize}
\item for $\frac{n}{n-1}< q\le 2$,
$$\| e^{it\triangle}u_0e^{it\triangle}v_0\|_{L^q_{t,x}(\R\times\R^{n-1})}\lesssim
M_1^{-\frac12}M_2^{\frac{2n-1}{2}-\frac{n+1}{q}}\|u_0\|_{L^2(\R^{n-1})}\|v_0\|_{L^2(\R^{n-1})}.$$

\item for $2\le q\le \frac{2(2n-1)}{2n-3}$,
$$\| e^{it\triangle}u_0e^{it\triangle}v_0\|_{L^q_{t,x}(\R\times\R^{n-1})}\lesssim
M_1^{-\frac{3}{2q}+\frac{1}{4}}M_2^{\frac{4n-5}{4}-\frac{2n-1}{2q}}\|u_0\|_{L^2(\R^{n-1})}
\|v_0\|_{L^2(\R^{n-1})}.$$

\item for $q\ge \frac{2(2n-1)}{2n-3}$,
$$\|e^{it\triangle}u_0e^{it\triangle}v_0\|_{L^q_{t,x}(\R\times\R^{n-1})}\lesssim
M_1^{\frac{n-1}{2}-\frac{n+1}{q}}M_2^{\frac{n-1}{2}}\|u_0\|_{L^2(\R^{n-1})}\|v_0\|_{L^2(\R^{n-1})}.$$
\end{itemize}
\end{corollary}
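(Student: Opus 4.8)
The plan is to obtain all three estimates as consequences of Theorem~\ref{thm:dyadic-bilin}, along the route the text indicates: rescale to $\L_1\times\L_M$, interpolate the dyadic estimates in $q$ with $p=2$ fixed, and sum the results over the dyadic annuli $A_R$.

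First I would record the scaling reduction. Put $f(|\xi|^2,\xi)=\widehat{u}_0(\xi)$ and $g(|\eta|^2,\eta)=\widehat{v}_0(\eta)$, so that $e^{it\triangle}u_0\,e^{it\triangle}v_0=(fd\sigma_1)^{\vee}(gd\sigma_2)^{\vee}$. With $\lambda=M_1/2$ and $f_\lambda(\tau,\xi)=f(\lambda^2\tau,\lambda\xi)$, $g_\lambda(\tau,\eta)=g(\lambda^2\tau,\lambda\eta)$, one has $f_\lambda\in\L_1$ and $g_\lambda\in\L_M$ with $M:=M_2/M_1\le 1/4$ (this is where the hypothesis $M_2\le M_1/4$ enters); the parabolic scaling identity expresses $(f_\lambda d\sigma_1)^{\vee}(g_\lambda d\sigma_2)^{\vee}$ as a rescaled copy of $e^{it\triangle}u_0\,e^{it\triangle}v_0$, and Plancherel gives $\|f_\lambda\|_{L^2(S_1)}=\lambda^{-(n-1)/2}\|u_0\|_{L^2}$, $\|g_\lambda\|_{L^2(S_2)}=\lambda^{-(n-1)/2}\|v_0\|_{L^2}$. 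Tracking powers of $\lambda\sim M_1$ through a change of variables in the space--time norm, these combine into
\begin{equation*}
\|e^{it\triangle}u_0\,e^{it\triangle}v_0\|_{L^q_{t,x}(\R\times\R^{n-1})}\lesssim M_1^{(n-1)-\frac{n+1}{q}}\,N_q(M)\,\|u_0\|_{L^2}\|v_0\|_{L^2},
\end{equation*}
where $N_q(M)$ is the best constant in $\|(fd\sigma_1)^{\vee}(gd\sigma_2)^{\vee}\|_{L^q_{t,x}(\R\times\R^{n-1})}\le N_q(M)\|f\|_{L^2(S_1)}\|g\|_{L^2(S_2)}$ for $f\in\L_1$, $g\in\L_M$. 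So it suffices to show $N_q(M)\lesssim M^{\frac{2n-1}{2}-\frac{n+1}{q}}$ for $\frac{n}{n-1}<q\le 2$, $N_q(M)\lesssim M^{\frac{4n-5}{4}-\frac{2n-1}{2q}}$ for $2\le q\le\frac{2(2n-1)}{2n-3}$, and $N_q(M)\lesssim M^{\frac{n-1}{2}}$ for $q\ge\frac{2(2n-1)}{2n-3}$; substituting $M=M_2/M_1$ and simplifying the powers of $M_1$ then recovers the three displayed inequalities (and one checks the bounds agree at the breakpoints $q=2$ and $q=\frac{2(2n-1)}{2n-3}$).

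To control $N_q(M)$ I would use $\|\Phi\|^q_{L^q_{t,x}(\R\times\R^{n-1})}=\sum_{R\ \mathrm{dyadic}}\|\Phi\|^q_{L^q_{t,x}(\R\times A_R)}$ and bound each slab by Theorem~\ref{thm:dyadic-bilin}. For $p=2$ the relevant vertices there are $q=1$, $q=2$, the point $q=3p'=6$ on the $q=3p'$ line, and $q=\infty$, and all of them are free of the $R^{\eps}$ loss (that loss only appears at the $q=4$, $p=4$ vertex, which is never used); interpolating between consecutive vertices produces, on each slab, a bound $R^{\alpha(R)}M^{\beta(R)}\|f\|_{L^2}\|g\|_{L^2}$ with $\alpha,\beta$ step functions of $R$ corresponding to the three regimes $R\ge 1/M$, $2\le R\le 1/M$, $R\le 1$. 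Summing the (essentially geometric) series in $R$ within each regime yields $N_q(M)$: the $R\ge 1/M$ part concentrates at $R\sim 1/M$ and, after raising to the $q$-th power and summing, carries an $R$-exponent $n-(n-1)q$, so that portion converges exactly when $q>\frac{n}{n-1}$; the $R\le 1$ part concentrates at $R\sim 1$. Comparing the two surviving powers of $M$ (since $M\le 1$, the one with the smaller exponent dominates the sum), the dominant regime switches at $q=2$ and again at $q=\frac{2(2n-1)}{2n-3}$, which is precisely what splits the conclusion into three cases; for $q<2$ the clean small-annulus estimate \eqref{ex:loc-21} applies only for $q\ge 2$, so there one instead interpolates the three $R$-regime estimates of Proposition~\ref{prop:sml-bilin-err} (in particular \eqref{ex:loc-22}) with the corresponding $q=2$ ones.

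The hard part is not analytic — Theorem~\ref{thm:dyadic-bilin} already supplies the oscillatory-integral estimates — but the bookkeeping: getting the scaling powers of $M_1$ exactly right, interpolating the step-function exponents correctly between the $p=2$ vertices (and handling the $q<2$ range by regimes, since \eqref{ex:loc-21} is stated only for $q\ge\max\{2,p'\}$), and, for each $q$, identifying which of the three $R$-regimes dominates the $\ell^q_R$ sum. It is this last point that forces the two breakpoints and requires one simultaneously to verify convergence of the $R\ge 1/M$ series (the constraint $q>\frac{n}{n-1}$, equivalently $\tfrac2p+\tfrac nq<n$ at $p=2$) and to confirm that the surviving power of $M$ is the claimed one.
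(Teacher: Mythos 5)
Your proposal is correct and follows exactly the route the paper itself indicates (the corollary is stated as following from Theorem \ref{thm:dyadic-bilin} ``by interpolation and summing in dyadic $R$''): rescale by $M_1$ to reduce to $f\in\L_1$, $g\in\L_M$ with $M=M_2/M_1$, interpolate the $p=2$ vertices of the dyadic bilinear estimates, and sum the three $R$-regimes, with the breakpoints $q=2$ and $q=\tfrac{2(2n-1)}{2n-3}$ arising from which regime dominates. Your scaling exponent $M_1^{(n-1)-\frac{n+1}{q}}N_q(M)$ and the target bounds on $N_q(M)$ check out against all three displayed inequalities and agree at the breakpoints, so this is a faithful (and more detailed) version of the paper's argument.
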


\begin{remark}
It is clear that, $\forall \, q>\frac{n}{n-1}$, $n\ge 3$ and $M_1, M_2\sim 1$,
$$\| e^{it\triangle}u_0e^{it\triangle}v_0\|_{L^q_{t,x}(\R\times\R^{n-1})}\lesssim
\|u_0\|_{L^2(\R^{n-1})}\|v_0\|_{L^2(\R^{n-1})},$$ which improves $q>\frac{n+2}{n}$.
\end{remark}

\begin{remark}
When $q=2$ and $n\ge 3$, we have the following sharp estimates for $u_0, v_0$ defined as above,
\begin{equation*}
\|e^{it\triangle}u_0e^{it\triangle}v_0\|_{L^2_{t,x}(\R\times\R^{n-1})}\lesssim
M_1^{-\frac12}M_2^{\frac{n-2}{2}}\|u_0\|_{L^2(\R^{n-1})}\|v_0\|_{L^2(\R^{n-1})},
\end{equation*} which generalizes Bourgain's following estimates to all dimensions
\begin{align*}
\|e^{it\triangle}u_0e^{it\triangle}v_0\|_{L^2_{t,x}(\R\times\R^2)}\lesssim
M_1^{-\frac12}M_2^{\frac 12}\|u_0\|_{L^2(\R^{2})}\|v_0\|_{L^2(\R^{2})},\\
\|e^{it\triangle}u_0e^{it\triangle}v_0\|_{L^2_{t,x}(\R\times\R^3)}\lesssim
M_1^{-\frac12}M_2\|u_0\|_{L^2(\R^{3})}\|v_0\|_{L^2(\R^{3})}.\end{align*} But we remark that
Bourgain's estimates are for general $u_0$ and $v_0$ without the radial assumption, see
\cite{Bourgain:1993:Schrodinger-lattice}, \cite{Bourgain:1999:colloqium-book}.
\end{remark}

\bibliography{refs}
\bibliographystyle{plain}
\end{document}